\newtheorem{theorem}{Theorem}[section]
\newtheorem{problem}[theorem]{Problem}
\newtheorem{definition}[theorem]{Definition}
\newtheorem{corollary}[theorem]{Corollary}
\newtheorem{proposition}[theorem]{Proposition}
\newtheorem{lemma}[theorem]{Lemma}
\newtheorem{conjecture}[theorem]{Conjecture}
\def\polhk#1{\setbox0=\hbox{#1}{\ooalign{\hidewidth
			\lower1.5ex\hbox{`}\hidewidth\crcr\unhbox0}}}
\date{}
\title{Concepts of signed graph coloring}
\author{Eckhard Steffen\thanks{Paderborn Center for Advanced Studies and Institute for Mathematics, Paderborn University, 
		Paderborn, 33102, Germany. Email:~es@upb.de} and Alexander Vogel \thanks{Stadionstraße 56, 70771 Leinfelden-Echterdingen, Germany, Email:~misteralexvogel@web.de} }
\begin{document}
	
	\maketitle

\begin{abstract}
This paper surveys recent development of concepts related to coloring of signed graphs. Various approaches are presented and discussed. 
\end{abstract}

\maketitle

\section{Introduction and definitions}

The majority of concepts of signed graph coloring are natural extensions and generalizations of
vertex coloring and the chromatic number of graphs. However, it turns out that there are coloring concepts 
which are equivalent for graphs but they are not equivalent for signed graphs in general. 
Consequently, there are several versions of coloring and a
corresponding chromatic number of a signed graph. 
In this paper, we give a brief overview of 
various concepts and relate some of them to each other. We will use standard terminology of graph theory
and only give some necessary definitions, some of which are less standard.  

We consider finite graphs $G$ with vertex set $V(G)$ and edge set $E(G)$.
An edge $e$ with end vertices $v$ and $w$ is also denoted by $vw$. 
If  $v=w$, then $e$ is called a loop.
The degree of $v$ in $G$, denoted by $d_G(v)$, is the number of edges incident with $v$, a loop is
counting as two edges. 
The maximum degree of $G$, denoted by $\Delta(G)$, is $\max \{d_G(v) : v \in V(G)\}$, and 
$\min \{d_G(v) : v \in V(G)\}$ is the minimum degree of $G$, 
which is denoted by $\delta(G)$. 

A graph $G$ is $k$-regular, if $d_G(v)=k$ for all $v \in V(G)$.
Let $X \subseteq V(G)$  be a set of vertices. The subgraph of $G$ induced by $X$ is denoted by $G[X]$, and the set of edges with precisely one end in $X$ is denoted by $\partial_G(X)$. A circuit is a connected 2-regular graph. For $k \geq 1$,
a circuit of length $k$ is denoted by $C_k$, where $C_1$ is a loop, and $C_2$ consists of two vertices and
two edges between them.

Let $G$ be a graph and $C$ be a set. A mapping $c: V(G) \rightarrow C$ is a coloring 
of $G$. 
If $c(u) \not = c(v)$ for all $uv \in E(G)$, then $c$ is a proper coloring of $G$.
Furthermore, if $|\{c(v): v \in V(G)\}| \leq k$, then $c$ is a $k$-coloring of $G$.  The chromatic number of $G$ is the minimum number $k$ for which there is a proper $k$-coloring of $G$. In this paper we 
study proper colorings only. Thus we will skip the term ``proper'' in the following. 

A signed graph $(G,\sigma)$ is a graph $G$ together with a function 
$\sigma : E(G) \rightarrow \{ \pm 1 \}$.
The function $\sigma$ is called a signature of $G$ and $\sigma(e)$ is called the 
sign of $e$. An edge $e$ is negative if $\sigma(e) = -1$ and
it is positive otherwise. 
The set of negative edges is denoted by $N_{\sigma}$, and 
$E(G) - N_{\sigma}$ is the set of positive edges, which is also denoted by $E^+_{\sigma}(G)$.
The graph $G$ is sometimes called the underlying graph of the signed graph $(G,\sigma)$.  

Let $(G',\sigma|_{E(G')})$ be a subgraph of $(G,\sigma)$. 
The sign of $(G',\sigma|_{E(G')})$ is the product of the signs
of its edges. A circuit is positive if its sign is $+1$ and negative otherwise. 
Subgraph $(G',\sigma|_{E(G')})$ is balanced if all circuits in $(G',\sigma|_{E(G')})$ are positive, otherwise it is unbalanced. Furthermore, 
negative (positive) circuits are also often 
called unbalanced (balanced) circuits. 
If $\sigma(e)=1$ for all $e \in E(G)$, then $\sigma$ is the all positive signature and 
it is denoted by $\texttt{\bf 1}$,
and if $\sigma(e)=-1$ for all $e \in E(G)$, then $\sigma$ is the all negative signature and it is denoted by $\texttt{\bf -1}$.
 
\begin{theorem} [\cite{Balance}] \label{character_balanced}
A signed graph $(G,\sigma)$ is balanced if and only if $V(G)$ can be partitioned into two sets $A$ and $B$ (possibly empty)
such that all edges of $E(G[A]) \cup E(G[B])$ are positive and all edges of $\partial_G(A)$ are negative.  
\end{theorem}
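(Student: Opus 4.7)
The plan is to prove both directions separately, treating the ``if'' direction as a straightforward parity count and the ``only if'' direction via a potential-function argument rooted at a fixed vertex.

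For the ``if'' direction, suppose such a partition $(A,B)$ exists. Let $C$ be any circuit of $(G,\sigma)$. Walking around $C$, the sequence of vertices alternates in a well-defined way between $A$ and $B$, so $C$ contains an even number of edges of $\partial(A)$. Since only these edges are negative, the product of edge signs along $C$ is $+1$, so $C$ is positive and $(G,\sigma)$ is balanced.

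For the ``only if'' direction, I would assume without loss of generality that $G$ is connected (otherwise argue component by component). Fix a vertex $v_0 \in V(G)$ and, for each vertex $v$, assign the sign $s(v) := \sigma(P)$ where $P$ is any path from $v_0$ to $v$ and $\sigma(P)$ denotes the product of signs of its edges. The key step is to show that $s(v)$ does not depend on the choice of $P$. Given two paths $P_1,P_2$ from $v_0$ to $v$, their concatenation $P_1\bar{P}_2$ is a closed walk at $v_0$, and I will show that every closed walk in a balanced signed graph is positive: each edge traversed an even number of times contributes $+1$, so the walk's sign equals the sign of the edge-subset traversed an odd number of times, which forms an even subgraph and hence decomposes into edge-disjoint circuits, all positive by balance. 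Therefore $\sigma(P_1)\sigma(P_2)=+1$, so $s(v)$ is well-defined. Now set $A := \{v : s(v) = +1\}$ and $B := \{v : s(v) = -1\}$.

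It remains to verify the edge conditions. For an edge $uv \in E(G)$, let $P_u$ and $P_v$ be paths from $v_0$ to $u$ and $v$ respectively; the closed walk $P_u \cdot (uv) \cdot \bar{P}_v$ is positive by the observation above, so $s(u)\cdot\sigma(uv)\cdot s(v) = +1$, giving $\sigma(uv) = s(u)s(v)$. Thus if $u,v$ lie in the same part we obtain $\sigma(uv)=+1$, and if they lie in different parts we obtain $\sigma(uv)=-1$, as required. The main obstacle is the well-definedness lemma for $s$, and the only subtle point there is the passage from an arbitrary closed walk to a disjoint union of circuits; once that is carried out, the rest of the argument is a direct calculation.
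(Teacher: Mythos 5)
Your proof is correct, and it is the standard argument for Harary's balance theorem: the paper itself states this result without proof (citing the original reference), so there is no in-paper proof to compare against. Both directions are sound as written --- the parity count over $\partial(A)$-crossings handles the ``if'' direction, and in the ``only if'' direction you correctly identify and close the one genuinely subtle step, namely that the edges traversed an odd number of times by a closed walk form an even subgraph and hence decompose into edge-disjoint circuits, which is what upgrades ``all circuits positive'' to ``all closed walks positive'' and makes $s(v)$ well-defined.
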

  
A switching of a signed graph $(G,\sigma)$ at a set of vertices $X$ defines a signed graph $(G,\sigma')$ which is obtained from $(G,\sigma)$ by reversing the sign of each
edge of $\partial_G(X)$; i.e.~$\sigma'(e) = - \sigma(e)$ if $e \in \partial_G(X)$ and
$\sigma'(e) = \sigma(e)$ otherwise. If $X= \{v\}$, then we also say that 
$(G,\sigma')$ is obtained from $(G,\sigma)$ by switching at $v$.
Switching defines an equivalence relation on the set of all signed graphs on $G$. 
We say that $(G,\sigma_1)$ and $(G,\sigma_2)$ are equivalent, if they can be obtained from each other by a switching at a vertex set $X$.  
We also say that $\sigma_1$ and $\sigma_2$ are equivalent signatures of $G$, 
which is denoted by $\sigma_1 \sim \sigma_2$. 
Note that also other terms like ``resigning'' or ``re-signing'' are present 
in the literature instead of ``switching''.

\begin{theorem} [\cite{SignedGraphs}] \label{equ_classes}
Two signed graphs $(G,\sigma)$ and $(G,\sigma')$ are equivalent if and only if they have the same set of negative circuits. 
\end{theorem}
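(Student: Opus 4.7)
For the forward direction, it suffices to show that a single resigning preserves the sign of every circuit, since equivalence is generated by such moves. Let $C$ be a circuit of $G$ and consider a resigning at a vertex $v$. If $v \notin V(C)$, then no edge of $C$ is incident to $v$ and all signs on $C$ are unchanged. If $v \in V(C)$ and $C$ is not a loop at $v$, then by 2-regularity of a circuit exactly two edges of $C$ are incident to $v$, each of whose signs gets flipped; the product is unchanged. Finally, if $C$ is a loop at $v$, its sign is unchanged by the definition of resigning. Hence a resigning preserves balance of every circuit, and iterating gives that equivalent signed graphs have identical sets of negative circuits.

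For the backward direction, assume $(G,\sigma)$ and $(G,\sigma')$ have the same negative circuits. Define $\mu : E(G) \to \{\pm 1\}$ by $\mu(e) = \sigma(e)\sigma'(e)$. For any circuit $C$, the product of $\mu$ over $E(C)$ equals the product of $\sigma$ over $E(C)$ times the product of $\sigma'$ over $E(C)$; by hypothesis these two products coincide, so the product of $\mu$ is $+1$. Thus $(G,\mu)$ is balanced. Apply Theorem~\ref{character_balanced} to obtain a partition $V(G) = A \cup B$ such that $\mu$ is $+1$ on every edge of $E(G[A]) \cup E(G[B])$ (i.e.\ $\sigma$ and $\sigma'$ agree there) and $\mu$ is $-1$ on every edge of $\partial(A)$ (i.e.\ $\sigma$ and $\sigma'$ disagree there). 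Note that any loop at a vertex $v$ lies in $E(G[A]) \cup E(G[B])$, so $\sigma$ and $\sigma'$ must agree on every loop (consistent with the fact that a loop is a circuit and balance forbids a negative loop of $\mu$).

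Now obtain a sequence of resignings transforming $\sigma$ into $\sigma'$ by resigning at each vertex of $A$ (in any order). Each edge in $G[A]$ has both endpoints resigned, so its sign is flipped twice and unchanged; each edge in $G[B]$ has neither endpoint resigned and is unchanged; each edge of $\partial(A)$ has exactly one endpoint resigned and has its sign flipped. The resulting signature therefore agrees with $\sigma$ on $E(G[A]) \cup E(G[B])$ and equals $-\sigma$ on $\partial(A)$, which by the partition property coincides with $\sigma'$ on every edge. Hence $\sigma \sim \sigma'$.

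The only delicate point is verifying that the product signature $\mu$ is really balanced and that Theorem~\ref{character_balanced} applies; loops require a small sanity check, which is handled by observing that balance of $\mu$ rules out any negative loop and loops automatically belong to $E(G[A]) \cup E(G[B])$. Apart from this, the argument is essentially a book-keeping exercise using the vertex partition supplied by Theorem~\ref{character_balanced}.
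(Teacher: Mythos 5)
Your proof is correct. The paper itself gives no proof of this theorem (it is a survey and cites Zaslavsky), but your argument is precisely the standard one assembled from the paper's own toolkit: the forward direction is the routine check that resigning preserves circuit signs (including the loop and $C_2$ cases, which you handle correctly), and the backward direction combines the product signature $\mu=\sigma\sigma'$, the balance characterization of Theorem~\ref{character_balanced}, and the paper's observation that resigning at a vertex set $A$ flips exactly the edges of $\partial(A)$; your loop sanity check is also sound, since a loop is a circuit $C_1$ and hence $\mu$ must be positive on it.
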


Hence, it follows with Theorem \ref{character_balanced}:

\begin{corollary}
A signed graph $(G,\sigma)$ is balanced if and only if it is equivalent to $(G,\texttt{\bf 1})$. 
\end{corollary}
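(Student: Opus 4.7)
The plan is to prove the two directions separately, exploiting the two theorems immediately preceding the corollary together with the remark that resigning at every vertex of $A \subseteq V(G)$ flips exactly the edges of $\partial(A)$.

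For the forward direction, assume $(G,\sigma)$ is balanced. I would invoke Theorem \ref{character_balanced} to obtain a partition $V(G) = A \cup B$ such that every edge inside $G[A]$ or $G[B]$ is positive and every edge of $\partial(A)$ is negative. Then I would apply the resigning operation simultaneously at each vertex of $A$. By the observation quoted just before the corollary, this flips precisely the signs on $\partial(A)$ while leaving the signs of all other edges unchanged. Hence the negative edges of $\partial(A)$ become positive, the already-positive edges inside $G[A]$ and $G[B]$ stay positive, and we arrive at the all-positive signature. This exhibits $(G,\sigma) \sim (G,\texttt{\bf 1})$.

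For the backward direction, assume $(G,\sigma) \sim (G,\texttt{\bf 1})$. By Theorem \ref{equ_classes}, equivalent signed graphs have the same set of negative circuits. Since $(G,\texttt{\bf 1})$ has every edge positive, every circuit in it is positive, so its set of negative circuits is empty. Therefore the same is true for $(G,\sigma)$, which is exactly the definition of balanced.

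There is no real obstacle here; the corollary is essentially an immediate consequence of the two quoted theorems plus the resigning remark. The only subtlety is to make sure the resigning on the set $A$ is described as a composition of single-vertex resignings (so that it is a legitimate equivalence) and to note that loops — which are unaffected by resigning — are automatically positive in the balanced case because a negative loop would itself be a negative circuit, contradicting balance. I would mention this briefly to keep the argument airtight.
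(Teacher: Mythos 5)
Your proposal is correct and follows essentially the same route as the paper, which proves the corollary by combining the remark that resigning at all vertices of $A$ flips exactly the edges of $\partial(A)$ with Theorem~\ref{character_balanced}; your forward direction is precisely this argument, and your attention to single-vertex composition and to loops (which must be positive in a balanced graph, since a loop is a circuit $C_1$) is a sound, if minor, refinement. Your converse via Theorem~\ref{equ_classes} (equivalent signatures have the same negative circuits, and $(G,\texttt{\bf 1})$ has none) is an equally immediate variant of the paper's implicit converse and requires no further comment.
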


We define a signed graph $(G,\sigma)$ to be antibalanced if it is equivalent to $(G,\texttt{\bf -1})$.
Clearly, $(G,\sigma)$ is antibalanced if and only if the sign product of every even circuit is 1 and it is -1 for every odd circuit.   
Note, that a balanced bipartite graph is also antibalanced.

Zaslavsky proposed in \cite{SignedGraphs} that there is a canonical form to any given switching class on a graph $G$ 
with respect to a maximal forest on $G$:

\begin{proposition} [\cite{SignedGraphs}]
Let $G$ be a graph and $T$ a maximal forest. Each equivalence class of the set of signed graphs on $G$ has a unique representative
whose edges are positive on $T$.
\end{proposition}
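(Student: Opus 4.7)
The plan is to prove existence and uniqueness separately, using the algorithmic structure of resigning for existence and Theorem \ref{equ_classes} for uniqueness.

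For \emph{existence}, I would argue constructively. Pick a root in each component of $T$ and process the non-root vertices in the order of a BFS/DFS traversal from the root. For each vertex $v$ visited, let $e_v$ be the unique edge in $T$ that joins $v$ to its parent. If $\sigma(e_v) = -1$, resign at $v$; otherwise do nothing. After the resigning, $e_v$ becomes positive, and the only other tree edges whose sign changes are those joining $v$ to its children, which have not yet been processed. Once all non-root vertices have been processed, every edge of $T$ is positive, and the resulting signature $\sigma'$ is by construction equivalent to $\sigma$. (Equivalently, one can observe that the equivalence class of $\sigma$ is obtained by multiplying by $\varepsilon_u \varepsilon_v$ on each edge $uv$, for some choice of signs $\varepsilon : V(G) \to \{\pm 1\}$; fixing $\varepsilon$ to be $+1$ at each root and propagating along $T$ forces $\sigma'(e) = +1$ on every tree edge.)

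For \emph{uniqueness}, suppose $\sigma_1$ and $\sigma_2$ are two signatures in the same equivalence class, both with $\sigma_i(e) = +1$ for all $e \in E(T)$. I need to show that $\sigma_1(e) = \sigma_2(e)$ for every edge $e \in E(G)$. This is trivial for $e \in E(T)$. For a non-tree edge $e = uv$, since $T$ is a maximal (spanning) forest of $G$, the endpoints $u$ and $v$ lie in the same component of $T$, so there is a unique $u$--$v$ path $P_e$ in $T$, and $P_e \cup \{e\}$ is a circuit $C_e$. Its sign under $\sigma_i$ equals
\[
\sigma_i(e)\cdot \prod_{f \in E(P_e)} \sigma_i(f) = \sigma_i(e)\cdot 1 = \sigma_i(e).
\]
By Theorem \ref{equ_classes}, $\sigma_1$ and $\sigma_2$ have the same set of negative circuits, hence $\sigma_1(C_e) = \sigma_2(C_e)$, giving $\sigma_1(e) = \sigma_2(e)$.

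The main conceptual point is the interaction between the greedy existence argument and the order of processing vertices: one must be careful that the resigning done at a vertex $v$ does not undo the work already performed at its ancestors, and this is exactly why a rooted tree order works. The uniqueness part is the cleaner half, as it reduces directly to the fundamental circuits of $T$ together with Theorem \ref{equ_classes}. I do not expect any serious obstacle beyond making the existence algorithm precise.
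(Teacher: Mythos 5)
The paper states this proposition without proof, citing Zaslavsky's original article, so there is no in-paper argument to compare against; your proof is correct and is the standard one (and essentially Zaslavsky's): existence via a switching function $\varepsilon$ fixed at the roots and propagated along the rooted forest, uniqueness via the fundamental circuits of $T$ together with Theorem \ref{equ_classes}. One small point of precision: in the greedy version, the test ``$\sigma(e_v) = -1$'' must refer to the \emph{current} signature after the earlier resignings, not the original $\sigma$; the BFS/DFS order makes this well-defined, and your parenthetical $\varepsilon$-formulation sidesteps the issue entirely. Note also that your uniqueness argument handles loops and parallel edges automatically, since a loop is a circuit of length $1$ with empty tree path and a non-tree edge parallel to a tree edge yields a digon.
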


Hence, the number of non-equivalent signatures on any given finite, loopless graph is easy to determine.  

\begin{proposition} [\cite{Homomorphisms}] \label{number_non_equivalent}
If $G$ is a loopless graph with $m$ edges, $n$ vertices and $c$ components, then there are $2^{(m-n+c)}$ non-equivalent signatures on $G$.  
\end{proposition}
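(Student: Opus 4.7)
The plan is to reduce the count directly to the canonical-form proposition stated just above. By that proposition, fixing any maximal forest $T$ of $G$, each equivalence class of signatures on $G$ contains exactly one representative $\sigma$ with $\sigma(e)=+1$ for every $e\in E(T)$. Consequently, the number of equivalence classes equals the number of signatures of $G$ that are identically positive on $T$, which in turn equals $2^{|E(G)\setminus E(T)|}$, since the non-tree edges can be signed freely and independently.

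Hence the only substantive thing to check is the edge count $|E(G)\setminus E(T)|=m-n+c$. First I would recall that a maximal forest of a graph with $c$ components is a spanning forest whose restriction to each component is a spanning tree of that component; a spanning tree on $n_i$ vertices has $n_i-1$ edges, so summing over the $c$ components yields $|E(T)|=\sum_{i=1}^{c}(n_i-1)=n-c$. Subtracting from $m$ gives $m-n+c$ cotree edges, and the result follows.

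The only mild subtlety, which I would address explicitly, is why the canonical representatives are genuinely in bijection with arbitrary $\{\pm1\}$-assignments to the non-tree edges. One direction is immediate from the previous proposition (existence and uniqueness of the canonical representative). For the other direction, given any signature $\sigma'$ on $E(G)\setminus E(T)$, extend it by $+1$ on $E(T)$ to obtain a signature of $G$; this signature is its own canonical representative, and distinct extensions lie in distinct equivalence classes by the uniqueness clause of the canonical-form proposition. The looplessness hypothesis is used here to ensure that every edge is either in $T$ or can be freely re-signed, since a loop can never be placed in a forest and its sign is unchanged by resigning.

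Overall there is no real obstacle: the main work has already been absorbed into the canonical-form proposition, and this statement is essentially a combinatorial corollary together with the standard edge count for a maximal forest.
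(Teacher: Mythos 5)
Your proposal is correct and follows exactly the route the paper itself indicates: the paper derives this proposition (with its ``Hence'') directly from Zaslavsky's canonical-form result, counting the $m-(n-c)$ cotree edges whose signs may be chosen freely. Your explicit verification of the bijection between canonical representatives and $\{\pm 1\}$-assignments on $E(G)\setminus E(T)$ is a careful filling-in of the same argument, so there is nothing to add.
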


In particular, by Proposition \ref{number_non_equivalent}, there is only one such class on any given forest. 
By Theorem \ref{equ_classes}, each signature $\sigma$ defines an equivalence class on the set of all signed graphs on $G$.  
To avoid overloading papers technically, many authors do not use different notations for the equivalence class and for a representative of this class. In most cases, this does not cause any problems if characteristics of signed graphs are studied, which are invariant under switching. We will follow this approach in this paper. 

As far as we know, Cartwright and Harary \cite{C_H_1968} were the first to consider the question of signed graph coloring. 
In Section \ref{Cartwright_Harary}, we shortly introduce this concept from 1968, which seems to be motivated by Theorem \ref{character_balanced}.

The following two statements are 
natural conditions for a coloring $c$ and a corresponding chromatic number of a signed graph $(G,\sigma)$: 

\renewcommand{\labelenumi}{(\roman{enumi})}
\begin{enumerate}
\item [(A)] $c(v) \not= \sigma(e) c(w)$, for each edge $e=vw$. 
\item [(B)] Equivalent signed graphs have the same chromatic number. 
\end{enumerate}

Note that condition (A) requires an (algebraic) interpretation of the edge signs in the set of colors. 
In the 1980s, Zaslavsky introduced a concept of signed graph coloring which satisfies these two conditions. 
In Section \ref{Zaslavsky} we summarize some results of his work on signed graph coloring.

Coloring concepts of signed graphs which satisfy the conditions (A), (B) and the following 
condition (C) will be called {\em strong}.

\renewcommand{\labelenumi}{(\roman{enumi})}
\begin{enumerate}
\item [(C)] The corresponding chromatic number of a balanced graph signed graph $(G,\texttt{\bf 1})$
is equal to the chromatic number of $G$.    
\end{enumerate}

In Section \ref{MRS_KS} we display strong coloring concepts for signed graphs
which had been introduced by M\'a\v{c}ajov\'a, Raspaud, and \v{S}koviera \cite{ModIntro} 
and Kang and Steffen \cite{Circular}. 

Section \ref{Zhu} generalizes some approaches of previous sections by considering permutations on the
edges instead of signatures. This work is mainly driven by questions on coloring planar signed graphs.

A graph $G$ has a $k$-coloring if and only if there is a homomorphism from $G$ into
the complete graph on $k$ vertices. Naserasr, Rollov\'a and Sopena \cite{Homomorphisms}
study signed graph coloring from the viewpoint of signed graph homomorphisms. 
In Section \ref{NRS_Hom} we display this approach. 

The study of vertex colorings of graphs can be reduced to simple graphs. This is not the case in the context of signed graphs. 
We will use the following definition occasionally.  
For a loopless graph $G$ let $\pm G$ be the signed multigraph obtained from $G$ by replacing each edge by two edges, one positive and one negative. The multigraph
$\pm G$ is also called the signed expansion of $G$.

\section{A first approach} \label{Cartwright_Harary}

In 1968, Cartwright and Harary \cite{C_H_1968} gave the following definition of a coloring of signed graphs. 

\begin{definition} [\cite{C_H_1968}]
A $k$-coloring of a signed graph $(G,\sigma)$ is a partition of $V(G)$ into $k$ subsets (called color sets) such that for every edge $e$
with end vertices $v$ and $w$: 
\renewcommand{\labelenumi}{(\roman{enumi})}
\begin{enumerate}
\item if $\sigma(e) = -1$, then $v$ and $w$ are in different color sets,
\item if $\sigma(e) = 1$, then $v$ and $w$ are in the same color set.
\end{enumerate} 
\end{definition}

We say a signed graph has a coloring if it has a $k$-coloring for some $k > 0$. 

\begin{theorem} [\cite{C_H_1968}] \label{CH_1968}
The following statements are equivalent for a signed graph $(G,\sigma)$:
\renewcommand{\labelenumi}{(\roman{enumi})}
\begin{enumerate}
\item $(G,\sigma)$ has a coloring.
\item $(G,\sigma)$ has no negative edge joining two vertices of a positive component.
\item $(G,\sigma)$ has no circuit with exactly one negative edge. 
\end{enumerate}
\end{theorem}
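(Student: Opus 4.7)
The plan is to establish the cycle of implications (i) $\Rightarrow$ (ii) $\Rightarrow$ (iii) $\Rightarrow$ (i). The guiding observation is that the positive-edge constraint of the coloring definition forces the color to be constant along every positive path, so any coloring is essentially a labeling of the positive components, while the negative-edge constraint becomes a local obstruction of exactly the types listed in (ii) and (iii).

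For (i) $\Rightarrow$ (ii), I would assume that $c$ is a Cartwright-Harary coloring and argue by induction on path length that any two vertices lying in the same positive component receive the same color under $c$. A negative edge inside a positive component would then force its two ends to be simultaneously in the same class (by the positive-path argument) and in different classes (by the definition), a contradiction. The implication (ii) $\Rightarrow$ (iii) I would handle by contrapositive: if $C$ is a circuit containing exactly one negative edge $e=vw$, then $C-e$ is a positive $v$-$w$-path, so $v$ and $w$ lie in a common positive component and $e$ is a negative edge witnessing the failure of (ii).

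For (iii) $\Rightarrow$ (i), the natural candidate is to take the color classes to be the vertex sets of the positive components of $(G,\sigma)$, equivalently the equivalence classes of the relation ``joined by a positive path'' (with isolated vertices forming singleton classes). The positive-edge condition then holds by construction. For the negative-edge condition, I would argue by contradiction: given a negative edge $e=vw$ whose ends lie in the same positive component, select a simple positive $v$-$w$-path $P$ in the subgraph spanned by $E^+_\sigma(G)$; since $P$ uses only positive edges it is edge-disjoint from $e$, so $P+e$ is a circuit containing exactly one negative edge, violating (iii). The degenerate loop case $v=w$ is treated separately: then $e$ itself is a negative $C_1$, which is already a circuit with a single negative edge.

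The only genuinely delicate point is the last one: one must take care that the positive $v$-$w$-walk inside the common positive component can be chosen simple and disjoint from $e$, so that $P+e$ is a \emph{bona fide} circuit rather than merely a closed walk, and one must explicitly accommodate loops through the $C_1$ case. Everything else is bookkeeping of the definitions.
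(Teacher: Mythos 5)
Your proof is correct: the cycle (i) $\Rightarrow$ (ii) $\Rightarrow$ (iii) $\Rightarrow$ (i), the key observation that a coloring must be constant on each positive component, and the construction of $P+e$ as a \emph{bona fide} circuit with exactly one negative edge (taking $P$ simple inside the positive subgraph, with the loop case $C_1$ handled separately) is precisely the canonical argument; the survey itself states the theorem without proof, citing Cartwright and Harary, and your argument matches the standard one given there. No gaps.
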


Hence, a signed complete graph has a coloring if and only if it has no triangle with exactly one negative edge. 
Cartwright and Harary \cite{C_H_1968} observed that 
$(G,\sigma)$ has a 2-coloring if and only if $(G,\sigma)$ is balanced. For the all-negative 
signed graph $(G,\texttt{\bf -1})$, Theorem \ref{CH_1968} implies a classical result of K\"onig \cite{Koenig_1950}, 
that a graph is bipartite
if and only if it does not contain an odd circuit. Cartwright and Harary studied further variants of these kind of colorings.

Bezhad, Chartrand \cite{Behzad_Chartrand_1969} gave a definition of a signed line graph of a signed graph
and extended this coloring concept to edge-coloring of signed graphs.

\section{The fundamental approach} \label{Zaslavsky}

Zaslavsky's papers \cite{chromaticInvariants, Signedgraphcoloring,SignedGraphs,SignedGraphs_e,Colorful} in the early 1980s 
can be considered as pioneering work on signed graph coloring. The natural constraints for a coloring $c$ of a signed graph $(G,\sigma)$ are (A) and (B). Recall: 

\renewcommand{\labelenumi}{(\roman{enumi})}
\begin{enumerate}
\item[(A)] $c(v) \not= \sigma(e) c(w)$, for each edge $e=vw$.
\item[(B)] Equivalent signed graphs have the same chromatic number. 
\end{enumerate}

Condition (B) implies that colors have to be changed under switching.

 Let $M_{2k+1} = \{0,\pm 1,\pm 2, \dots ,\pm k\} \subseteq \mathbb{Z}$. 
Zaslavsky \cite{Signedgraphcoloring, Colorful} defined a coloring with $k$ colors or with $2k+1$ 
``signed colors'' of a signed graph $(G,\sigma)$ as a mapping $c$ from $V(G)$ 
into $M_{2k+1}$. Coloring $c$ is proper if it satisfies condition $(A)$.
As already mentioned, we only consider proper colorings, so we skip the term 
proper in the following. 
It is easy to see, that if $c$ is a coloring of $(G,\sigma)$ and $(G,\sigma')$ is obtained from $(G,\sigma)$ by switching at $X$, then $c'$ with $c'(v)= -c(v)$ if $v \in X$
and $c'(w) = c(w)$ if $w \in V(G)- X$ is a coloring of $(G,\sigma')$.

A coloring is zero-free if it does not use the color $0$. The exceptional role of the
color $0$ is due to the fact that it is self-inverse. That is, if $c$ is a coloring of
a signed graph $(G,\sigma)$, which uses the color 0, then $G[c^{-1}(0)]$ is an independent set in $G$
while for $t \not= 0$, $G[c^{-1}(t)]$ may contain negative edges. Zaslavsky \cite{chromaticInvariants,Signedgraphcoloring,Colorful} defined 
the chromatic polynomial $\chi_G(\lambda)$ to be the function for odd positive numbers $\lambda = 2k+1$ whose value 
is equal to the number of proper colorings of a signed graph with $k$ colors. For even positive numbers $\lambda = 2k$,
he defined the balanced chromatic polynomial $\chi_G^b(\lambda)$ whose value is the number of zero-free proper colorings of $(G,\sigma)$ with $k$ colors. Consequently, his definition for the 
chromatic number $\gamma(G,\sigma)$ of a signed graph $(G,\sigma)$ is the smallest $k \geq 0$ for which 
$\chi_G(2k+1) > 0$, and the zero-free chromatic number $\gamma^*(G,\sigma)$ is the smallest number $k$
for which $\chi_G^b(2k) > 0$. Clearly, if $(G,\sigma)$ has a $k$-coloring, then it has a zero-free $(k+1)$-coloring. Furthermore, equivalent signed graphs have the same 
chromatic number and the same zero-free chromatic number and therefore, condition $(B)$ is satisfied for both concepts. 

Major parts of Zalavsky's work on signed graph coloring are devoted to the interplay between colorings and zero-free colorings through the chromatic polynomial. 
Due to the large number of interesting results in this field we refer the interested reader to the original papers \cite{chromaticInvariants, Signedgraphcoloring,SignedGraphs,SignedGraphs_e,Colorful}
in this respect and focus on results on (upper) bounds for $\gamma$ and $\gamma^*$. 

\begin{theorem} [\cite{Colorful}]  \label{Zaslavsky_zero-free}
The zero-free chromatic number of a signed graph $(G,\sigma)$ is equal to
the minimum number of antibalanced sets into which $V(G)$ can be partitioned, and to
$\min\{\chi(G[E^+_{\sigma'}(G)]): \sigma' \sim \sigma \}$. 
\end{theorem}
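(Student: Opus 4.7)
The plan is to prove the two equalities in succession, connecting both characterizations to the definition of a zero-free signed coloring via the structural characterization of (anti)balance in Theorem \ref{character_balanced}.

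First I would prove that $\gamma^{*}(G,\sigma)$ equals the minimum number of antibalanced sets in a partition of $V(G)$. For the forward direction, take a zero-free proper signed coloring $c:V(G)\to\{\pm 1,\dots,\pm k\}$ and define $S_t=c^{-1}(\{t,-t\})$ for $t=1,\dots,k$. For an edge $e=vw$ with both endpoints in $S_t$, the constraint $c(v)\neq\sigma(e)c(w)$ combined with $\{c(v),c(w)\}\subseteq\{t,-t\}$ forces $\sigma(e)=-1$ whenever $c(v)=c(w)$, and $\sigma(e)=+1$ whenever $c(v)=-c(w)$. Setting $A_t=c^{-1}(t)\cap S_t$ and $B_t=c^{-1}(-t)\cap S_t$, all edges inside $A_t$ and inside $B_t$ are negative while all edges of $\partial(A_t)$ inside $G[S_t]$ are positive. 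Applying Theorem \ref{character_balanced} to the signed graph $(G[S_t],-\sigma|_{E(G[S_t])})$ shows that this graph is balanced, hence $(G[S_t],\sigma|_{E(G[S_t])})$ is antibalanced. Conversely, given a partition into $k$ antibalanced sets, each $(G[S_t],\sigma|_{E(G[S_t])})$ is equivalent to $(G[S_t],\mathbf{-1})$, so by Theorem \ref{character_balanced} applied to $-\sigma|_{E(G[S_t])}$ there is a bipartition $S_t=A_t\cup B_t$; coloring $A_t$ by $t$ and $B_t$ by $-t$ yields a zero-free $k$-coloring (for edges between different $S_t,S_s$ the constraint is automatic since $|c(v)|\neq|c(w)|$; for edges inside $S_t$ it follows by reversing the argument above).

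Next I would prove that this equals $\min\{\chi(G[E^{+}_{\sigma'}(G)]):\sigma'\sim\sigma\}$. For the upper bound, start from a minimum partition $S_1,\dots,S_k$ of $V(G)$ into antibalanced sets and, within each $S_t$, perform the resignings at vertices of $B_t$ (as above) that turn every edge of $G[S_t]$ negative. Since resignings inside $S_t$ do not affect edges in $\partial(S_t)$, the resulting signature $\sigma'$ is equivalent to $\sigma$ and $E^{+}_{\sigma'}(G)\subseteq\partial(S_1)\cup\dots\cup\partial(S_k)$, so the map $v\mapsto t$ for $v\in S_t$ is a proper $k$-coloring of $G[E^{+}_{\sigma'}(G)]$. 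For the lower bound, if $\sigma'\sim\sigma$ and $f:V(G)\to\{1,\dots,k\}$ is a proper coloring of $G[E^{+}_{\sigma'}(G)]$, then every edge of $G[f^{-1}(t)]$ is negative in $\sigma'$, so $(G[f^{-1}(t)],\sigma'|_{E(G[f^{-1}(t)])})$ is all-negative and hence antibalanced; since antibalance is invariant under resigning (restricting the sequence of switches yields an equivalent signature on $G[f^{-1}(t)]$), the same holds for $\sigma$, and we have produced a partition into $k$ antibalanced sets.

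The only real obstacle is the bookkeeping in the first paragraph: carefully tracking which edges become positive and which negative inside each $S_t$, and noticing that one must apply Theorem \ref{character_balanced} to the negated signature (so that \emph{balance} of $-\sigma$ corresponds to \emph{antibalance} of $\sigma$). Everything else is either a direct consequence of the definitions or the straightforward observation that resignings localized inside one part of a vertex partition do not alter the signs of crossing edges.
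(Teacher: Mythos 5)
The survey states this theorem only as a cited result of Zaslavsky and contains no proof of its own, so there is nothing internal to compare against; judged on its own terms, your argument is correct and is in essence the standard one. Both directions of the first equality are handled properly: you correctly translate the zero-free constraint $c(v)\neq\sigma(e)c(w)$ on $S_t=c^{-1}(\{t,-t\})$ into ``negative inside $A_t$ and $B_t$, positive across,'' and you correctly route the (anti)balance through Theorem \ref{character_balanced} applied to the negated signature, using that $(H,\sigma)$ is antibalanced iff $(H,-\sigma)$ is balanced (which matches the paper's circuit-sign characterization of antibalance). The lower bound of the second equality is also clean, including the parenthetical point that a resigning set $X$ on $G$ restricts to a resigning set $X\cap f^{-1}(t)$ on each $G[f^{-1}(t)]$, so antibalance of the parts is signature-class invariant.

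One sentence in your second paragraph is literally false, though harmlessly so: resigning at $B_t\subseteq S_t$ \emph{does} change the signs of the edges of $\partial(B_t)$ that leave $S_t$, so crossing edges are not unaffected. The argument survives because the containment $E^{+}_{\sigma'}(G)\subseteq\partial(S_1)\cup\dots\cup\partial(S_k)$ needs only that every edge \emph{inside} some $S_t$ ends up negative, and the signs of crossing edges are irrelevant to it. To see the inside claim correctly: an edge with both ends in $S_t$ is touched only by resignings at vertices of $S_t$, hence flips iff exactly one endpoint lies in $B_t$; this turns the positive $A_t$--$B_t$ edges negative and leaves the negative edges inside $A_t$ and inside $B_t$ negative. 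With that one justification repaired, the proof is complete. (If one wants the statement in full generality, a word on loops would not hurt: a positive loop makes all three quantities infinite, a negative loop is irrelevant to zero-free colorings and to antibalance, so nothing breaks.)
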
 

\begin{corollary} \label{Z_disadvantage}
Let $G$ be a graph. If $\chi(G)$ is even, then 
$\gamma(G,\texttt{\bf 1)} = \frac{1}{2} \chi(G) = \gamma^*(G,\texttt{\bf 1})$.
If $\chi(G)$ is odd, then $\gamma(G,\texttt{\bf 1)} = \frac{1}{2} (\chi(G) - 1)$ and 
$\gamma^*(G,\texttt{\bf 1}) = \frac{1}{2} (\chi(G) + 1)$.
\end{corollary}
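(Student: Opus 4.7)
The plan is to treat the two chromatic numbers separately, since each is extracted from a different characterization: $\gamma(G,\texttt{\bf 1})$ comes straight out of Zaslavsky's definition, while $\gamma^*(G,\texttt{\bf 1})$ is computed via the second form of Theorem \ref{Zaslavsky_zero-free}.

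For $\gamma(G,\texttt{\bf 1})$ the plan is to unwind definitions. A $k$-coloring is a map $c : V(G) \to \{-k,\dots,0,\dots,k\}$ satisfying $c(v) \ne \sigma(e) c(w)$ on every edge $e = vw$. With $\sigma \equiv 1$ this is just $c(v) \ne c(w)$, so a $k$-coloring of $(G,\texttt{\bf 1})$ is nothing but a proper vertex coloring of $G$ from a palette of size $2k+1$. Hence $\gamma(G,\texttt{\bf 1})$ is the smallest $k$ with $2k+1 \ge \chi(G)$, namely $\lceil (\chi(G)-1)/2 \rceil$, which evaluates to $\chi(G)/2$ when $\chi(G)$ is even and $(\chi(G)-1)/2$ when $\chi(G)$ is odd.

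For $\gamma^*(G,\texttt{\bf 1})$ the plan is to apply Theorem \ref{Zaslavsky_zero-free} in the form $\gamma^*(G,\texttt{\bf 1}) = \min\{\chi(G[E^+_{\sigma'}(G)]) : \sigma' \sim \texttt{\bf 1}\}$. Every signature equivalent to $\texttt{\bf 1}$ is obtained by resigning at some subset $A \subseteq V(G)$, and such a resigning flips precisely the edges of $\partial(A)$; therefore the positive edges of $\sigma'$ are exactly $E(G[A]) \cup E(G[V(G)\setminus A])$, so $G[E^+_{\sigma'}(G)]$ is the vertex-disjoint union of $G[A]$ and $G[V(G)\setminus A]$, with chromatic number $\max\{\chi(G[A]), \chi(G[V(G)\setminus A])\}$. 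It follows that
\[
\gamma^*(G,\texttt{\bf 1}) \;=\; \min_{A \subseteq V(G)} \max\{\chi(G[A]), \chi(G[V(G)\setminus A])\}.
\]
The upper bound $\lceil \chi(G)/2 \rceil$ is obtained by taking an optimal proper coloring of $G$ and partitioning its $\chi(G)$ color classes as evenly as possible between $A$ and $V(G)\setminus A$. The matching lower bound comes from the observation that proper $m$-colorings of $G[A]$ and $G[V(G)\setminus A]$ using disjoint palettes combine to a proper $2m$-coloring of $G$, forcing $2m \ge \chi(G)$. This yields $\gamma^*(G,\texttt{\bf 1}) = \lceil \chi(G)/2 \rceil$, which is $\chi(G)/2$ in the even case and $(\chi(G)+1)/2$ in the odd case.

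The only step that requires any real care is the identification of the equivalence class of $\texttt{\bf 1}$ with the family of switchings at subsets of $V(G)$; this is immediate from the discussion preceding the corollary to Theorem \ref{character_balanced}, since balanced signatures are exactly those equivalent to $\texttt{\bf 1}$ and switching at $A$ toggles the signs of $\partial(A)$ while fixing all other edges. Everything else is a routine case split on the parity of $\chi(G)$.
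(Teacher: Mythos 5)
Your proposal is correct, and the $\gamma$ half is exactly the immediate definitional argument the paper intends: for $\sigma = \texttt{\bf 1}$ the constraint $c(v) \neq \sigma(e)c(w)$ degenerates to $c(v) \neq c(w)$, so a signed $k$-coloring is a proper coloring from a palette of $2k+1$ colors, giving $\gamma(G,\texttt{\bf 1}) = \lceil (\chi(G)-1)/2 \rceil$. For the $\gamma^*$ half, however, you take a genuinely longer route than necessary. The same one-line unwinding works there too: a zero-free $k$-coloring of $(G,\texttt{\bf 1})$ uses the palette $\{\pm 1,\dots,\pm k\}$ of $2k$ colors and, with the all-positive signature, is again just a proper coloring, so $\gamma^*(G,\texttt{\bf 1})$ is the least $k$ with $2k \geq \chi(G)$, i.e.\ $\lceil \chi(G)/2 \rceil$, with no appeal to Theorem \ref{Zaslavsky_zero-free} at all — which is why the paper states the whole corollary without proof. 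Your detour through the characterization $\gamma^* = \min\{\chi(G[E^+_{\sigma'}(G)]) : \sigma' \sim \texttt{\bf 1}\}$ is nevertheless sound: the identification of the equivalence class of $\texttt{\bf 1}$ with switchings at subsets $A \subseteq V(G)$ is exactly the remark the paper makes before Theorem \ref{character_balanced}'s corollary, your min-max identity $\gamma^*(G,\texttt{\bf 1}) = \min_A \max\{\chi(G[A]), \chi(G[V(G)\setminus A])\}$ is right, and both bounds (splitting color classes evenly; recombining disjoint palettes across $\partial(A)$) are clean. What your route buys is a reformulation of independent interest — it is essentially the ``minimum number of antibalanced sets'' form of Theorem \ref{Zaslavsky_zero-free} specialized to balanced graphs, where antibalanced sets become bipartite-inducing sets — but as a proof of the corollary it trades a two-line verification for a theorem citation plus a min-max argument.
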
 

This gives us a first set of general upper bounds for the zero-free chromatic number. In fact, from these and the above mentioned properties one can derive the following classification.

\begin {corollary} [\cite{Colorful}] \label{classification}
Let $n$ be a positive integer, $G$ be a graph with no loops and $|V(G)|=n$.
Then $\gamma^*(G,\sigma)=n$ if $(G,\sigma)= \pm K_n$; 
$\gamma^*(G,\sigma)=n-1$ if $(G,\sigma)= \pm K_n - E'$ where $E' \subseteq E(\pm K_n)$ is either a non-empty  set of edges at one vertex of $\pm K_n$ or an unbalanced triangle; and otherwise $\gamma^*(G,\sigma)\leq n-2$, 
and $\gamma^*(G,\sigma)=1$ if and only if $(G,\sigma)$ is antibalanced.
\end{corollary}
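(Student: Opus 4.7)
The entire argument runs through Theorem \ref{Zaslavsky_zero-free}, using the characterization that $\gamma^*(G,\sigma)$ equals the minimum number of antibalanced subsets into which $V(G)$ can be partitioned. The first preparatory step is a local description of antibalanced sets inside subgraphs of $\pm K_n$: any pair $\{u,v\}$ carrying both the positive and the negative copy of the $uv$-edge induces an unbalanced $C_2$, an even negative circuit, which forbids antibalance. Hence, for $|A|\ge 2$ antibalanced, between every pair in $A$ there can be at most one edge; and if $A=\{a,b,c\}$ carries an edge between every pair, the resulting triangle is the only circuit in $G[A]$ and must be negative.

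With this criterion, the first line is immediate: in $\pm K_n$ every pair has both edges, so no set of size $\ge 2$ is antibalanced and $\gamma^*=n$. When $E'$ is a non-empty set of edges at a single vertex $v$, any antibalanced $A$ of size $\ge 2$ must contain $v$ (else some pair outside $v$ keeps both edges) and must have size exactly two (else two vertices of $A$ both differ from $v$), so only one antibalanced pair can appear in a partition, giving $\gamma^*=n-1$. When $E'$ is an unbalanced triangle on $\{a,b,c\}$, the same argument restricts antibalanced sets of size $\ge 2$ to subsets of $\{a,b,c\}$; each pair is antibalanced, but $\{a,b,c\}$ is not, because replacing each edge of $E'$ by its oppositely signed twin flips the sign product from $-1$ to $+1$, producing an odd positive circuit. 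Since the three admissible pairs pairwise intersect, only one fits in any partition, and $\gamma^*=n-1$.

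For the otherwise clause I examine the set $F=E(\pm K_n)\setminus E(G)$ of missing edges. If $F$ contains two vertex-disjoint edges, their four endpoints supply two disjoint antibalanced pairs, yielding a partition into $n-2$ antibalanced sets. Otherwise the edges of $F$ pairwise share a vertex; a short matching-number argument shows that either $F$ is contained in a star (excluded, since a star $F$ falls under the previous case) or $F$ lies inside $\pm K_3$ on three vertices $\{a,b,c\}$ and contains the underlying triangle. When $|F|=3$ this triangle is not unbalanced (excluded), hence balanced, so the complementary triangle in $G[\{a,b,c\}]$ has sign product $-1$ (three sign flips of a $+1$ product) and is antibalanced. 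When $|F|\ge 4$, some pair inside $\{a,b,c\}$ has both edges removed, so $G[\{a,b,c\}]$ is acyclic and vacuously antibalanced. Either way, $\{a,b,c\}$ together with $n-3$ singletons witnesses $\gamma^*\le n-2$. The final clause $\gamma^*=1$ if and only if $(G,\sigma)$ is antibalanced is immediate from Theorem \ref{Zaslavsky_zero-free}, since a single antibalanced part of a partition must be all of $V(G)$.

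The main obstacles are two small bookkeeping steps rather than any deep idea: the combinatorial fact that a family of edges with matching number at most one is contained in a star or in a triangle (in the multigraph setting this includes the parallel copies inside a triangle), and the sign-flip bookkeeping that makes the balanced/unbalanced dichotomy align exactly with $\gamma^*\le n-2$ versus $\gamma^*=n-1$ for triangular $F$.
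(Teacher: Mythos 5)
Your proposal is correct and takes essentially the same route the paper indicates: the survey states this corollary without a detailed proof, presenting it as a consequence of Theorem~\ref{Zaslavsky_zero-free}'s characterization of $\gamma^*$ as the minimum number of antibalanced sets partitioning $V(G)$, which is exactly the characterization you build on. Your local analysis (a pair carrying a negative digon is never antibalanced, a full pair-free triangle is antibalanced iff negative), the matching-number split on the missing edge set $F$, and the sign bookkeeping separating the balanced from the unbalanced triangular $F$ correctly fill in all the details of that derivation.
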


Furthermore, upper bounds for the zero-free chromatic number in terms of the order of a graph are given.

\begin{theorem} [\cite{Colorful}]
Let $(G,\sigma)$ be a signed simple graph. If $|V(G)|=n$, then  
$\gamma^*(G,\sigma) \leq \lceil \frac{n}{2} \rceil$, with equality precisely when 
$(G,\sigma) = (K_n,\texttt{\bf 1})$ or \\
$n$ is even and $(G,\sigma)$ contains a $(K_{n-1},\texttt{\bf 1})$ or \\
$n=4$ and $(G,\sigma)$ is an unbalanced circuit on 4 vertices or \\
$n=6$ and $(G,\sigma)$ is equivalent to $(K_6,\sigma')$, where $N_{\sigma'}$ is the edge set
of a circuit of length 5. \\
Also, $\gamma^*(G,\sigma) \geq 1$, with equality precisely when $(G,\sigma)$ is equivalent to $(G,\texttt{\bf -1})$. 
\end{theorem}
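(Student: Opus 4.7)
The starting point is Theorem \ref{Zaslavsky_zero-free}, which states that $\gamma^*(G,\sigma)$ equals the minimum number of antibalanced subsets in a partition of $V(G)$, and equivalently $\min\{\chi(G[E^+_{\sigma'}(G)]) : \sigma'\sim \sigma\}$. The upper bound follows at once: in a simple graph every two vertices induce at most one edge, and a signed graph with at most one edge is trivially antibalanced (resigning at one endpoint of a lone edge makes it negative), so partitioning $V(G)$ into pairs together with a singleton (when $n$ is odd) yields $\lceil n/2\rceil$ antibalanced sets. For the final claim, $\gamma^*(G,\sigma)=1$ holds iff some $\sigma'\sim\sigma$ has $E^+_{\sigma'}(G)=\emptyset$, i.e.\ $\sigma'=\texttt{\bf -1}$, which by definition means $(G,\sigma)$ is antibalanced.

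Sufficiency of the four equality cases I would verify one by one. For $(K_n,\texttt{\bf 1})$, Corollary \ref{Z_disadvantage} applied to $\chi(K_n)=n$ yields $\gamma^*=\lceil n/2\rceil$. If $n$ is even and $(G,\sigma)$ contains $(K_{n-1},\texttt{\bf 1})$ as an induced subgraph on vertex set $V_1$, then in any antibalanced partition $V(G)=S_1\cup\cdots\cup S_k$ each induced signed subgraph $(G[S_i\cap V_1],\sigma)=(K_{|S_i\cap V_1|},\texttt{\bf 1})$ must be antibalanced; but $(K_m,\texttt{\bf 1})$ has every cycle positive, hence can be antibalanced only if it has no odd cycle, i.e.\ $m\leq 2$. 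Thus $n-1=\sum_i|S_i\cap V_1|\leq 2k$, giving $k\geq n/2$. For an unbalanced $C_4$, the graph itself is a negative even cycle, which is not antibalanced (antibalanced even cycles are positive), so $\gamma^*\geq 2$. For $(K_6,\sigma')$ with $N_{\sigma'}=E(C_5)$, I would invoke the description of antibalanced complete signed graphs obtained by applying Theorem \ref{character_balanced} to $(K_m,-\tau)$: $(K_m,\tau)$ is antibalanced iff $V(K_m)$ splits as $X\sqcup Y$ with every edge inside $X$ or $Y$ negative and every edge between $X$ and $Y$ positive. Because the negative edges of $\sigma'$ form a $C_5$, two vertices sitting in a common bipart must be adjacent in that $C_5$; combined with the fact that the sixth vertex is incident only to positive edges, a short case split on where vertex $6$ and the bipartitions $A=A_1\sqcup A_2$, $B=B_1\sqcup B_2$ land in the $C_5$ rules out every partition $V(K_6)=A\cup B$ into two antibalanced sets, giving $\gamma^*\geq 3$.

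The main obstacle is the converse direction. I would need to show that if $(G,\sigma)$ is not one of the four listed types, then an antibalanced partition into $\lceil n/2\rceil-1$ sets exists. The natural strategy is to locate a triple $\{u,v,w\}\subseteq V(G)$ whose induced signed subgraph is antibalanced (any three-vertex signed subgraph is antibalanced except a positive triangle) and then pair off the remaining $n-3$ vertices. Such a triple is always available unless every three-vertex induced subgraph is a positive triangle, which forces $G=K_n$ with every triangle positive, equivalently $(G,\sigma)\sim(K_n,\texttt{\bf 1})$, i.e.\ case (a). The delicate part is the near-complete regime in which exactly one or two edges are missing, or exactly one triangle is negative: this is precisely where the $n=4$ and $n=6$ exceptional cases force themselves, since in those small orders the constraints to avoid simultaneously every positive-triangle-free triple and a containment of $(K_{n-1},\texttt{\bf 1})$ leave only the unbalanced $C_4$ and the $(K_6,\sigma')$ configurations uncovered. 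I expect this careful bookkeeping of how an antibalanced triple (or a larger antibalanced chunk) can be extracted whenever $(G,\sigma)$ evades the list to be the bulk of the argument.
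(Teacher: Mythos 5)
The survey itself states this theorem without proof (it is cited to Zaslavsky's paper \cite{Colorful}), so your proposal can only be judged on its own merits. The correct parts: the upper bound via pairs and a singleton (each inducing at most one edge, hence antibalanced) is exactly right given Theorem \ref{Zaslavsky_zero-free}; the characterization $\gamma^*(G,\sigma)=1$ iff $(G,\sigma)\sim(G,\texttt{\bf -1})$ is immediate from the $\min\{\chi(G[E^+_{\sigma'}(G)])\}$ formulation; and your sufficiency arguments for the equality cases are sound — case (a) via Corollary \ref{Z_disadvantage}, case (b) via the observation that an antibalanced part can meet a balanced $K_{n-1}$ in at most $2$ vertices (so $k\geq (n-1)/2$, hence $k\geq n/2$ for even $n$), case (c) trivially, and case (d) by a finite check against the two-part Harary-type characterization of antibalanced complete graphs, which is legitimate even if only sketched.

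The genuine gap is in the converse, and it is a concrete parity error, not just deferred bookkeeping. For even $n$, extracting one antibalanced triple and pairing off the rest does \emph{not} beat the bound: $n-3$ is odd, so you get $1+\frac{n-4}{2}+1=\frac{n}{2}$ antibalanced sets — exactly $\lceil n/2\rceil$, with no improvement. Consequently your proposed dichotomy ("an antibalanced triple exists unless every triple is a positive triangle, i.e.\ $(G,\sigma)\sim(K_n,\texttt{\bf 1})$") settles only the odd case, where indeed (a) is the sole equality case. For even $n$ the existence of an antibalanced triple is fully compatible with equality: a balanced $K_{n-1}$ plus one further vertex $v$ has many antibalanced triples through $v$, yet lies in equality case (b). What the even case actually requires is the existence of an antibalanced $4$-set, or of two disjoint antibalanced triples, whenever $(G,\sigma)$ avoids all of (a)--(d); proving that this structure is always extractable — and that its failure forces containment of a balanced $K_{n-1}$, the unbalanced $C_4$, or the $K_6$ with negative $C_5$ — is the bulk of the theorem and is entirely absent from your plan. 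As written, the converse direction fails for every even $n$, so the "careful bookkeeping" you postpone is not a routine refinement of your triple-extraction scheme but a different and substantially harder argument.
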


A signed graph is orientation-embeddable into a surface $S$ if it is embeddable 
into $S$ and
a closed walk reverses orientation if and only if its sign product is $-1$.

\begin{theorem} [\cite{KleinBottle}] \label{Thm_Surface_Z}
Let $(G,\sigma)$ be a signed graph without positive loops. If $(G,\sigma)$ is 
orientation-embeddable
into the projective plane or into the Klein bottle, then  $\gamma(G,\sigma) \leq 2$ and $\gamma^*(G,\sigma) \leq 3$.
For both surfaces there are signed graphs where equality holds.  
\end{theorem}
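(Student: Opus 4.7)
The plan is to split the argument into the two chromatic upper bounds and the extremal examples, with the orientation double cover $\tilde{G}\to G$ as the unifying device. Orientable embeddability forces $\tilde{G}$ to be embedded in the orientable double cover of the host surface: the sphere in the projective-plane case, and the torus in the Klein-bottle case. The hypothesis of no positive loops ensures that the deck involution $\tau$ on $\tilde{G}$ is fixed-point free.

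For the bound $\gamma^*(G,\sigma)\leq 3$, I would appeal to Theorem~\ref{Zaslavsky_zero-free} and produce a resigning $\sigma'\sim\sigma$ with $\chi(G[E^+_{\sigma'}(G)])\leq 3$. Every circuit of $G[E^+_{\sigma'}(G)]$ has sign product $+1$, so by Theorem~\ref{equ_classes} it is a positive circuit of $(G,\sigma)$; in the orientable-embedding setup these are exactly the null-homotopic circuits of the host surface. Consequently, the inclusion $G[E^+_{\sigma'}(G)]\hookrightarrow S$ induces the trivial map on fundamental groups, so $G[E^+_{\sigma'}(G)]$ lifts to the universal cover and is planar. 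The Four Color Theorem then yields $\chi\leq 4$, and the extra reduction to $3$ is the crux: I would choose $\sigma'$ so that the positive subgraph is drawn inside a disk fundamental domain obtained by cutting the surface along a suitable non-contractible curve (one curve for the projective plane, two for the Klein bottle), and combine this with a structural argument ruling out unavoidable $4$-chromatic configurations in the cut diagram.

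For the bound $\gamma(G,\sigma)\leq 2$ I would work with the equivalent formulation as a partition $V(G)=V_0\cup V_1\cup V_2$ in which $V_0$ is independent in $G$ and $(G[V_i],\sigma|_{E(G[V_i])})$ is antibalanced for $i=1,2$; equivalently, a $\tau$-anti-equivariant proper $5$-coloring $\phi\colon V(\tilde{G})\to\{-2,-1,0,1,2\}$ satisfying $\phi\circ\tau=-\phi$. Starting from a proper coloring of $\tilde{G}$ furnished by the Five Color Theorem (projective-plane case) or a suitable toroidal bound (Klein-bottle case), one symmetrizes under the free involution $\tau$, exploiting the color $0$ to absorb asymmetric residues. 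The matching extremal examples come from chromatic-number-achieving graphs on each surface: on the projective plane, the triangular embedding of $K_6$ with its orientation signature has orientation double cover equal to the icosahedron, whose antipodal quotient is again $K_6$; any anti-equivariant $4$-coloring of the icosahedron would descend to a $2$-coloring of $K_6$, which is impossible since $\chi(K_6)=6$, forcing $\gamma^*\geq 3$, and a parallel argument gives $\gamma\geq 2$. An analogous construction on the Klein bottle yields equality there.

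The principal obstacle is the reduction from $\chi(G[E^+_{\sigma'}(G)])\leq 4$ (immediate from planarity of the positive subgraph) to $\leq 3$: this genuine chromatic gain of one cannot come from the Four Color Theorem alone, and must be extracted from the non-orientability of the host surface via the flexibility of resigning. A secondary obstacle is the parallel $\tau$-anti-equivariant symmetrization needed to pass from a proper coloring of $\tilde{G}$ to a signed $2$-coloring of $(G,\sigma)$, which must be tailored to the specific covering geometry of each of the two surfaces.
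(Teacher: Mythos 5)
Your sketch contains a concrete topological error that breaks the Klein-bottle half of the argument, and the remaining two crux steps are named but not supplied. First, in a Klein-bottle orientation embedding the positive circuits are the orientation-preserving ones, and these are \emph{not} the null-homotopic circuits: the orientation-preserving loops form an index-2 subgroup of the fundamental group, and the orientation double cover is the torus, not the plane. So the positive subgraph $G[E^+_{\sigma'}(G)]$ lifts only to the torus, it need not be planar, and the best your route yields there is $\chi \leq 7$ (Heawood for the torus), nowhere near the $\chi \leq 3$ you need to invoke Theorem~\ref{Zaslavsky_zero-free}. Your "lifts to the universal cover" step is valid only for the projective plane, where the orientation cover and the universal cover coincide. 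Second, even in the projective-plane case, you correctly identify the reduction from $\chi \leq 4$ to $\chi \leq 3$ as the crux, but "cutting along a non-contractible curve and ruling out unavoidable $4$-chromatic configurations" is not an argument: a graph drawn in a disk is just a planar graph and can perfectly well be $4$-chromatic, so cutting by itself buys nothing. That missing step \emph{is} the theorem; similarly, for $\gamma \leq 2$ your plan to "symmetrize" a Five-Color-Theorem coloring under the deck involution $\tau$, "exploiting the color $0$ to absorb asymmetric residues," names no mechanism. An arbitrary proper coloring of $\tilde{G}$ admits no local repair into one satisfying $\phi \circ \tau = -\phi$; producing an anti-equivariant (antipodal) $5$-coloring is precisely the hard equivariant content of the result and cannot be outsourced to the unsymmetric Five Color Theorem.

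Your extremal example is also not sound as stated. An anti-equivariant $4$-coloring of the icosahedron (colors $\{\pm 1,\pm 2\}$) descends to a zero-free $2$-coloring of the signed $K_6$, i.e.\ a partition of $V(K_6)$ into two \emph{antibalanced} sets; the classes $c^{-1}(\{i,-i\})$ are not independent in $K_6$, so this is not an ordinary proper $2$-coloring, and "impossible since $\chi(K_6)=6$" is a non sequitur. To rule it out you must argue from the signature of the embedding --- facial triangles are positive, essential triangles negative --- that no such partition into antibalanced sets exists, e.g.\ via the characterization $\gamma^*(G,\sigma) = \min\{\chi(G[E^+_{\sigma'}(G)]) : \sigma' \sim \sigma\}$ of Theorem~\ref{Zaslavsky_zero-free}. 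For calibration: the survey you are working from states this theorem without proof, citing Zaslavsky \cite{KleinBottle}, so the standard of comparison is that original paper, whose proof establishes the equivariant coloring bounds directly rather than by symmetrizing unsigned surface coloring theorems; your double-cover framing is a reasonable starting point for the projective plane, but as it stands both upper bounds and the sharpness claims remain unproved.
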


\section{Strong concepts for coloring signed graphs} \label{MRS_KS}

This section considers concepts for coloring signed graphs which satisfy conditions (A), (B), and (C).

\subsection{$n$-coloring of signed graphs} \label{MRS}

For each $n \geq 1$, a set $M_n \subseteq \mathbb{Z}$ of colors is defined as $M_n = \{\pm 1,\pm 2, \dots ,\pm k\}$ if $n = 2k$, and 
$M_n = \{0,\pm 1,\pm 2, \dots ,\pm k\}$ if $n = 2k + 1$. 
An $n$-coloring of a signed graph $(G,\sigma)$ 
is a mapping $c: V(G) \longrightarrow M_n$, and $c$ is proper 
if $c(v) \not= \sigma(e) c(w)$, for each edge $e=vw$. 
The smallest number $n$ such that $(G,\sigma)$ admits a proper $n$-coloring is the signed chromatic 
number of $(G,\sigma)$ 
and it is denoted by $\chi_\pm((G,\sigma))$. Obviously, this coloring is invariant under switching
and $\chi_\pm((G,\texttt{\bf 1})) = \chi(G)$.
Hence, this coloring concept satisfies conditions (A), (B) and (C).

There is also a direct relationship between $\chi_\pm$ and the pair $\gamma$  and $\gamma^*$,
which can be expressed as 
\begin{displaymath}
\chi_\pm ((G,\sigma))= \gamma ((G,\sigma))  + \gamma^* ((G,\sigma)),
\end{displaymath}
for every signed graph $(G,\sigma)$. 

The signed chromatic number has different but often similar bounds 
as $\gamma$ and $\gamma^*$. 
For instance, 
an upper bound using the chromatic number of the underlying graph is given by the following theorems, which follow from the fact that every coloring of $G$ 
induces a coloring of $(G,\sigma)$.  

\begin{theorem} [{\cite{choosable}}]
If $G$ is a simple graph, then $\chi_\pm (\pm G) = 2\chi (G) -1$.
\end{theorem}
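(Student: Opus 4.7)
The plan is to reduce a signed coloring of $\pm G$ to an ordinary coloring of $G$ via the absolute-value map. The key observation is structural: for every edge $vw \in E(G)$, the signed multigraph $\pm G$ contains both a positive and a negative edge between $v$ and $w$. Hence any $n$-coloring $c$ of $\pm G$ must simultaneously satisfy $c(v) \neq c(w)$ and $c(v) \neq -c(w)$; equivalently $|c(v)| \neq |c(w)|$. Conversely, if $|c(v)| \neq |c(w)|$ then neither constraint is violated (and in particular adjacent vertices cannot both receive the color $0$). Therefore, a map $c: V(G) \to M_n$ is an $n$-coloring of $\pm G$ if and only if the composition $|c|: V(G) \to \Z_{\geq 0}$ is a proper coloring of the unsigned graph $G$.

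With this equivalence in hand, the remaining task is to count absolute values available in $M_n$. For odd $n = 2k+1$, $|c|$ takes values in $\{0, 1, \dots, k\}$, giving $k+1$ possible absolute values; for even $n = 2k$, $|c|$ takes values in $\{1, 2, \dots, k\}$, giving only $k$. Thus an $n$-coloring of $\pm G$ exists iff $\chi(G) \leq k+1$ in the odd case, i.e.\ $n \geq 2\chi(G)-1$, and iff $\chi(G) \leq k$ in the even case, i.e.\ $n \geq 2\chi(G)$. Minimizing over both parities yields the lower bound $\chi_\pm(\pm G) \geq 2\chi(G)-1$. For the matching upper bound, take any proper coloring $\phi: V(G) \to \{0, 1, \dots, \chi(G)-1\}$ of $G$ and define $c(v) = \phi(v) \in M_{2\chi(G)-1}$; then $|c| = \phi$ is proper, so $c$ is a valid $(2\chi(G)-1)$-coloring of $\pm G$.

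There is no real obstacle here: once one notices that pairing a positive and a negative parallel edge forces $|c(v)| \neq |c(w)|$, the statement is essentially a bookkeeping exercise. The only mild subtlety is the asymmetry between $M_{2k+1}$ and $M_{2k}$ in the number of available absolute values, which is precisely what makes the optimal $n$ odd and produces the $-1$ term in the bound.
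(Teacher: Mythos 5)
Your proof is correct: the observation that the positive and negative parallel edges of $\pm G$ jointly force $c(v) \neq \pm c(w)$, i.e.\ $|c(v)| \neq |c(w)|$, reduces an $n$-coloring of $\pm G$ exactly to a proper coloring of $G$ by absolute values, and your count of $k+1$ available absolute values in $M_{2k+1}$ versus $k$ in $M_{2k}$, together with the explicit coloring $c = \phi$ taking values in $\{0,1,\dots,\chi(G)-1\} \subseteq M_{2\chi(G)-1}$, settles both bounds. The survey states this theorem without proof, citing Schweser and Stiebitz, but your absolute-value argument is precisely the standard one underlying the cited result, so nothing further is needed.
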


Since every signed graph $(G,\sigma)$ is a subgraph of $\pm G$, it follows that $2\chi (G) -1$
is an upper bound for the signed chromatic number of a signed graph $(G,\sigma)$. However, 
Máčajová, Raspaud and Škoviera proved that the bound is attained by a signed simple graph.

\begin{theorem} [{\cite{ModIntro}}] \label{MaxChrom}
If $(G,\sigma)$ is a signed simple graph, then
$\chi_\pm ((G,\sigma)) \leq 2\chi(G) -1$.
Furthermore, this bound is sharp.
\end{theorem}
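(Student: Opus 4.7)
The plan is to derive the upper bound by embedding an optimal proper coloring of the underlying graph $G$ into the signed palette, and then to address sharpness by exhibiting a simple signed graph that saturates the inequality.

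For the upper bound, set $k = \chi(G)$ and fix a proper $k$-coloring $c \colon V(G) \to \{0, 1, \dots, k-1\}$. Since $\{0, 1, \dots, k-1\} \subseteq M_{2k-1} = \{0, \pm 1, \dots, \pm(k-1)\}$, it suffices to check the signed-coloring rule $c(u) \neq \sigma(e)\, c(v)$ on every edge $e = uv$. On a positive edge this reduces to $c(u) \neq c(v)$, immediate from properness. On a negative edge the rule becomes $c(u) + c(v) \neq 0$; since both colors are non-negative, their sum vanishes only when $c(u) = c(v) = 0$, again excluded by properness. Hence $\chi_\pm((G,\sigma)) \leq 2k - 1$.

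The second half, sharpness, requires producing a \emph{simple} signed graph for which no palette of size $2\chi(G) - 2$ suffices. For $\chi(G) = 2$ the unbalanced even cycle already does the job: in any hypothetical coloring by $M_2 = \{\pm 1\}$ every positive edge forces adjacent colors to be opposites while the negative edge forces them to agree, so propagating around the cycle produces a parity contradiction; the color $0 \in M_3$ breaks this chain, giving $\chi_\pm = 3 = 2\chi(C_{2n}) - 1$.

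For larger $\chi(G)$ I would follow M\'a\v{c}ajov\'a, Raspaud and \v{S}koviera and amalgamate such ``parity traps'' with chromatic-forcing gadgets---for instance, by joining an unbalanced $C_4$ to a sufficiently large clique, or by iteratively attaching unbalanced cycles to a $K_k$-backbone---so that any valid signed coloring is forced to occupy each of the $k-1$ positive shells, each of the $k-1$ negative shells, and additionally the color $0$. The main obstacle is the lower-bound verification for such a construction: one must rule out every possible economy of colors, which reduces to a careful case analysis on where $0$ can sit and which antipodal pairs $\{+i, -i\}$ can be collapsed into a single shell. This is where I expect the delicate combinatorial work to concentrate.
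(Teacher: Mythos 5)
Your upper-bound argument is correct and is precisely the argument the survey itself indicates just before the theorem (``any unsigned coloring of an underlying graph is also a coloring of any corresponding signed graph''): placing an optimal proper coloring of $G$ on the nonnegative colors $\{0,1,\dots,k-1\}\subseteq M_{2k-1}$ satisfies $c(u)\neq\sigma(e)c(v)$ for positive edges by properness and for negative edges because $c(u)+c(v)=0$ would force $c(u)=c(v)=0$. Your $k=2$ sharpness witness, the unbalanced even circuit, is also correct and complete: the parity propagation argument is exactly right, and $\chi_\pm=3=2\chi(C_{2n})-1$.

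The genuine gap is sharpness for $\chi(G)=k\geq 3$, which you only sketch and explicitly defer, and your proposed gadget provably fails. If you join an unbalanced $C_4$ to $K_k$, the result has order $k+4$ and chromatic number $k+2$; but for any simple signed graph a greedy coloring shows $\chi_\pm((G,\sigma))\leq |V(G)|$, since each colored neighbor forbids exactly one color of $M_n$ (compare the survey's chain $\chi_\pm\leq\chi^l_\pm\leq col\leq\Delta(G)+1$). Hence this graph has $\chi_\pm\leq k+4<2(k+2)-1$ whenever $k\geq 2$, strictly below the bound; the same order bound shows any sharp simple example with $\chi(G)=k$ needs at least $2k-1$ vertices, so no small clique-plus-gadget construction can work and the lower-bound verification you postpone is the entire substance of the theorem's second half. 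Note also that in the survey's own framing you missed a one-line finish: the theorem is stated for \emph{loopless} signed graphs, multiple edges allowed, and the Schweser--Stiebitz theorem quoted immediately before it gives $\chi_\pm(\pm G)=2\chi(G)-1$, so $\pm K_k$ witnesses equality for every $k$ at once. The reason the survey nonetheless credits \cite{ModIntro} is that M\'a\v{c}ajov\'a, Raspaud and \v{S}koviera attain the bound with \emph{simple} signed graphs; under that stronger reading your proof is incomplete exactly where you flagged it, and under the weaker multigraph reading the needed example was already available from the stated results.
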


Another bound can be regarded as an extension of the well known characterization of bipartite graphs.

\begin{lemma}  [{\cite{ModIntro}}]
A signed graph $(G,\sigma)$ is antibalanced if and only if $\chi_\pm ((G,\sigma)) \leq 2$. 
\end{lemma}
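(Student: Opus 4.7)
The plan is to exploit the small color set $M_2 = \{+1,-1\}$ directly: the coloring condition $c(v)\neq \sigma(e)c(w)$ becomes a two-case rule that exactly encodes antibalanced structure. I will prove both implications by translating back and forth between a $2$-coloring and the bipartition associated with an antibalanced signed graph via Theorem \ref{character_balanced} and the equivalence $(G,\sigma)\sim(G,\texttt{\bf -1})$.

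For the forward direction, I would first note that $\chi_\pm$ is invariant under resigning (this is recorded in the preceding discussion via $\chi_\pm=\gamma+\gamma^*$). Hence it suffices to exhibit a $2$-coloring for $(G,\texttt{\bf -1})$ itself. I would take the constant map $c\equiv +1$ with values in $M_2$; for every edge $e=vw$ we have $\sigma(e)c(w)=(-1)(1)=-1\neq 1=c(v)$, so $c$ is a valid $2$-coloring and $\chi_\pm((G,\sigma))\leq 2$.

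For the converse, I would start from a coloring $c\colon V(G)\to\{+1,-1\}$ and set $A=c^{-1}(+1)$, $B=c^{-1}(-1)$. A brief case analysis on each edge $e=vw$ gives: if both endpoints lie in $A$ then $1\neq\sigma(e)\cdot 1$ forces $\sigma(e)=-1$; symmetrically for both endpoints in $B$; and if $e\in\partial(A)$ then $1\neq\sigma(e)\cdot(-1)$ forces $\sigma(e)=+1$. Thus the negative edges of $(G,\sigma)$ are exactly $E(G[A])\cup E(G[B])$ and the positive edges are exactly $\partial(A)$. Now I would apply a resigning at every vertex of $A$: edges inside $A$ are flipped twice (unchanged), edges inside $B$ are unaffected, and edges in $\partial(A)$ are flipped once. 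The resulting signature assigns $-1$ to every edge, so $(G,\sigma)\sim(G,\texttt{\bf -1})$, proving $(G,\sigma)$ is antibalanced.

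I do not anticipate a serious obstacle here: the statement is essentially a reformulation, and the proof hinges only on correctly reading the sign constraint in the two possible agreement/disagreement patterns of $\pm 1$. The only step that must be stated carefully is the effect of resigning on the three types of edges (within $A$, within $B$, across $\partial(A)$), which parallels the partition characterization of balance in Theorem \ref{character_balanced}.
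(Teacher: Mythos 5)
Your proof is correct and is essentially the standard argument: the survey states this lemma without proof (citing \cite{ModIntro}), and your case analysis is exactly the antibalanced analogue of the Harary partition in Theorem \ref{character_balanced}, with the resigning at $A$ carried out explicitly. The only point you gloss over is the degenerate case $\chi_\pm((G,\sigma))=1$, where the coloring takes values in $M_1=\{0\}$ rather than $\{\pm 1\}$; but this forces $E(G)=\emptyset$, which is vacuously antibalanced, so no real gap remains.
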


Important for the study of choosability of signed graphs, a topic which will be introduced later in this section, is the notion of degeneracy. A graph $G$ is called $k$-degenerate if every subgraph of $G$ has a vertex of degree at most $k$. As in the case for graphs, this 
property can be used to prove an upper bound for the signed chromatic number of a signed graph. 
If a graph is $k$-degenerate, then there is an ordering $v_1, v_2, \dots , v_n$ of its vertices such that for every $i \in \{2, \dots,n\}$ the vertex $v_i$ has at most $k$ neighbors in $\{v_1, \dots , v_{i-1}\}$. 
Now, greedy coloring yields the following lemma.

\begin{lemma}  [{\cite{ModIntro}}]
If a graph $G$ is $k$-degenerate, then $\chi_\pm((G,\sigma)) \leq k + 1$.
\end{lemma}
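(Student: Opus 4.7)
The plan is to carry out the greedy-coloring argument hinted at in the paragraph preceding the lemma, now with a palette that respects the sign-inversion needed for signed colorings.

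First I would unpack the $k$-degeneracy of $(G,\sigma)$. By repeatedly deleting a vertex of minimum degree from the remaining subgraph, I obtain an ordering $v_1, v_2, \ldots, v_n$ of $V(G)$ such that, for every $i \geq 2$, $v_i$ has at most $k$ neighbors in $\{v_1, \ldots, v_{i-1}\}$ (edges counted with multiplicity). This is exactly the ordering mentioned in the text preceding the lemma, and it depends only on the underlying graph $G$, not on $\sigma$.

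Next I would fix the color palette $M_{k+1}$, which by definition has exactly $k+1$ elements and, crucially, is closed under negation: $-M_{k+1} = M_{k+1}$. I would then color the vertices in the order $v_1, v_2, \ldots, v_n$, greedily. When it is $v_i$'s turn, each already-colored neighbor $v_j$ (with $j < i$) joined to $v_i$ by an edge $e$ imposes the single constraint $c(v_i) \neq \sigma(e)\, c(v_j)$. Because $M_{k+1}$ is closed under negation, the value $\sigma(e)\, c(v_j)$ lies in $M_{k+1}$, so this constraint excludes exactly one color from the palette. Since at most $k$ edges to earlier vertices exist, at most $k$ colors are forbidden, and at least one of the $k+1$ colors in $M_{k+1}$ is available for $v_i$.

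Iterating gives a proper signed $(k+1)$-coloring, hence $\chi_\pm((G,\sigma)) \leq k+1$. The only point where the signed setting intervenes, as opposed to the classical greedy argument, is the use of a palette closed under negation so that $\sigma(e)\,c(v_j)$ is always a genuine (palette-internal) restriction on $c(v_i)$; this is the mild subtlety that replaces a potential obstacle, but since $M_{k+1}$ is designed precisely to have this property, the argument goes through without further difficulty.
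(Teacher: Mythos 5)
Your proof is correct and takes essentially the same route as the paper, which explicitly indicates that the lemma follows from the degeneracy ordering $v_1, \dots, v_n$ (each $v_i$ having at most $k$ neighbors among $v_1, \dots, v_{i-1}$) combined with greedy coloring. Your added observation that $M_{k+1}$ is closed under negation, so that each edge to an earlier vertex forbids exactly one palette color, is precisely the signed-specific detail that makes the classical greedy argument go through.
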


We recall that the vertex arboricity $a(G)$ of a graph $G$, is the minimum number of subsets into which $V(G)$ can be partitioned so that each set induces a forest. Similarly, the edge arboricity of a graph $G$, denoted by $a'(G)$, is the minimum number of forests into which its edge set can be partitioned. An acyclic coloring of a graph is a coloring in which every two color classes induce a forest and $\chi_a(G)$ denotes the acyclic chromatic number of a graph $G$. With these notions, one can describe several
upper bounds for $\chi_{\pm}$ for specific classes of signed simple graphs.

\begin{proposition} [{\cite{ModIntro}}]
If $(G,\sigma)$ is a signed simple graph, then the following statements are true. 
\renewcommand{\labelenumi}{(\roman{enumi})}
\begin{enumerate} 
\item If $G$ is $K_4$-minor-free, then $\chi_\pm((G,\sigma))\leq 3$.
\item If $G$ is the union of two forests (i.e. $a'(G)\leq 2$), then $\chi_\pm((G,\sigma))\leq 4$.
\item If $a(G)\leq k$, then $\chi_\pm((G,\sigma))\leq 2k$.
\item $\chi_\pm((G,\sigma))\leq \chi_a(G)$.
\end{enumerate}
\end{proposition}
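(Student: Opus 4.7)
My plan is to derive each item from one of the two bounds just established in this subsection: the degeneracy bound $\chi_\pm((G,\sigma))\leq k+1$ for $k$-degenerate signed graphs, and the fact that every antibalanced signed graph — and in particular every signed forest — satisfies $\chi_\pm\leq 2$. For (i), I would note that every $K_4$-minor-free graph has a vertex of degree at most $2$, a property that is hereditary, so $G$ is $2$-degenerate and the degeneracy lemma yields $\chi_\pm((G,\sigma))\leq 3$. For (ii), if $G$ is a union of two forests then every subgraph $H$ with at least two vertices satisfies $|E(H)|\leq 2(|V(H)|-1)$, so the average degree in $H$ is strictly less than $4$; hence $G$ is $3$-degenerate and the degeneracy lemma gives $\chi_\pm((G,\sigma))\leq 4$.

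For (iii) and (iv), I would use a palette construction that glues together signed $2$-colorings of induced forests. In (iii), fix a partition $V(G)=V_1\cup\cdots\cup V_k$ with each $G[V_i]$ a forest. Since each induced signed forest is antibalanced, it admits a signed $2$-coloring, which I assign the palette $\{+i,-i\}$. An edge joining distinct classes $V_i,V_j$ has endpoints with $|c(v)|\neq|c(w)|$, so the signed-coloring condition $c(v)\neq\sigma(e)c(w)$ holds automatically for both possible signs of $\sigma(e)$. The total palette $\{\pm 1,\ldots,\pm k\}=M_{2k}$ has $2k$ colors, giving $\chi_\pm((G,\sigma))\leq 2k$. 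For (iv), I would apply the same idea to an acyclic $k$-coloring $V_1,\ldots,V_k$ of $G$ with $k=\chi_a(G)$, but now pair the classes as $(V_1,V_2),(V_3,V_4),\ldots$, leaving the last class unpaired if $k$ is odd. By the definition of an acyclic coloring, each $G[V_{2i-1}\cup V_{2i}]$ is a forest, hence antibalanced, and admits a signed $2$-coloring with palette $\{+i,-i\}$. If $k$ is odd, color the leftover (independent) class with $0$; any edge from a $0$-colored vertex ends at a nonzero color, so the coloring condition holds there as well. Edges between different pairs are handled by absolute-value comparison just as in (iii). The palette used has exactly $k$ signed colors, proving $\chi_\pm((G,\sigma))\leq\chi_a(G)$.

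The delicate step is the pairing in (iv): it must both induce a forest on each pair (so that a signed $2$-coloring of the pair is available) and avoid interference across pairs in the global palette. The acyclic property is precisely what supplies the first requirement, while the disjoint-absolute-values palette trick handles the second. By contrast, parts (i)–(iii) reduce essentially immediately to the two preceding lemmas, so I expect no substantial difficulty there.
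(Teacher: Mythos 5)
Your proof is correct and takes essentially the route the paper intends: parts (i)--(ii) reduce to the preceding degeneracy lemma via the standard facts that simple $K_4$-minor-free graphs are $2$-degenerate and unions of two forests are $3$-degenerate, while parts (iii)--(iv) use exactly the construction of \cite{ModIntro}, namely gluing signed $2$-colorings of the antibalanced induced signed forests on disjoint palettes $\{+i,-i\}$, with the classes of an acyclic coloring paired off (each pair inducing a forest) and the leftover independent class colored $0$ when $\chi_a(G)$ is odd. No gaps.
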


The most fundamental result in \cite{ModIntro} is a signed version of the famous theorem of Brooks, 
which relates the signed chromatic number to the maximum degree of a graph.
The two extremal cases in the original theorem, the complete graphs and the odd circuits, carry over into the version for signed graphs as the balanced signed graphs and the balanced odd circuits. However, it is interesting that for signed graphs there is a third extremal case, the even unbalanced circuits.

\begin{theorem}[{\cite{ModIntro}}] \label{Brooks_MRS}
Let $(G,\sigma)$ be a connected signed simple graph. If $(G,\sigma)$ is not a balanced complete
graph, a balanced odd circuit, or an unbalanced even circuit, then $\chi_\pm((G,\sigma)) \leq \Delta(G)$.
\end{theorem}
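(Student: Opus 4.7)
The plan is to adapt Lovász's proof of the classical Brooks theorem. Two ingredients transfer smoothly: first, $|M_\Delta|=\Delta$, so any vertex with at most $\Delta-1$ already-coloured neighbours can still receive a colour by a greedy step; second, Zaslavsky's spanning-tree proposition, which lets us resign so that the edges of any chosen spanning tree become positive. I would first dispose of the easy reductions. If $G$ is not $\Delta$-regular, pick a vertex $u$ of degree $<\Delta$, list the other vertices in a BFS order from $u$, and colour greedily in reverse: every non-root vertex has an uncoloured BFS parent and hence at most $\Delta-1$ forbidden colours, and $u$ itself has fewer than $\Delta$ neighbours in total. The case $\Delta=2$ reduces to a case check on signed circuits (the non-exceptional ones are either balanced even circuits or antibalanced, and hence $\chi_\pm\le 2$). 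If $G$ is $\Delta$-regular but has a cut vertex $u$, each block through $u$ is strictly smaller and has degree at $u$ strictly less than $\Delta$; a block-by-block coloring, adjusted at $u$ via the sign-respecting symmetries of $M_\Delta$, reduces to the $2$-connected case.

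The core argument is when $G$ is $2$-connected and $\Delta$-regular with $\Delta\ge 3$. If $G\ne K_{\Delta+1}$, Lovász's lemma produces $u,v,w\in V(G)$ with $vw\notin E(G)$, $uv,uw\in E(G)$, and $G-\{v,w\}$ connected. Choose a spanning tree of $G$ containing the edges $uv$ and $uw$ and resign so every tree edge is positive; in particular $\sigma(uv)=\sigma(uw)=1$. Now assign $v$ and $w$ the same colour $\alpha$, order the remaining vertices by a reverse BFS from $u$ in $G-\{v,w\}$ (so $u$ is coloured last), and colour greedily. Each intermediate vertex has an uncoloured BFS parent and so at most $\Delta-1$ forbidden colours; when we reach $u$, all $\Delta$ of its neighbours are coloured, but $\sigma(uv)c(v)=\alpha=\sigma(uw)c(w)$, so only $\Delta-1$ distinct colours are forbidden and a valid choice remains in $M_\Delta$.

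The hardest case, and the main obstacle, is $G=K_{\Delta+1}$ with $\sigma$ unbalanced, because Lovász's lemma fails when every two vertices are adjacent. The rescue is a negative triangle, whose existence I would first establish: if every triangle of a signed complete graph were positive then, fixing $u$, one obtains $\sigma(vw)=\sigma(uv)\sigma(uw)$ for all other $v,w$, so resigning at the neighbours $x$ of $u$ with $\sigma(ux)=-1$ produces the all-positive signature, contradicting unbalance. Pick a negative triangle $uvw$ and resign so $\sigma(uv)=\sigma(uw)=1$; the triangle-sign invariant then forces $\sigma(vw)=-1$. Colour $v$ and $w$ with a common \emph{nonzero} colour $\alpha$ (permissible because the constraint $c(v)\ne-c(w)$ becomes $\alpha\ne-\alpha$, i.e.\ $\alpha\ne 0$), order the remaining vertices ending at $u$, and colour greedily. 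The analysis at $u$ is identical to the previous case: the contributions of $v$ and $w$ to its forbidden set coincide, leaving $u$ with at most $\Delta-1$ distinct forbidden colours and therefore a free choice in $M_\Delta$.
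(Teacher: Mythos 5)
Your overall architecture is sound and is essentially the route of M\'a\v{c}ajov\'a, Raspaud and \v{S}koviera (this survey only cites \cite{ModIntro} without reproducing the proof). The two signed ingredients you isolate are exactly the right ones and you use them correctly: since $vw\notin E(G)$ in the Lov\'asz configuration, resigning a spanning tree through $uv,uw$ lets $v,w$ share an arbitrary colour whose forbidden contributions at $u$ coincide; and in the unbalanced $K_{\Delta+1}$ case your existence proof for a negative triangle is correct, as is the observation that the common colour must now be \emph{nonzero} because the negative edge $vw$ forbids only $c(v)=-c(w)$. The $\Delta=2$ case is also fine (both non-exceptional circuit types are antibalanced, so $\chi_\pm\leq 2$).

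The genuine gap is the cut-vertex reduction. The ``sign-respecting symmetries of $M_\Delta$'' are the permutations $\pi$ of $M_\Delta$ with $\pi(-x)=-\pi(x)$, and when $\Delta$ is odd every such $\pi$ fixes the colour $0$ (from $\pi(0)=-\pi(0)$), so the action on $M_\Delta$ has two orbits, $\{0\}$ and $M_\Delta\setminus\{0\}$. Consequently you cannot always match the lobes at $u$: if one lobe-colouring ends with $c(u)=0$ and another with $c(u)\neq 0$, no symmetry reconciles them, and the greedy bound does not rule this out. Concretely, when $u$ has exactly two lobes with degrees $\Delta-1$ and $1$ at $u$, the greedy run in the first lobe forbids up to $\Delta-1$ colours at $u$, which for $\Delta=3$ may be precisely $\{1,-1\}$, leaving only $\{0\}$ available, while the run in the second lobe may forbid precisely $0$. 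So the step as stated can fail for odd $\Delta$; you need an extra argument --- e.g.\ show that each lobe admits a colouring with $c(u)$ in a prescribed orbit (recolouring the penultimate vertex when necessary), or set up a proper induction over the block tree as in \cite{ModIntro} (or the DFS-based proof of \cite{DFS}). For even $\Delta$ the signed permutations act transitively on $M_\Delta$ and your gluing is fine; the rest of the proof needs no repair.
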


It is proved in \cite{DFS} that depth-first search and greedy coloring can be used to find a proper coloring of connected signed graphs 
$(G,\sigma)$ using at most $\Delta(G)$ colors, provided $(G,\sigma)$ is different from the above mentioned extremal cases. 
Zaj{\polhk{a}}c \cite{zajc2018Short} proved a more general version of Brook's Theorem which also implies Theorem \ref{Brooks_MRS}.
Another important theorem that has its pendant in the theory of graph coloring is the five color theorem for planar signed graphs.

\begin{theorem}[{\cite{ModIntro}}] \label{MRS_Steinberg}
Let $(G,\sigma)$ be a planar signed simple graph, then $\chi_\pm((G,\sigma))\leq 5$ . Furthermore,
\renewcommand{\labelenumi}{(\roman{enumi})}
\begin{enumerate} 
\item if $G$ is triangle-free, then $\chi_\pm((G,\sigma))\leq 4$, and
\item if $G$ has girth at least 5, then $\chi_\pm((G,\sigma))\leq 3$.
\end{enumerate}
\end{theorem}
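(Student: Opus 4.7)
My plan is to prove the three statements starting from the strongest girth hypothesis and working back to the main bound. The common skeleton is to take a minimum counterexample (in $|V(G)|+|E(G)|$), locate a low-degree vertex or small reducible configuration via Euler's formula, delete it, and extend a coloring of the smaller signed graph using the available colors in $M_n$.

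For part (i), I would invoke the standard Euler bound $|E(H)| \leq 2|V(H)|-4$ on every subgraph $H$ of the triangle-free planar graph $G$. This forces the average degree of $H$ below $4$, so some vertex of $H$ has degree at most $3$. Iterating, $G$ is $3$-degenerate, and the degeneracy lemma stated just before the theorem yields $\chi_\pm((G,\sigma))\leq 4$.

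For part (ii), with girth at least $5$, the analogous Euler bound is $|E(H)| \leq \tfrac{5}{3}(|V(H)|-2)$, which only gives $3$-degeneracy and hence the weaker bound $\chi_\pm((G,\sigma))\leq 4$; the degeneracy lemma alone is not enough. I would therefore switch to a discharging argument: assign each vertex initial charge $d_G(v)-4$ and each face of length $\ell$ initial charge $2\ell-8$, redistribute charge from long faces to their incident degree-$2$ and degree-$3$ vertices, and conclude that a minimum counterexample must contain a reducible configuration, most plausibly a vertex of degree $2$ or two adjacent vertices of degree $3$ sharing a long face. One then removes the configuration, colors the rest by minimality in $M_3=\{0,\pm 1\}$, and extends. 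The main obstacle is precisely this extension step, because the self-inverse color $0$ forbids a neighbor from being colored $0$ independently of the edge sign; one has to exploit the freedom to resign and a careful case analysis of the local signature to avoid the last available color being blocked.

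For the main bound $\chi_\pm((G,\sigma))\leq 5$, Euler's formula gives a vertex $v$ with $d_G(v)\leq 5$. If $d_G(v)\leq 4$, a coloring of $G-v$ extends immediately, since the neighbors of $v$ forbid at most four of the five colors of $M_5$. If $d_G(v)=5$, I would adapt the classical Kempe-chain argument to the signed setting: label the neighbors $v_1,\dots,v_5$ in planar cyclic order with forbidden values $a_i=\sigma(vv_i)c(v_i)$, assume $\{a_1,\dots,a_5\}=M_5$, and for two non-adjacent forbidden colors $\alpha,\beta$ consider the ``signed $\{\alpha,\beta\}$-component'' of $v_i$, meaning the maximal connected subgraph $H_{\alpha\beta}$ whose edges $uw$ carry colors $\{c(u),c(w)\}$ forced to lie in $\{\pm\alpha,\pm\beta\}$ consistently with $\sigma(uw)$. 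Swapping along $H_{\alpha\beta}$ is a valid recoloring, and, provided $v_j$ lies in a different component, this frees the color $a_i$ at $v$; if instead the two chains one tries to swap both connect their respective pairs, planarity is violated exactly as in Heawood's proof. The main obstacle here is setting up the signed Kempe chains so that the swap remains a proper signed coloring, and handling the self-inverse color $0$, for which the naive ``swap $\{0,\alpha\}$'' is not a symmetry of $M_5$ and therefore requires a separate subcase based on whether $0$ appears among the $a_i$.
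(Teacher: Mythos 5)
Part (i) of your proposal is correct and is exactly the intended argument: triangle-free planar graphs satisfy $|E(H)|\leq 2|V(H)|-4$ for every subgraph, hence are $3$-degenerate, and the degeneracy lemma stated just before the theorem gives $\chi_\pm((G,\sigma))\leq 4$. (Note that the survey itself contains no proofs here; the theorem is quoted from \cite{ModIntro}, so your attempt has to be measured against the proofs in that source.) The other two parts, however, have genuine gaps. For the bound $\chi_\pm\leq 5$, your signed Kempe-chain plan breaks at a structural point you do not address: a recoloring of a chain component stays proper only if the permutation of colors commutes with inversion $x\mapsto -x$ (otherwise a negative edge from a vertex colored $b$ to one colored $-b$ is violated after the swap). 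Since $0$ is the unique self-inverse element of $M_5$, \emph{every} admissible swap fixes $0$, so your ``separate subcase'' for $0$ cannot be filled, and the only nontrivial admissible swap on $M_5$ exchanges the whole classes $\{\pm1\}$ and $\{\pm2\}$. Consequently Heawood's endgame is unavailable: the two chains you would need, for the pairs $(1,2)$ and $(-1,-2)$, live inside the \emph{same} induced subgraph (the vertices with nonzero colors), may share vertices, and therefore produce no planarity contradiction when both pairs of neighbors of $v$ are joined by chains. One can check that the genuinely stuck configuration --- the components of the nonzero-colored subgraph connecting the $+1$-neighbor to the $+2$-neighbor and the $-1$-neighbor to the $-2$-neighbor --- is not excluded by your argument. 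For part (ii), deleting a degree-$2$ vertex extends trivially in $M_3$, but the configuration of two adjacent degree-$3$ vertices is \emph{not} reducible by deletion: each endpoint may be left with exactly one available color and these can clash across the connecting edge, exactly the obstacle you name without resolving. Worse, for the all-positive signature part (ii) contains the girth-$5$ case of Gr\"otzsch's theorem, which is known not to follow from naive delete-and-extend discharging; identification or Kempe-type steps are needed even in the unsigned case, so the missing extension step is not a technicality but the heart of the matter.

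The proofs in \cite{ModIntro} avoid both obstacles by structural partition theorems, using precisely the facts recorded immediately before the theorem in this survey. For $\chi_\pm\leq 5$: by Borodin's theorem \cite{AcyclicColorings} every planar graph is acyclically $5$-colorable, and the Proposition above gives $\chi_\pm((G,\sigma))\leq \chi_a(G)$; concretely, pair up the five acyclic color classes so that two pairs each induce a forest, color the $i$-th forest with $\{\pm i\}$ (forests are antibalanced, hence $2$-colorable by the stated lemma) and the leftover independent class with $0$. For girth at least $5$: by a theorem of Borodin and Glebov, the vertex set of a planar graph of girth at least $5$ partitions into an independent set $I$ and a set $F$ inducing a forest; color $I$ with $0$ and $F$ with $\{+1,-1\}$, again because a forest is antibalanced for any signature. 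In both cases the self-inverse color $0$ is handled cleanly by reserving it for an independent set, which is the key idea your proposal is missing.
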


Note, that it has been conjectured in {\cite{ModIntro} that the famous 4-color theorem 
can be generalized to planar signed simple graphs.
However, this conjecture is disproved, see Section \ref{Zhu}.

Theorem \ref{MRS_Steinberg} approximates Gr\"otzsch's Theorem \cite{Groetzsch}, which states that every triangle-free planar graph is 3-colorable. 
In this context, Erd\"{o}s raised the question (see problem 9.2 in \cite{Steinberg}) whether there exists
a constant $k$ such that every planar graph without cycles of length from $4$ to $k$ is 3-colorable? 
Hu and Li have studied this question for signed graphs in \cite{Hu_Lili_2019}. 

\begin{theorem} [\cite{Hu_Lili_2019}]
Let $(G,\sigma)$ be a planar signed simple graph. If $G$ does not contain a circuit of length $k$ for all $k \in \{4,5,6,7,8\}$, 
then $\chi_{\pm}((G,\sigma)) \leq 3$.   
\end{theorem}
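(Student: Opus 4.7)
The plan is to proceed by contradiction via a minimum counterexample combined with a discharging argument, in the spirit of the proofs of Steinberg-type theorems for ordinary planar graphs. Throughout I would identify $M_3$ with $\{-1,0,1\}$ and use that a $3$-coloring $c$ in the sense of Section~\ref{MRS} must satisfy $c(v) \neq \sigma(e)c(w)$ for every edge $e=vw$, so that each edge incident to an already coloured neighbour forbids exactly one colour from $M_3$. I would freely use Theorem~\ref{equ_classes} to resign at vertices during the reducibility analysis, since resignings do not change $\chi_\pm$.

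First I would suppose $(G,\sigma)$ is a counterexample with the fewest edges and derive structural restrictions. The baseline reduction is $\delta(G) \geq 3$: a vertex of degree at most two has at most two forbidden colours in $M_3$, so any $3$-coloring of $(G-v,\sigma|_{G-v})$ (obtained by minimality) extends to $v$. Next I would search for additional reducible configurations among $3$-vertices and triangles, such as two adjacent $3$-vertices, a triangle containing several $3$-vertices, a $3$-vertex lying on two triangles, and $3$-vertices whose incident signs form specific patterns. For each configuration I would delete a small vertex set, apply minimality to colour the remainder, and show that by a suitable sequence of resignings the colours forbidden on the deleted vertices cannot simultaneously exhaust~$M_3$.

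For the discharging phase I would invoke the hypothesis that $G$ has no cycle of length $4,5,6,7,8$, so every face of a plane embedding of $G$ is either a triangle or has length at least $9$. Assign the initial charges $\mu(x) = d(x)-4$ to each vertex $x$ and $\mu(f) = \ell(f)-4$ to each face $f$; Euler's formula then yields total charge $-8$. I would design redistribution rules sending positive charge from long faces and from vertices of degree at least $4$ to triangles and to $3$-vertices, and use the reducible configurations of the previous step to argue that every vertex and face finishes with non-negative charge, contradicting the total $-8$.

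The main obstacle will be the reducibility step. In unsigned Steinberg-type arguments a degree-$3$ vertex is easily handled because three forbidden colours still leave a fourth; here $|M_3|=3$, so a degree-$3$ vertex can genuinely have all of $M_3$ forbidden by its three neighbours, depending on the signature. The difficulty is therefore a careful case analysis of sign patterns at such vertices and at small clusters containing them, exploiting the ability to resign (which flips the signs on incident edges and negates that vertex's colour) in order to normalise the pattern and expose a free colour. A secondary difficulty is calibrating the discharging rules so that triangles, which are \emph{not} excluded by the girth hypothesis, are compensated by the faces of length at least $9$ that must lie nearby, and so that the relatively narrow gap between excluded lengths $\{4,\dots,8\}$ and the permitted length $9$ is just wide enough to make the accounting work.
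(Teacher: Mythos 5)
Since the paper under review is a survey, it states this theorem with a citation to \cite{Hu_Lili_2019} and contains no proof of it, so the comparison must be with the original argument of Hu and Li. Your skeleton --- minimal counterexample, $\delta(G)\ge 3$, initial charges $d(x)-4$ and $\ell(f)-4$ summing to $-8$ by Euler's formula, faces of length $3$ or at least $9$ --- is indeed the outer shell of that discharging proof. But as submitted, your proposal has a genuine gap: all of the mathematical content is deferred. Beyond the degree-at-most-$2$ reduction you exhibit no reducible configuration, state no discharging rules, and perform no final-charge verification; you yourself label the reducibility step ``the main obstacle.'' A plan that postpones exactly the two steps where such theorems are hard is not a proof, and for this theorem the postponed steps are long (the original occupies many pages of configuration analysis).

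Moreover, the one mechanism you propose for closing the reducibility step would fail. You hope that after deleting a small set, ``a suitable sequence of resignings'' will expose a free colour in $M_3$ at a $3$-vertex $v$. It cannot: resigning at $v$ flips every $\sigma(vw_i)$ and hence replaces the forbidden set $\{\sigma(vw_i)c(w_i)\}$ by its pointwise negation, which leaves a full forbidden set $\{-1,0,1\}$ invariant; resigning at a coloured neighbour $w_i$ negates $c(w_i)$ while flipping $\sigma(vw_i)$, leaving each forbidden colour $\sigma(vw_i)c(w_i)$ unchanged. So the extendability of a partial colouring is resigning-invariant, and delete-and-extend alone cannot handle $3$-vertices. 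The known proofs of Steinberg-type theorems (Borodin's unsigned result for excluded cycle lengths $4$--$9$, and Hu--Li's signed version for $4$--$8$) instead modify the smaller graph by \emph{identifying} suitable vertices on a face of length at least $9$ --- forcing colour coincidences that a mere restriction of $(G,\sigma)$ cannot force --- together with an analysis of strings of $3$-vertices on long faces; in the signed setting one must additionally resign to make the signatures compatible before identification and check that no circuit of length $4$ to $8$ and no unbalanced short structure is created. Your outline never mentions identification, and without it, or some substitute of comparable strength, the minimal-counterexample machinery cannot be closed.
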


\subsection{$(k,d)$-coloring of signed graphs} \label{KS}

In 1988, Vince \cite{CircularOrig} introduced $(k,d)$-coloring and the now called
circular chromatic number for graphs. Originally he introduced the circular chromatic number under the name star chromatic number, see \cite{CircularEquiv} for details. The circular chromatic number of a graph $G$
is denoted by $\chi_c(G)$, and it is defined as the infimum over all rational numbers $\frac {n}{k}$ } so that there is a mapping $\varphi$ from $V(G)$ into the cyclic group of integers modulo $n$, $\mathbb {Z} /n\mathbb {Z}$ (or just $\mathbb {Z}_n$) with the property that if $u$ and $v$ are adjacent vertices in $G$, then $\varphi(u)$ and $\varphi(v)$ are at distance of at least $k$ in $\mathbb {Z}_n$. 

Kang and Steffen \cite{Circular} extended the concept of $(k,d)$-coloring to signed graphs and extended these definitions to signed graphs as follows. 

For $x \in \mathbb{R}$ and a positive real number $r$, we denote by $[x]_r$, the remainder of $x$ divided by $r$, 
and define $|x|_r = \min\{[x]_r, [-x]_r\}$. Thus, $[x]_r \in [0,r)$ and $|x|_r=|-x|_r$.
For $a,b \in \mathbb {Z}$ and an integer $k \geq 2$, $|a-b|_k$ can be regarded as the distance of $a$ 
and $b$ in $\mathbb {Z}_k$. For two
positive integers $k$ and $d$ with $k \geq 2d$, a $(k,d)$-coloring of a signed graph $(G,\sigma)$ is a mapping $c: V(G) \longrightarrow \mathbb {Z}_k$ with the property, that $d\leq|c(v)-\sigma(e)c(w)|_k$ for each edge $e=vw$. 
The circular chromatic number $\chi_c((G,\sigma))$ of a signed graph $(G,\sigma)$ is the infimum over all $\frac{k}{d}$ so that $(G,\sigma)$ has a $(k,d)$-coloring. 
In the context of integer coloring, a $(k,1)$-coloring will also be called a $\mathbb{Z}_k$-coloring or a modular $k$-coloring. The minimum $k$ such that $(G,\sigma)$ has a $(k,1)$-coloring is called the cyclic chromatic number of $(G,\sigma)$, and it is denoted by $\chi((G,\sigma))$.

Similar to $\chi_{\pm}$, we have $\chi((G,\texttt{\bf 1})) = \chi(G)$ and $\chi_c((G,\texttt{\bf 1})) = \chi_c(G)$. 
Therefore, this concept naturally generalizes $(k,d)$-coloring of graphs to signed graphs. 
Like for other definitions of signed graph coloring, the circular chromatic number is unchanged under switching. Given a signed graph $(G,\sigma)$ with a $(k,d)$-coloring $c$, a switching of $(G,\sigma)$ at $X \subseteq V(G)$ 
together with a change of $c(u)$ to $-c(u)$ for each $u \in X$
results in an equivalent graph $(G,\sigma')$ with $(k,d)$-coloring $c'$.

\begin{proposition} [{\cite{Circular}}]
Let $k$ and $d$ be positive integers, $(G,\sigma)$ be a signed simple graph and $c$ be a $(k,d)$-coloring of $(G,\sigma)$. If $(G,\sigma)$ and $(G,\sigma')$ are equivalent, then there exists a $(k,d)$-coloring $c'$ of $(G,\sigma')$. Therefore, $\chi_c((G,\sigma))=\chi_c((G,\sigma'))$.
\end{proposition}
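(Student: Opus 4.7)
The plan is to reduce to the case of a single resigning operation and then exhibit an explicit $(k,d)$-coloring $c'$ of the resigned graph obtained from $c$ by negating the color at the pivoting vertex.

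First I would observe that equivalence of signed graphs is defined as a finite sequence of vertex-resignings, so by induction on the length of the sequence it suffices to treat the case where $(G,\sigma')$ is obtained from $(G,\sigma)$ by a single resigning at some vertex $u \in V(G)$. Given a $(k,d)$-coloring $c$ of $(G,\sigma)$, I would define $c' : V(G) \to \mathbb{Z}_k$ by $c'(v) = c(v)$ for $v \neq u$ and $c'(u) = -c(u)$ (where the negation is taken in $\mathbb{Z}_k$). The claim reduces to verifying $d \leq |c'(v) - \sigma'(e) c'(w)|_k$ for every edge $e = vw$ of $(G,\sigma')$.

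The verification splits into three cases. If $e$ is not incident to $u$, then $\sigma'(e) = \sigma(e)$ and the restricted values of $c'$ and $c$ agree, so nothing is to be checked. If $e = uw$ is a non-loop edge incident to $u$, then the resigning flips $\sigma(e)$, and
\[
c'(u) - \sigma'(e) c'(w) \;=\; -c(u) - (-\sigma(e)) c(w) \;=\; -\bigl(c(u) - \sigma(e) c(w)\bigr),
\]
so the key symmetry $|x|_k = |-x|_k$ gives $|c'(u) - \sigma'(e) c'(w)|_k = |c(u) - \sigma(e) c(w)|_k \geq d$. Finally, if $e$ is a loop at $u$, then resigning at $u$ leaves $\sigma(e)$ unchanged, and the constraint becomes $|-c(u) - \sigma(e)(-c(u))|_k = |c(u) - \sigma(e) c(u)|_k \geq d$, which holds by assumption on $c$.

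From this, the existence of $c'$ gives $\chi_c((G,\sigma')) \leq \chi_c((G,\sigma))$, and applying the same argument to a resigning sequence from $(G,\sigma')$ back to $(G,\sigma)$ yields the reverse inequality, hence equality. The only delicate point in the plan is the careful handling of loops: because a resigning does not change the sign of a loop at the pivot vertex (as emphasized in the introduction), the loop case must be checked separately; however, it works out cleanly since the expression $c(u) - \sigma(e)c(u)$ is also invariant up to sign under $c(u) \mapsto -c(u)$. No other routine computation presents a real obstacle.
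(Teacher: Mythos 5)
Your proposal is correct and matches the argument the paper itself sketches just before the proposition: reduce to a single resigning at a vertex $u$ and replace $c(u)$ by $-c(u)$, using $|x|_k = |-x|_k$ to verify the coloring condition. Your explicit treatment of loops (whose sign is unchanged under resigning at their vertex) and the induction on the length of the resigning sequence are careful elaborations of the same approach, not a different route.
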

Note, that if $(G,\sigma)$ has a $(k,d)$-coloring $c$, then there is an equivalent graph $(G,\sigma')$ with corresponding $(k,d)$-coloring $c'$ such that $c'(v)\in \{0,1,2,\dots,\left\lfloor \frac{k}{2}\right\rfloor\}$ for every $v\in V(G)$. This is easy to see by switching $(G,\sigma)$ at
every vertex $v$ with $c(v) > \left\lfloor \frac{k}{2}\right\rfloor$.

A very basic result on the circular chromatic number for graphs is that the infimum in its definition is actually a minimum. This is also true for signed graphs. 
In \cite{Circular} it is shown that the number
of used colors in a smallest $(k,d)$-coloring can be bounded by a function of the order of the graph.
Hence, the circular chromatic number is a minimum. Therefore, 
if $\chi_c((G,\sigma)) = \frac{k}{d}$, then $(G,\sigma)$ has a $(k,d)$-coloring.  

\begin{theorem} [{\cite{Circular}}]\label{infmin}
Let $(G,\sigma)$ be a signed simple graph on $n$ vertices, then 
$$\chi_c((G,\sigma)) =\text{ min} \left\{ \frac{k}{d}:(G,\sigma) \text{ has a } (k,d) \text{-coloring and } k\leq 4n \right\}.$$
\end{theorem}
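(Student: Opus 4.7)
The plan is to show that if $(G,\sigma)$ admits a $(k,d)$-coloring, then it admits a $(k',d')$-coloring with $k'/d'\le k/d$ and $k'\le 4n$. After normalizing to $\gcd(k,d)=1$, it suffices to prove that whenever $k/d=\chi_c((G,\sigma))$ and $\gcd(k,d)=1$, one already has $k\le 4n$.

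I would follow the Bondy--Hell/Vince template from the unsigned case, lifted to a signed double cover. Call an edge $e=vw$ \emph{tight} if $|c(v)-\sigma(e)c(w)|_k=d$, and for each tight edge record the oriented congruence $c(v)-\sigma(e)c(w)\equiv d\pmod k$ or $\sigma(e)c(w)-c(v)\equiv d\pmod k$. Because each vertex carries two signed aspects $c(v)$ and $-c(v)$, the natural state space is $V(G)\times\{+,-\}$, of size $2n$. Each tight edge yields arcs in an auxiliary digraph $\vec D$ on this state space: an arc from $(v,\epsilon)$ to $(w,\epsilon\sigma(e))$ labelled by a color shift of $\pm d$, encoding how colors propagate along tight edges through the switching function $\tau$. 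A standard perturbation argument, rotating the colors of vertices not reachable by tight arcs and invoking minimality of $k/d$, shows that in an optimal coloring every state of $\vec D$ lies on a directed cycle of length at most $2n$.

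Summing the $\pm d$ labels around such a cycle yields a congruence $sd\equiv\alpha\pmod k$ with $|s|\le 2n$, where $\alpha=0$ if the cycle is sign-preserving at its basepoint and $\alpha=2c(v)$ if it is sign-reversing. In the sign-preserving case, $\gcd(k,d)=1$ forces $k\mid s$, hence $k\le 2n$. In the sign-reversing case, traversing the cycle a second time yields a sign-preserving closed walk of length at most $4n$, and the same divisibility argument gives $k\le 4n$. The main obstacle I expect is the perturbation step: in the signed setting the permissible moves include both small rotations in $\mathbb{Z}_k$ and sign switches at individual vertices, and one must verify that a state missing from every tight directed cycle admits a perturbation that either strictly decreases $k/d$ or replaces $(k,d)$ by some $(k',d')$ with $k'<k$ and $k'/d'\le k/d$, contradicting the assumed optimality of $(k,d)$.
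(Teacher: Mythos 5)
Your skeleton --- reduce to showing that a $(k,d)$-coloring with $\gcd(k,d)=1$ and $k>4n$ can be improved, via a tight-edge digraph on the doubled state space $V(G)\times\{+,-\}$ and congruences around tight closed walks --- is a viable route, and your final arithmetic is sound: all tight arcs carry shift $+d$, so a sign-preserving tight cycle of length $\ell\leq 2n$ gives $\ell d\equiv 0 \pmod k$ with $\ell\geq 1$, whence $k\mid \ell$ and $k\leq 2n$; a sign-reversing tight walk from $(v,\epsilon)$ to $(v,-\epsilon)$, concatenated with its antipodal copy, gives $2\ell d\equiv 0\pmod k$ and $k\mid 2\ell\leq 4n$. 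Note, though, that this is not the argument indicated here for \cite{Circular}: the survey's route is to normalize by resigning so that all colors lie in $\{0,1,\dots,\lfloor k/2\rfloor\}$ (as remarked just before the theorem) and then bound the number of used colors in a smallest $(k,d)$-coloring by a function of $n$, a counting/contraction argument rather than a tight-cycle argument.

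The genuine gap is the perturbation lemma, which you name but do not prove, and it is precisely where the signed-specific difficulty lives. Concretely: shifting $c(v)$ by $+1$ moves the state $(v,+)$ by $+1$ and the state $(v,-)$ by $-1$, so you cannot rotate the $2n$ states independently as in the unsigned Bondy--Hell/Vince proof; the ``standard perturbation argument'' cannot be invoked verbatim. What rescues the approach --- and what your write-up is missing --- is that the antipodal map $(v,\epsilon)\mapsto (v,-\epsilon)$ is an arc-reversing involution of the tight digraph (an arc $(v,\epsilon)\to(w,\delta)$ with shift $+d$ corresponds to an arc $(w,-\delta)\to(v,-\epsilon)$ with shift $+d$), so a state lies on a tight directed cycle if and only if its antipode does, and per-vertex shifts are therefore consistent with the involution; one must carry out the topological-sort/rotation argument equivariantly with respect to this symmetry. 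A second, smaller defect is the circularity in your opening reduction: you assume a coloring realizing $k/d=\chi_c((G,\sigma))$, but attainment of the infimum is part of what is being proved. Your closing descent formulation (either strictly decrease the ratio, or keep the ratio while strictly decreasing $k$) is the correct fix, but you should make it the official structure of the proof and observe that termination of the descent, together with finiteness of $\{k/d : k\leq 4n\}$, simultaneously yields attainment and the bound; as written, the proposal records the right plan without establishing its key step.
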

For graphs, we have $\left\lceil \chi_c(G)\right\rceil =\chi(G)$, \cite{CircularOrig}. A similar result 
holds true for signed graphs as well.

\begin{theorem} [{\cite{Circular}}] \label{relation_cc}
Let $(G,\sigma)$ be a signed simple graph. Then $\chi((G,\sigma))-1\leq \chi_c((G,\sigma))\leq \chi((G,\sigma))$.
\end{theorem}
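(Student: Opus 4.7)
The upper bound $\chi_c((G,\sigma)) \leq \chi((G,\sigma))$ is immediate: the $(\chi((G,\sigma)),1)$-coloring of $(G,\sigma)$ guaranteed by the definition of $\chi$ realizes the ratio $\chi((G,\sigma))/1$ in the infimum defining $\chi_c$.

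For the lower bound, which I rearrange to $\chi((G,\sigma)) \leq \chi_c((G,\sigma)) + 1$, I would invoke Theorem~\ref{infmin} to fix integers $k \geq 2d \geq 2$ with $\chi_c((G,\sigma)) = k/d$ together with a $(k,d)$-coloring $c\colon V(G)\to\mathbb{Z}_k$. Setting $n = \lfloor k/d \rfloor + 1$, one has $n \leq k/d + 1 = \chi_c((G,\sigma))+1$, so it suffices to exhibit an $(n,1)$-coloring of $(G,\sigma)$. I would produce this as $c' = \phi \circ c$, where $\phi\colon \mathbb{Z}_k \to \mathbb{Z}_n$ is a carefully chosen map satisfying two properties: \textbf{oddness}, $\phi(-x) = -\phi(x)$ in $\mathbb{Z}_n$, and \textbf{distance compression}, $|x-y|_k \geq d \Rightarrow \phi(x) \neq \phi(y)$.

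Given such a $\phi$, positive edges are handled directly by distance compression: for a positive edge $vw$, $|c(v)-c(w)|_k \geq d$ gives $c'(v)\neq c'(w)$. For a negative edge $vw$, one rewrites $|c(v)+c(w)|_k = |c(v)-(-c(w))|_k \geq d$, so distance compression followed by oddness yields $c'(v) = \phi(c(v)) \neq \phi(-c(w)) = -\phi(c(w)) = -c'(w)$, as required. Hence $c'$ is a valid $(n,1)$-coloring, and $\chi((G,\sigma)) \leq n \leq \chi_c((G,\sigma))+1$.

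The main obstacle is the explicit construction of $\phi$, which amounts to partitioning $\mathbb{Z}_k$ into $n$ blocks of cyclic diameter strictly less than $d$, in a negation-symmetric fashion; $\phi$ is then the projection to block indices in $\mathbb{Z}_n$. Any block containing a fixed point of negation in $\mathbb{Z}_k$ (namely $0$, and also $k/2$ when $k$ is even) is forced to be itself symmetric under negation, while the remaining blocks pair up as $\{A,-A\}$. The self-symmetric blocks are to be assigned to the fixed points of negation in $\mathbb{Z}_n$ (the element $0$, and $n/2$ when $n$ is even), and each pair $\{A,-A\}$ to a pair $\{i,-i\}\subset\mathbb{Z}_n$. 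The inequality $k \leq (n-1)d + (d-1) < nd$, which follows from $n-1 = \lfloor k/d\rfloor$, gives the slack needed to fit $k$ elements into $n$ cyclic intervals of at most $d$ consecutive integers (each of cyclic diameter at most $d-1 < d$). The technical heart of the argument is a case analysis on the parities of $k$, $d$, and $n$, ensuring that the number of admissible self-symmetric blocks of sufficiently small diameter matches the number of fixed points of negation available in $\mathbb{Z}_n$; this parity bookkeeping is where the genuine combinatorics of the proof sits.
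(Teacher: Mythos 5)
The upper bound is fine, and your reduction of the lower bound to constructing a quotient map $\phi\colon \mathbb{Z}_k \to \mathbb{Z}_n$ is a reasonable framework, but the map you ask for does not exist in general, and the failure is exactly in the cases that matter. Oddness forces $\phi(0)$ to be a self-inverse element of $\mathbb{Z}_n$, and when $k$ is even it likewise forces $\phi(k/2)$ to be self-inverse (since $-k/2 = k/2$ in $\mathbb{Z}_k$). If $n$ is odd, the only self-inverse element of $\mathbb{Z}_n$ is $0$, so $0$ and $k/2$ must lie in the same fiber, whose cyclic diameter is then $k/2 \geq d$, contradicting distance compression. Thus no odd distance-compressing $\phi$ exists whenever $k$ is even and $n = \lfloor k/d \rfloor + 1$ is odd, and this cannot be dodged by choosing a different realization of $\chi_c$: the survey records that for every $t \geq 2$ there are signed graphs with $\chi((G,\sigma)) = t+1$ and $\chi_c((G,\sigma)) = t$. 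Take $t = 2$: such a graph has a $(4,2)$-coloring but no $(2,1)$-coloring, and \emph{every} representation of $\chi_c = 2$ has the form $(2d,d)$, i.e.\ $k$ even and $n = 3$ odd (for $k=4$, $d=2$, oddness forces $\phi(0) = \phi(2) = 0$, a fiber of diameter $2 \not< 2$). So precisely on the tight examples your construction has no instance, and the "parity bookkeeping" you defer to the end is not a technicality — it is unsolvable as posed. This is a genuine gap.

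The repair uses a degree of freedom you left unused: resigning. Since $\chi$ is invariant under equivalence, one may first resign (as noted in the paper just before Theorem~\ref{infmin}) so that the given $(k,d)$-coloring $c$ takes values in $\{0,1,\dots,\lfloor k/2 \rfloor\}$; this is essentially the route of the source paper. Then the shifted compression $c'(v) = \bigl\lfloor (c(v) + \lfloor d/2 \rfloor)/d \bigr\rfloor$, read in $\mathbb{Z}_n$, works and need not be odd: each fiber consists of at most $d$ consecutive integers inside the half-range $[0, \lfloor k/2 \rfloor]$, so a positive edge inside a fiber would have $|c(v)-c(w)|_k \leq d-1$, impossible; and since $2\max c' \leq n$, a negative edge with $c'(v) + c'(w) \equiv 0 \pmod{n}$ would force either $c'(v)=c'(w)=0$, whence $c(v)+c(w) \leq d-1$, or $c'(v)+c'(w) = n$, whence $c(v)+c(w) \geq nd - d > k-d$ (using $k < nd$) while $c(v)+c(w) \leq k$; in either case $|c(v)+c(w)|_k < d$, contradicting the $(k,d)$-coloring. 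Alternatively, you can keep your global quotient framework but weaken oddness to what negative edges actually require: $\phi(x) = -\phi(y)$ must imply $|x+y|_k < d$, i.e.\ $\phi^{-1}(i) \cup \bigl(-\phi^{-1}(-i)\bigr)$ has cyclic diameter less than $d$ for every $i$. Unlike oddness, this is satisfiable in the obstructed cases, e.g.\ for $k=4$, $d=2$, $n=3$ take $\phi(0)=0$, $\phi(1)=\phi(2)=1$, $\phi(3)=2$. The moral is that oddness imports the full resigning symmetry into $\phi$, whereas the coloring conditions only constrain fibers of complementary colors against each other.
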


Note that, unlike for graphs, the difference between $\chi_c$ and $\chi$ can be 1. Actually, this case is equivalent to the existence of a different coloring.

\begin{theorem} [{\cite{Circular}}]
Let $(G,\sigma)$ be a signed simple graph with $\chi((G,\sigma))=t+1$ for a positive integer $t$. Then, $\chi_c((G,\sigma))=t$ if and only if $(G,\sigma)$ has a $(2t,2)$-coloring.
\end{theorem}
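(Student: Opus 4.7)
The plan is to prove the two implications separately. The direction $(\Leftarrow)$ is immediate from the definitions: a $(2t,2)$-coloring witnesses $\chi_c((G,\sigma)) \leq 2t/2 = t$, while Theorem~\ref{relation_cc} gives the matching lower bound $\chi_c((G,\sigma)) \geq \chi((G,\sigma)) - 1 = t$.

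For the direction $(\Rightarrow)$, I will start by invoking Theorem~\ref{infmin}, which upgrades the infimum to a minimum: there exist positive integers $k,d$ with $k/d = t$ (hence $k = td$) and a $(k,d)$-coloring $c$ of $(G,\sigma)$. The case $d = 1$ is ruled out immediately, since a $(t,1)$-coloring would give $\chi((G,\sigma)) \leq t$, contradicting $\chi((G,\sigma)) = t+1$. Thus $d \geq 2$, and the core task is to convert $c$ into a $(2t,2)$-coloring. For this I plan to partition $\mathbb{Z}_{td}$ into $2t$ arcs of alternating sizes $1$ and $d-1$: for $i = 0, 1, \ldots, t-1$, set $A_{2i} = \{id\}$ and $A_{2i+1} = \{id+1, \ldots, (i+1)d - 1\}$, and then define $c'(v) = j$ whenever $c(v) \in A_j$.

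Two properties make the verification work: (a) any two cyclically adjacent arcs $A_j, A_{j+1}$ together form $d$ consecutive elements of $\mathbb{Z}_{td}$, and (b) the partition is invariant under negation, $-A_j = A_{-j \bmod 2t}$. For a positive edge $vw$, property (a) forces $c(v)$ and $c(w)$ to lie in arcs whose labels differ by at least $2$ in $\mathbb{Z}_{2t}$, yielding $|c'(v) - c'(w)|_{2t} \geq 2$. The main obstacle is the negative-edge condition $|c(v) + c(w)|_{td} \geq d$; here I would rewrite it as $|c(v) - (-c(w))|_{td} \geq d$, apply the positive-edge reasoning to the pair $(c(v), -c(w))$, and use (b) to translate the conclusion into $|c'(v) + c'(w)|_{2t} \geq 2$. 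Arranging the partition to satisfy (a) and (b) simultaneously is the one genuine difficulty; the alternating $1, d-1$ pattern anchored at the multiples of $d$ achieves both, because the singleton arcs sit precisely at the fixed or paired points of the negation map on $\mathbb{Z}_{td}$.
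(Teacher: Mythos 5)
Your proposal is correct, but its forward direction takes a genuinely different route from the one the paper (following Kang and Steffen in \cite{Circular}) intends. The paper's derivation runs through the circular $r$-coloring machinery: since $\chi_c((G,\sigma)) = \min\{r: (G,\sigma) \text{ has a circular } r\text{-coloring}\}$, the hypothesis $\chi_c((G,\sigma))=t$ yields a circular $t$-coloring, and Theorem \ref{circ_Kang} applied with $(k,d)=(t,1)$ converts this into a $(2t,2)$-coloring; the backward direction via Theorem \ref{relation_cc} is the same in both arguments. You instead stay entirely at the level of $(k,d)$-colorings: Theorem \ref{infmin} gives a $(td,d)$-coloring, the hypothesis $\chi((G,\sigma))=t+1$ rules out $d=1$, and the alternating $1,(d-1)$ arc partition of $\mathbb{Z}_{td}$ compresses the coloring onto $\mathbb{Z}_{2t}$. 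In effect you re-prove, in discrete form, exactly the special case of Theorem \ref{circ_Kang} that is needed, and your construction is sound: I checked that adjacent arcs $A_j \cup A_{j+1}$ always form $d$ cyclically consecutive elements (including the wrap $A_{2t-1}\cup A_0$), that arcs have size at most $d-1$, and that $-A_{2i}=A_{[-2i]_{2t}}$ and $-A_{2i+1}=A_{[-(2i+1)]_{2t}}$, so both the positive-edge and (via your reflection trick) the negative-edge conditions transfer. The paper's route is two lines once the $r$-coloring equivalences are available; yours is self-contained and makes visible precisely where the hypothesis enters ($d\geq 2$ keeps every arc nonempty) and where the parity obstruction lives (negation-invariance of the partition), which is instructive since ratio alone does not determine $(k,d)$-colorability for signed graphs.

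Two small points to tighten. First, you should note that $t\geq 2$ in the nontrivial cases: $\chi((G,\sigma))=t+1\geq 2$ forces an edge, whence $\chi_c((G,\sigma))\geq 2$, and this is needed both for $(2t,2)$ to be a legitimate pair under the convention $k\geq 2d$ and for the distance estimates in $\mathbb{Z}_{2t}$ (for $t=1$ both sides of the equivalence are vacuous). Second, your closing justification of property (b) --- that the singleton arcs sit at the fixed or paired points of negation --- is not quite accurate: for $t$ odd and $d$ even the fixed point $td/2$ of negation lies inside a non-singleton arc (e.g.\ $t=3$, $d=4$: the fixed point $6$ lies in $A_3=\{5,6,7\}$, which is merely self-inverse as a set). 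The property that actually matters, and which does hold, is simply $-A_j = A_{[-j]_{2t}}$, verified by the direct computation above; state it that way rather than via the heuristic.
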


There is a simple construction of graphs with the aforementioned property.

\begin{theorem} [{\cite{Kang_Diss,Circular}}]
For every $k\geq 2$, there exists a signed simple graph $(G,\sigma)$ with $\chi((G,\sigma))-1=\chi_c((G,\sigma))=k$.
\end{theorem}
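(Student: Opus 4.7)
The plan is to apply the preceding theorem: for a signed graph $(G,\sigma)$ with $\chi((G,\sigma)) = k+1$, one has $\chi_c((G,\sigma)) = k$ if and only if $(G,\sigma)$ admits a $(2k,2)$-coloring. Combined with Theorem \ref{relation_cc}, it therefore suffices to exhibit, for each $k \ge 2$, a signed graph $(G_k,\sigma_k)$ that (i) does not admit a $(k,1)$-coloring, (ii) admits a $(k+1,1)$-coloring, and (iii) admits a $(2k,2)$-coloring. These three conditions together pin down $\chi((G_k,\sigma_k)) = k+1$ and $\chi_c((G_k,\sigma_k)) = k$.

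For the construction I would take $(G_k,\sigma_k)$ whose all-positive subgraph is a complete $l$-partite graph $K_{n_1,\ldots,n_l}$ (with $l$ and the part sizes depending on $k$), equipped with a carefully chosen set of negative edges placed inside the parts. To prove (i), one argues that in any $(k,1)$-coloring the positive complete $l$-partite structure, together with only $k$ available colors, forces each part to be monochromatic once $l=k$ (a pigeonhole count of color classes against parts, since colors used on different parts must be disjoint). The induced bijection between parts and $\mathbb{Z}_k$ then assigns color $0$ to some part, and an internal negative edge of that part enforces the contradictory condition $0 \not\equiv -0 \pmod k$. When $k$ is even, a parallel argument using the second self-inverse color $k/2$ is handled by a slight enrichment of the negative-edge pattern. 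For (ii), a $(k+1,1)$-coloring is easy to produce with the extra color of slack: either leave a self-inverse color unused, or split one part across two colors whose sum is nonzero modulo $k+1$.

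The crucial step is (iii), the explicit $(2k,2)$-coloring. The key idea is to assign values from $\mathbb{Z}_{2k}$ so that different parts receive sets of values all pairwise at distance at least $2$ in $\mathbb{Z}_{2k}$ (for instance by using odd residues $\{1,3,\ldots,2k-1\}$, any two distinct of which already satisfy $|i-j|_{2k}\ge 2$), while the internal negative edges now carry only the milder constraint $|c(v)+c(w)|_{2k} \ge 2$. The latter is satisfied whenever the two colors assigned to endpoints of a negative edge are odd and their sum is a nonzero even element of $\mathbb{Z}_{2k}$; the values of the part sizes are chosen to make such a joint assignment possible.

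The main obstacle is the combinatorial balance between (i) and (iii): the negative-edge pattern must be rigid enough that no permutation of $\mathbb{Z}_k$ over the parts avoids the forbidden $0$ (and $k/2$) constraints, yet loose enough that the expanded palette $\mathbb{Z}_{2k}$ still accommodates a proper $(2k,2)$-coloring. Managing the two parity cases for $k$ simultaneously (especially when $k$ is odd, where $k$ itself satisfies $2k \equiv 0 \pmod{2k}$ and so has to be avoided among the odd residues used) is the delicate technical heart of the argument carried out in \cite{Kang_Diss,Circular}.
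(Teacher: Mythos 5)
Your reduction is fine: using the $(2t,2)$-criterion together with Theorem \ref{relation_cc}, it suffices to build $(G_k,\sigma_k)$ with no $(k,1)$-coloring, a $(k+1,1)$-coloring, and a $(2k,2)$-coloring, and this matches the route the survey indicates (a construction whose all-positive subgraph is complete $l$-partite). Moreover, your plan genuinely works when $k$ is \emph{even}: take the positive complete $k$-partite graph with parts of size $2$ and one negative edge inside each part. Pigeonhole forces monochromatic parts in any $(k,1)$-coloring, the part colored $0$ violates its internal negative edge; a $(k+1,1)$-coloring exists since $k+1$ is odd, so $0$ is the only self-inverse color and can be avoided; and coloring part $i$ with the odd residue $2i-1$ gives a $(2k,2)$-coloring, since for even $k$ no odd $a$ has $2a\equiv 0\pmod{2k}$. (Incidentally, your ``enrichment'' for the color $k/2$ in (i) is unnecessary: once every part contains a negative edge, the color $0$ alone yields the contradiction.)

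For odd $k$, however, the step you defer as ``the delicate technical heart'' is not delicate but \emph{impossible} within your stated design ($l=k$ parts, negative edges only inside parts). First, your argument for (i) forces every part to contain a negative edge: if some part is internally clean, color it $0$ and give the remaining $k-1$ parts the nonzero colors (for odd $k$, $0$ is the only $a$ with $2a\equiv 0\pmod k$), which is a valid $(k,1)$-coloring. Second, in any $(2k,2)$-coloring the positive complete $k$-partite structure forces the $k$ color sets to be pairwise at circular distance at least $2$ in $\mathbb{Z}_{2k}$; counting cyclic gaps (they sum to $2k$, there are at least $k$ label changes each contributing a gap of at least $2$) shows every part is monochromatic and the $k$ colors are exactly the even residues or exactly the odd residues. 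The evens contain $0$, and for odd $k$ the odds contain $k$, with $2\cdot 0\equiv 2k\equiv 0 \pmod{2k}$; so the part receiving $0$ (resp.\ $k$) violates its internal negative edge, and no $(2k,2)$-coloring exists. Thus for odd $k$ your conditions (i) and (iii) are mutually exclusive. Worse, allowing negative edges \emph{between} parts does not rescue $l=k$: the bijection $o\mapsto (o+k)/2 \bmod k$ identifies the odd residues of $\mathbb{Z}_{2k}$ (self-paired element $k$, opposite pairs $\{o,-o\}$) with $\mathbb{Z}_k$ (self-paired element $0$, opposite pairs $\{a,-a\}$), so the part-labelling problems for $(k,1)$- and $(2k,2)$-colorings are isomorphic, and $\chi((G,\sigma))=k+1$ forces $\chi_c((G,\sigma))\neq k$. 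So for odd $k$ the construction must leave your design space altogether (a different number of parts, or vertices lying outside the positive multipartite core), and this is a genuine gap in the proposal rather than a detail to be checked against \cite{Kang_Diss,Circular}.
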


If, however, the difference between $\chi$ and $\chi_c$ is not 1, then this lower bound on $\chi_c$ can be further improved. Recalling Theorem \ref{infmin}, we know that the value of $\chi_c$ can be stated as $\frac{p}{q}$ with $p$ and $q$ being coprime integers and $p\leq 4n$. Thus, if $\chi_c$ and $\chi$ differ, then their difference has to be at least $\frac{1}{d}$, see \cite{Circular}.

\subsection{Circular coloring of signed graphs}

Zhu introduced circular $r$-coloring, see \cite{CircularEquiv}.  
For signed graphs, this definitions has to be modified in a similar way as 
$(k,d)$-coloring. 

Let $(G,\sigma)$ be a signed graph and $r\geq1$ be a real number. A circular-$r$-coloring of $(G,\sigma)$ is a function $f: V(G) \longrightarrow [0,r)$ such that for any edge $e=uv$ of $(G,\sigma)$, if $\sigma(e)=1$, then $1\leq |f(u) -f(v)|\leq r-1$, and if $\sigma(e)=-1$, then $1\leq |f(u) +f(v)-r|\leq r-1$. This definition can be equivalently stated if one identifies 0 and $r$ on the interval $[0,r]$, thus obtaining a circle with perimeter $r$, denoted by $S^r$. 
Now the colors are points on $S^r$ and the distance between two points $a$ and $b$ on $S^r$ is the length of the shorter arc of $S^r$ connecting $a$ and $b$, which can be described as $|a-b|_r$. Define the inverse of $a\in S^r$ to be $r-a$, then a circular $r$-coloring of $(G,\sigma)$ is a function $f: V(G) \longrightarrow [0,r)$ which satisfies the following conditions for every edge $e=uv$ of $(G,\sigma)$: 
If $\sigma(e)=1$, then $1\leq |f(u) -f(v)|_r$, and if $\sigma(e)=-1$, then $1\leq |f(u) +f(v)|_r$. This definition also enables the easy conversion under switching. If $f$ is a circular $r$-coloring of $(G,\sigma)$ and $(G,\sigma')$ is obtained by switching $(G,\sigma)$ at $X \subseteq V(G)$, then $f'$ defined as $f$ on $V(G)- X$ and as $f'=r-f$ on $X$ is a circular $r$-coloring of $(G,\sigma')$. Hence, for every circular $r$-coloring of a signed graph, 
there is a circular $r$-coloring of a switching equivalent graph that only uses colors 
in the interval $[0,\frac{r}{2}]$.

In \cite{CircularEquiv} it is shown that a graph $G$ has a $(k,d)$-coloring ($2d\leq k$) 
if and only if $G$ has circular $\frac{k}{d}$-coloring. A similar results is true for signed graphs:

\begin{theorem} [{\cite{Circular}}] \label{circ_Kang}
Let $k$ and $d$ be positive integers with $2d\leq k$. A signed simple graph $(G,\sigma)$ has a $(2k,2d)$-coloring if and only if $(G,\sigma)$ has a circular $\frac{k}{d}$-coloring. 
\end{theorem}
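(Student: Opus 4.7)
The plan is to treat the two implications of the equivalence separately.

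First I would handle the easier direction. Assume $(G,\sigma)$ has a $(2k,2d)$-coloring $c$, and set $f(v)=c(v)/(2d)$ for each $v\in V(G)$, so that $f\colon V(G)\to[0,k/d)$. The key fact I invoke is the homogeneity of the circular distance, $|\alpha x|_{\alpha r}=\alpha\,|x|_r$ for $\alpha>0$; with $\alpha=1/(2d)$ the given inequality $|c(u)-\sigma(e)c(v)|_{2k}\ge 2d$ rescales immediately to $|f(u)-\sigma(e)f(v)|_{k/d}\ge 1$, which by unpacking the cases $\sigma(e)=\pm 1$ recovers both defining conditions of a circular $k/d$-coloring.

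For the converse, given a circular $k/d$-coloring $f$ I rescale in the other direction: $g=2d\cdot f\colon V(G)\to[0,2k)$ satisfies $|g(u)-\sigma(e)g(v)|_{2k}\ge 2d$ for every edge, i.e.~a real-valued analogue of a $(2k,2d)$-coloring, and the task is to replace $g$ by an integer-valued coloring with the same inequalities. My plan is to pass to the unsigned double cover $\tilde G$ of $(G,\sigma)$ on vertex set $V(G)\times\{+,-\}$, joining $(u,\varepsilon_1)$ and $(v,\varepsilon_2)$ precisely when $uv\in E(G)$ and $\varepsilon_1\varepsilon_2=\sigma(uv)$. The antipodal lift $\tilde f(v,+)=f(v)$, $\tilde f(v,-)=(k/d)-f(v)$ is a (unsigned) circular $k/d$-coloring of $\tilde G$ by a case check on the four edge types, exploiting $|x-r|_r=|x|_r$ to convert the negative-edge sum condition into the ordinary difference condition on $\tilde G$. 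The unsigned version of the equivalence between $(2k,2d)$-colorings and circular $k/d$-colorings (Vince, Zhu) then produces a $(2k,2d)$-coloring $\tilde c$ of $\tilde G$, and the restriction $c(v)=\tilde c(v,+)$ is the desired $(2k,2d)$-coloring of $(G,\sigma)$ provided $\tilde c$ is antipodally symmetric, i.e.~$\tilde c(v,-)\equiv -\tilde c(v,+)\pmod{2k}$.

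The hard part will be producing a symmetric $\tilde c$. The deck involution $\tau\colon(v,\varepsilon)\mapsto(v,-\varepsilon)$ is an automorphism of $\tilde G$, so for every $(2k,2d)$-coloring $\tilde c$ of $\tilde G$ the twisted map $\tilde c^{\tau}(x)=-\tilde c(\tau x)\pmod{2k}$ is again a $(2k,2d)$-coloring, and I would exploit this $\mathbb{Z}_2$-symmetry to fold $\tilde c$ into a fixed point of $\tilde c\mapsto \tilde c^{\tau}$. What rules out the shortcut of naive rounding $c(v)=\lfloor 2df(v)\rfloor$ is precisely this signed asymmetry: on a positive edge the fractional-part bound $|\alpha-\beta|<1$ together with integrality forces $|c(u)-c(v)|_{2k}\ge 2d$, but on a negative edge the sum-bound $|\alpha+\beta|<2$ only yields $|c(u)+c(v)|_{2k}\ge 2d-1$, losing exactly the one unit that the symmetric-lift (or an equivalent polytope argument on the region of valid $g$'s, whose constraints have $\pm 1$ coefficients and integer right-hand sides) has to restore.
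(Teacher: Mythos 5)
The easy direction of your proposal is correct and complete: the homogeneity $|\alpha x|_{\alpha r}=\alpha|x|_r$ with $\alpha=1/(2d)$ does turn a $(2k,2d)$-coloring into a circular $\frac{k}{d}$-coloring in one line, and this matches how that implication is handled in \cite{Circular}. The converse, however, has a genuine gap exactly where you flag it yourself: ``producing a symmetric $\tilde c$.'' The observation that $\tilde c\mapsto \tilde c^{\tau}$ with $\tilde c^{\tau}(x)=-\tilde c(\tau x)\pmod{2k}$ is an involution on the set of $(2k,2d)$-colorings of the double cover $\tilde G$ is true but toothless: a $\mathbb{Z}_2$-action on a finite set need not have a fixed point, and there is no averaging or folding operation available here --- the midpoint of $\tilde c$ and $\tilde c^{\tau}$ leaves the integer grid, and choosing fiberwise one of the two values destroys the edge constraints, since nothing in the unsigned Vince--Zhu equivalence respects the deck involution. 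Worse, the existence of an antipodally symmetric $(2k,2d)$-coloring of $\tilde G$ is, by your own lift-and-restrict computation, \emph{equivalent} to the existence of a $(2k,2d)$-coloring of $(G,\sigma)$; so the ``fold $\tilde c$ into a fixed point'' step is not a technical lemma to be supplied later, it is the entire hard direction of the theorem restated in equivariant language. As written, the argument is circular.

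The correct mechanism is the one you relegate to a parenthesis. After fixing, for each edge, the integer ``branch'' of the constraint and clearing denominators, the set of valid colorings is cut out by inequalities with two $\pm1$ coefficients per row and integer right-hand sides; such systems (incidence systems of signed/bidirected graphs) are \emph{half}-integral rather than integral, because a tight cycle with an odd number of negative edges telescopes to a condition on $2f(v)$ rather than on $f(v)$. This is essentially how the proof in \cite{Circular} proceeds, in hands-on form: starting from a circular $\frac{k}{d}$-coloring, one shifts colors until constraints are tight and concludes that there is a circular coloring with $2d\,f(v)\in\mathbb{Z}$ for every $v$, whereupon $c(v)=2d\,f(v)$ is the desired $(2k,2d)$-coloring. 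That factor $2$ from half-integrality is precisely why the theorem pairs circular $\frac{k}{d}$-colorings with $(2k,2d)$-colorings rather than $(k,d)$-colorings, and it is exactly the one unit your rounding computation loses on negative edges. So: keep your rescaling direction and your diagnosis of why naive rounding fails, but replace the double-cover folding plan by the tight-constraint/half-integrality argument, carried out explicitly rather than asserted.
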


\begin{theorem} [{\cite{Circular}}]
Let $(G,\sigma)$ be a signed simple graph. 
Then $\chi_c((G,\sigma)) = min\{r: (G,\sigma)$ has a circular $r$-coloring$\}$.
\end{theorem}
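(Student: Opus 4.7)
The plan is to establish both inequalities in the claim, and simultaneously that the minimum on the right is attained, using Theorems \ref{infmin} and \ref{circ_Kang} as the main tools. Write $R = \{r \geq 1 : (G,\sigma) \text{ admits a circular } r\text{-coloring}\}$, so the goal reduces to $\chi_c((G,\sigma)) = \min R$.

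First I would show $\chi_c((G,\sigma)) \in R$, giving $\min R \leq \chi_c((G,\sigma))$. By Theorem \ref{infmin} the infimum defining $\chi_c$ is attained, so there exist positive integers $k_0 \geq 2 d_0$ with $\chi_c((G,\sigma)) = k_0/d_0$, realized by a $(k_0, d_0)$-coloring $c$. Doubling all values, $2c$ is a $(2k_0, 2d_0)$-coloring, and Theorem \ref{circ_Kang} translates it into a circular $k_0/d_0$-coloring, i.e. a circular $\chi_c((G,\sigma))$-coloring.

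For the reverse inequality I would show $\chi_c((G,\sigma)) \leq r$ for every $r \in R$. When $r = k/d$ is rational with $2d \leq k$, Theorem \ref{circ_Kang} immediately gives a $(2k,2d)$-coloring, so $\chi_c((G,\sigma)) \leq 2k/(2d) = r$. For general $r \in R$ with circular $r$-coloring $f : V(G) \to [0, r)$, I would compress $f$ to a rational minimum. Record the combinatorial type of $f$: the cyclic order of $\{f(v) : v \in V(G)\}$ on the circle $S^r$, together with, for each edge $e = uv$, the integer $m_e$ that rewrites the uniform constraint $|f(u) - \sigma(e) f(v)|_r \geq 1$ as the honest linear double inequality $1 \leq f(u) - \sigma(e) f(v) - m_e r \leq r - 1$. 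With this finite data fixed, the set of pairs $(f', r')$ satisfying these linear inequalities is a nonempty polytope, and the minimum of $r'$ on it is attained at a vertex with rational coordinates. This vertex provides a rational $r^* \leq r$ lying in $R$; provided $G$ has any edge, $r^* \geq 2$, and Theorem \ref{circ_Kang} applied to $r^* = k/d$ supplies a $(2k, 2d)$-coloring, yielding $\chi_c((G,\sigma)) \leq r^* \leq r$.

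The main obstacle is the compression step: one must specify the combinatorial type carefully so that the modular conditions genuinely linearize in $f'$ and $r'$, and then check that the vertex minimum $r^*$ still lies in the regime $2d \leq k$ where Theorem \ref{circ_Kang} applies. Edgeless components are handled trivially. Combining the two directions then yields $\chi_c((G,\sigma)) = \min R$, with the minimum attained at the rational $k_0/d_0$ furnished by Theorem \ref{infmin}.
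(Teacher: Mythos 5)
Your proposal is correct, and it supplies exactly what the survey leaves implicit: the paper states this theorem without proof, citing \cite{Circular}, and the intended derivation is your combination of Theorem \ref{infmin} (attainment and rationality of $\chi_c$) with Theorem \ref{circ_Kang}; your first direction is airtight once one notes $|2x|_{2k}=2|x|_k$, so doubling a $(k_0,d_0)$-coloring indeed gives a $(2k_0,2d_0)$-coloring, and $k_0\geq 2d_0$ holds by the definition of a $(k,d)$-coloring. The rational-compression step for irrational $r$ is the one genuinely new ingredient, and it works, with three small touch-ups. First, since $f(u)-f(v)\in(-r,r)$ and $f(u)+f(v)\in[0,2r)$, the shifts satisfy $m_e\in\{-1,0\}$ for positive and $m_e\in\{0,1\}$ for negative edges, and the cyclic order of the colors is not actually needed as part of the combinatorial type --- the $m_e$ and the box constraints $0\leq f'(v)\leq r'$ suffice. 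Second, the feasible set is a polyhedron rather than a polytope (it is unbounded upward in $r'$), but it is pointed: any edge constraint $1\leq f'(u)-\sigma(e)f'(v)-m_e r'\leq r'-1$ forces $s\geq 0$ in the recession cone and then $0\leq g(v)\leq s$ kills all lines, while the same constraint bounds $r'\geq 2$ from below, so the LP minimum is attained at a vertex, rational because the constraint data are integral; this also certifies $r^*\geq 2$, i.e.\ $k\geq 2d$, so Theorem \ref{circ_Kang} applies. If the optimal vertex has some $f^*(v)=r^*$, replace that value by $0$; the circular-coloring conditions are modular, so nothing breaks. Third, the only genuinely degenerate case is a totally edgeless $G$, which your remark about edgeless components does not quite cover: there every $r\geq 1$ admits a circular $r$-coloring, while the paper's definition gives $\chi_c=\inf\{k/d:k\geq 2d\}=2$, so the theorem must be read with the convention $r\geq 2$ or $E(G)\neq\emptyset$ --- a matter of convention, not a flaw in your argument.
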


\subsection{Relations between coloring parameters and the chromatic spectrum of a graph} \label{spectrum}

The cyclic chromatic number $\chi((G,\sigma))$ is different from the signed chromatic number $\chi_\pm(G,\sigma)$ 
as the following theorem shows. 

\begin{proposition} [{\cite{Circular}}]
If $(G,\sigma)$ is a signed simple graph, then $\chi_\pm((G,\sigma))-1 \leq\chi((G,\sigma))\leq\chi_\pm((G,\sigma))+1$.
\end{proposition}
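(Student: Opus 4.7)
The plan is to prove the two inequalities $\chi((G,\sigma)) \leq \chi_\pm((G,\sigma)) + 1$ and $\chi_\pm((G,\sigma)) \leq \chi((G,\sigma)) + 1$ separately, by directly converting a coloring of one type into a coloring of the other. The key observation is that both notions forbid the same pattern $c(u) = \sigma(e)c(v)$ on each edge $e = uv$, and differ only in whether equality is interpreted as integer equality (in the MRS framework, using colors in $M_n$) or as equality modulo $k$ (in the Kang--Steffen framework, using colors in $\mathbb{Z}_k$). The conversions therefore reduce to choosing an injection between an appropriate symmetric integer set and a cyclic group $\mathbb{Z}_N$ under which ``$a+b = 0$ as integers'' and ``$a+b \equiv 0 \pmod N$'' coincide for all relevant pairs.

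For $\chi \leq \chi_\pm + 1$, I would start from an optimal $n$-coloring $c : V(G) \to M_n$ with $n = \chi_\pm((G,\sigma))$ and let $N$ be the smallest odd integer with $N \geq n$, so $N = n$ if $n$ is odd and $N = n+1$ if $n$ is even. Writing $N = 2k+1$, one has $M_n \subseteq \{-k,\dots,k\}$, and the map $c'(v) := c(v) \bmod N$ is injective on $M_n$. Positive-edge constraints transfer immediately from injectivity. For negative edges, every sum $c(u) + c(v)$ lies in $[-2k, 2k]$, so it vanishes modulo $N = 2k+1$ only if it vanishes as an integer; the MRS condition $c(u) + c(v) \neq 0$ therefore transports to $c'(u) + c'(v) \not\equiv 0 \pmod N$. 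This yields a $(N,1)$-coloring and gives $\chi((G,\sigma)) \leq N \leq \chi_\pm((G,\sigma)) + 1$.

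For $\chi_\pm \leq \chi + 1$, I would start from an optimal $(k,1)$-coloring $c : V(G) \to \mathbb{Z}_k$ and lift each color to its centered representative in $S := \{-\lceil k/2\rceil + 1,\dots,\lfloor k/2\rfloor\}$, obtaining $c'$. The set $S$ is contained in $M_N$ where $N = k$ if $k$ is odd and $N = k+1$ if $k$ is even. Positive-edge constraints are preserved because the lift is a bijection onto $S$. For negative edges, I need $c'(u) + c'(v) \neq 0$ as integers; this follows because, in both parities, I would check that $c'(u) + c'(v) \equiv 0 \pmod k$ already forces $c'(u) + c'(v) = 0$, and then invoke the circular-coloring hypothesis. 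This gives $\chi_\pm((G,\sigma)) \leq N \leq \chi((G,\sigma)) + 1$.

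The main bookkeeping obstacle is the even subcase $k = 2m$ of the second step. There the sum $c'(u) + c'(v)$ lies in $[-k+2, k]$, so apart from $0$ the value $k$ is also congruent to $0$ modulo $k$ and is actually attained, when both endpoints lift to $m$. I would dispose of this by noting that in that situation $c(u) + c(v) = 2m = k \equiv 0 \pmod k$, contradicting the assumption that $c$ is a valid $(k,1)$-coloring on the negative edge $uv$. Hence the only troublesome value is excluded by the hypothesis itself, and the conversion goes through. Apart from this mild case distinction, the whole argument is a straightforward range estimate on integer sums.
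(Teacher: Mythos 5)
Your proof is correct, and it is essentially the standard conversion argument: the survey states this proposition without proof, citing Kang and Steffen \cite{Circular}, and your two translations (embedding $M_n$ injectively into $\mathbb{Z}_N$ for the least odd $N \geq n$, so that integer sums in $[-2k,2k]$ vanish modulo $N=2k+1$ only if they vanish in $\mathbb{Z}$; and lifting a $(k,1)$-coloring to centered integer representatives inside $M_k$ or $M_{k+1}$) are exactly the kind of bookkeeping used there, and match the conversion style the survey itself displays in its proof of Proposition \ref{circ_MRS_even}.

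One remark: your ``main bookkeeping obstacle'' in the second step is vacuous, because you need only the trivial direction of the congruence. On a negative edge the MRS condition is $c'(u)+c'(v)\neq 0$ \emph{as integers}, and this follows in one line: if $c'(u)+c'(v)=0$, then $c(u)+c(v)\equiv 0 \pmod{k}$, contradicting the $(k,1)$-coloring hypothesis. The configuration you single out in the even case, $c'(u)=c'(v)=m$ with sum $k$, would satisfy the MRS constraint anyway (the sum is $k\neq 0$); it is indeed excluded by the hypothesis, as you note, but no parity case distinction is required at all. (Pedantically, the degenerate situation $\chi_\pm((G,\sigma))=1$ deserves a word, since $(k,d)$-colorings are only defined for $k\geq 2d$, so $N=1$ is not a legal target in your first step; but this occurs only for edgeless graphs, where both inequalities hold trivially.)
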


The following proposition classifies some easy examples showing that these bounds are tight.

\begin{proposition} [{\cite{Circular}}]
\renewcommand{\labelenumi}{(\roman{enumi})}
Let $(G,\sigma)$ be a connected signed simple graph with $|V(G)|\geq 3$. 
\begin{enumerate} 
\item If $(G,\sigma)$ is antibalanced and not bipartite, then $\chi_\pm((G,\sigma))=2$ and $\chi((G,\sigma))=3 $.
\item If $(G,\sigma)$ is bipartite and not antibalanced, then $\chi_\pm((G,\sigma))=3$ and $\chi((G,\sigma))=2 $.
\end{enumerate}
\end{proposition}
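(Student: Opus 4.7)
The strategy is to establish each chromatic parameter by separately proving its upper and lower bound, using three ingredients: (a) the earlier lemma that $\chi_\pm((G,\sigma)) \leq 2$ is equivalent to $(G,\sigma)$ being antibalanced, (b) the invariance of both $\chi_\pm$ and $\chi$ under resigning, and (c) the observation that in the small cyclic groups $\mathbb{Z}_2$ and $\mathbb{Z}_3$ the modular coloring constraint $c(v) \neq \sigma(e) c(w)$ collapses to a familiar condition on $G$. Connectedness together with $|V(G)| \geq 3$ guarantees the existence of an edge, which will be used to rule out $\chi \leq 1$ and $\chi_\pm \leq 1$.

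For part (i), the hypothesis that $(G,\sigma)$ is antibalanced gives $\chi_\pm((G,\sigma)) \leq 2$ immediately from the cited lemma. For the matching lower bound, note that $M_1 = \{0\}$, so any $1$-coloring would force $c(v) = 0 = \sigma(e) c(w)$ on every edge; since $G$ has an edge, $\chi_\pm \geq 2$. For $\chi$, I invoke $(G,\sigma) \sim (G,\texttt{\bf -1})$ and work with the all-negative signature. A modular $2$-coloring of $(G,\texttt{\bf -1})$ requires $c(v) \neq -c(w) \pmod 2$, and since $-1 \equiv 1 \pmod 2$ this reduces to $c(v) \neq c(w)$, i.e., a proper $2$-coloring of $G$. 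Because $G$ is not bipartite, none exists, so $\chi((G,\sigma)) \geq 3$. The constant assignment $c \equiv 1$ in $\mathbb{Z}_3$ then satisfies $1 \neq -1 \equiv 2 \pmod 3$ on every edge, yielding $\chi((G,\sigma)) \leq 3$.

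For part (ii), I handle $\chi$ first. Since $-1 \equiv 1 \pmod 2$, the modular constraint in $\mathbb{Z}_2$ becomes $c(v) \neq c(w)$ regardless of $\sigma$, so any proper $2$-coloring of the bipartite graph $G$ is a valid modular $2$-coloring, giving $\chi \leq 2$; the lower bound $\chi \geq 2$ follows from the existence of an edge. For $\chi_\pm$, the hypothesis that $(G,\sigma)$ is not antibalanced together with the cited lemma gives $\chi_\pm \geq 3$. For the upper bound, take a bipartition $(A,B)$ of $G$ and color $A$ with $1 \in M_3$ and $B$ with $0 \in M_3$; on every edge $vw$ with, say, $c(v) = 1$ and $c(w) = 0$, we have $\sigma(e) c(w) = 0 \neq 1 = c(v)$ (and symmetrically), confirming $\chi_\pm \leq 3$.

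The only substantive step is the arithmetic reduction, which I do not expect to be an obstacle: in $\mathbb{Z}_2$ the sign becomes invisible so modular colorings collapse to ordinary $2$-colorings of $G$, and in $\mathbb{Z}_3$ the key fact $1 \neq -1$ lets the constant coloring work on an all-negative signature. With those identifications, each of the four numerical claims is obtained by one application of the antibalanced lemma plus one explicit coloring.
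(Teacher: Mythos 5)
Your proof is correct: all four bounds check out --- the $\mathbb{Z}_2$ collapse ($-1\equiv 1$, so a modular $2$-coloring is exactly a proper $2$-coloring of $G$), the constant coloring $c\equiv 1$ in $\mathbb{Z}_3$ on the representative $(G,\texttt{\bf -1})$ combined with resigning invariance of $\chi$, the cited lemma that $(G,\sigma)$ is antibalanced iff $\chi_\pm((G,\sigma))\leq 2$, and the coloring of the bipartition with $1$ and $0$ from $M_3$ (where checking $c(v)\neq\sigma(e)c(w)$ in one direction suffices since $\sigma(e)^2=1$). The survey states this proposition without proof, citing \cite{Circular}, and your argument is essentially the standard one, assembled from exactly the ingredients the paper itself provides.
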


Obviously, the signed chromatic number of a signed graph dependents not only on the structure of the underlying graph, but also on the signature. For example, every all-negative signed graph can be colored using color 1 only. It is therefore important to study how much the signed chromatic number of a given signed graph can be changed by replacing the corresponding signature. 

The set $\Sigma_{\chi}(G)=\{\chi((G,\sigma)): \sigma \text{ is a signature on }G \}$ is the chromatic spectrum of 
$G$ with respect to $\chi((G,\sigma))$, and let $M_{\chi} = \max \Sigma_{\chi}(G)$ and $m_{\chi} = \min \Sigma_{\chi}(G)$. Analogously, 
$\Sigma_{\chi_\pm}(G) =\{\chi_\pm((G,\sigma)): \sigma$ is a signature on $G\}$ is the chromatic spectrum of $G$ 
with respect to $\chi_{\pm}$ and $m_{\chi_\pm}(G)$ and $M_{\chi_\pm}(G)$ denote the minimum and maximum of this set, respectively.

\begin{proposition} [{\cite{Spectrum}}]
Let $G$ be a nonempty simple graph. The following statements hold.
\renewcommand{\labelenumi}{(\roman{enumi})}
\begin{enumerate} 
\item $\Sigma_{\chi}(G)=\{1\}$ $\Leftrightarrow$ $m_{\chi}=1$ $\Leftrightarrow$ $E(G)=\emptyset$.
\item If $E(G)\neq \emptyset$, then $\Sigma_{\chi}(G)=\{2\}$ if and only if $m_{\chi}=2$ if and only if $G$ is bipartite.
\item If $G$ is not bipartite, then $m_{\chi}=3$.
\item $\Sigma_{\chi_\pm}(G)=\{1\}$ if and only if $E(G)=\emptyset$.
\item If $E(G)\neq \emptyset$, then $m_{\chi_\pm}=2$.
\end{enumerate}
\end{proposition}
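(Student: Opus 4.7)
The plan is to handle the five items of the proposition in sequence, leaning on two elementary observations: the behavior of colorings in $\mathbb{Z}_k$ for very small $k$, and the effect of the all-negative signature $\texttt{\bf -1}$.

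For (i) and (iv), the forward implications follow from the same trivial argument. In $\mathbb{Z}_1=\{0\}$ (respectively $M_1=\{0\}$), the only available value is $0$, so any edge $uv$ would force $0\neq \sigma(e)\cdot 0=0$, which is impossible. Hence $\chi((G,\sigma))\geq 2$ and $\chi_\pm((G,\sigma))\geq 2$ as soon as $E(G)\neq\emptyset$, so $m_\chi=1$ or $m_{\chi_\pm}=1$ forces $E(G)=\emptyset$. Conversely, if $E(G)=\emptyset$ the coloring constraints are vacuous and the monochromatic coloring is valid for every signature, so both spectra collapse to $\{1\}$.

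For (ii) and the lower bound in (iii), the crucial point is that $-1\equiv 1\pmod{2}$, so in $\mathbb{Z}_2$ the multiplication by $\sigma(e)\in\{\pm 1\}$ is the identity, and a $(2,1)$-coloring of $(G,\sigma)$ is literally a proper $2$-coloring of $G$, independent of the signature. Thus $\chi((G,\sigma))\leq 2$ for some (equivalently, every) $\sigma$ if and only if $G$ is bipartite. Combined with $\chi\geq 2$ when $E(G)\neq\emptyset$, this settles (ii), and when $G$ is non-bipartite it gives $m_\chi\geq 3$. For the matching upper bound, I would exhibit an explicit $(3,1)$-coloring of $(G,\texttt{\bf -1})$: set $c(v)=1$ for every vertex. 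On an arbitrary edge $uv$, $|c(u)-\sigma(uv)c(v)|_3=|1-(-1)\cdot 1|_3=|2|_3=1$, so the coloring is valid and $\chi((G,\texttt{\bf -1}))\leq 3$.

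Finally, (v) is immediate from the lemma stated earlier in the paper, which says $\chi_\pm((H,\tau))\leq 2$ if and only if $(H,\tau)$ is antibalanced. Since $(G,\texttt{\bf -1})$ is antibalanced by definition, $\chi_\pm((G,\texttt{\bf -1}))\leq 2$, and together with $m_{\chi_\pm}\geq 2$ from (iv) this yields $m_{\chi_\pm}=2$. The only real subtlety throughout is careful bookkeeping between the two coloring conventions; once the triviality of the sign in $\mathbb{Z}_2$ and the role played by $(G,\texttt{\bf -1})$ are noticed, no real obstacle remains.
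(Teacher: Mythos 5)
Your proof is correct and follows essentially the same route as the paper, which sketches only the key step for (iii) --- taking the all-negative signature and coloring every vertex $1$ in $\mathbb{Z}_3$ --- exactly the construction you give, with the remaining items handled by the same elementary observations (the sign is invisible in $\mathbb{Z}_2$, and the one-color sets $\mathbb{Z}_1$, $M_1$ admit no properly colored edge). Your use of the antibalanced lemma for (v) is a negligible variant of the direct constant coloring in $M_2=\{\pm 1\}$ on $(G,\texttt{\bf -1})$.
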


The third statement is obtained by using an all-negative signature on $G$ and coloring every vertex with color $1 \in \mathbb {Z}_3$. Using a similar method on $G-H$, with $H$ being an induced subgraph of $G$, the following statement is obtained.

\begin{lemma} [{\cite{Spectrum}}]
Let $k\geq 3$ be an integer. If $H$ is an induced subgraph of a simple graph $G$ with $k \in\Sigma_{\chi}(H)$
$(k \in\Sigma_{\chi_\pm}(H))$, 
then $k \in\Sigma_{\chi}(G)$ $(k \in\Sigma_{\chi_\pm}(G))$.
\end{lemma}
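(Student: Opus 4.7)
The plan is to build a signature on $G$ by taking the witness signature on $H$ and declaring every new edge negative, then to extend a proper coloring of $(H,\sigma_H)$ to $G$ by coloring every vertex in $V(G)\setminus V(H)$ with the single color $1$. The hypothesis $k\geq 3$ ensures $1\not\equiv -1\pmod k$ (respectively $1\neq -1$ in $M_k$), which is what allows a single color to form a valid monochromatic block under the all-negative gluing.

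Concretely, I would first fix a signature $\sigma_H$ on $H$ with $\chi((H,\sigma_H))=k$ and a proper $\mathbb{Z}_k$-coloring $c_H$ of $(H,\sigma_H)$. Before gluing, I would \emph{reshape} the pair $(\sigma_H,c_H)$ via the resigning principle: at every vertex $v\in V(H)$ with $c_H(v)=k-1$ I resign $(H,\sigma_H)$ and simultaneously replace $c_H(v)$ by $-c_H(v)\equiv 1\pmod k$. Since $\chi$ is invariant under resigning, this yields an equivalent signed graph $(H,\sigma_H')$ together with a proper coloring $c_H'$ whose image misses $k-1$. I then define $\sigma$ on $E(G)$ by $\sigma|_{E(H)}=\sigma_H'$ and $\sigma(e)=-1$ for every $e\in E(G)\setminus E(H)$; this is well defined because $H$ is induced, so $E(G[V(H)])=E(H)$. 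Finally I extend $c_H'$ to $c\colon V(G)\to\mathbb{Z}_k$ by setting $c(u)=1$ for each $u\in V(G)\setminus V(H)$.

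To verify that $c$ is a proper $\mathbb{Z}_k$-coloring of $(G,\sigma)$, I would distinguish three edge types. Edges inside $E(H)$ are handled by $c_H'$. A negative edge with both ends in $V(G)\setminus V(H)$ imposes $1\neq -1\pmod k$, which is exactly the assumption $k\geq 3$. A negative edge joining $v\in V(H)$ to $u\in V(G)\setminus V(H)$ imposes $c_H'(v)\neq -c(u)\equiv k-1\pmod k$, which is exactly what the reshaping step guarantees. This gives $\chi((G,\sigma))\leq k$; the matching lower bound follows because $(H,\sigma_H')$ is an induced sub-signed-graph of $(G,\sigma)$, so any $\mathbb{Z}_{k-1}$-coloring of $(G,\sigma)$ would restrict to one of $(H,\sigma_H')$ and contradict $\chi((H,\sigma_H'))=k$. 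Hence $\chi((G,\sigma))=k$ and $k\in\Sigma_\chi(G)$. The $\chi_\pm$-version follows the same template: $M_k$ contains $\pm 1$ for every $k\geq 2$, the constraint on a negative edge is $c(v)+c(u)\neq 0$ as integers, and resigning still flips a chosen $-1$ to $1$ within $M_k$.

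The main obstacle --- the only step beyond bookkeeping --- is the reshaping of $c_H$ so that the color $-1$ never appears on a vertex of $V(H)$ that will be adjacent to $V(G)\setminus V(H)$. Without such a fix a naive extension can fail, because $c_H$ may well use every color of $\mathbb{Z}_k$. The resigning principle supplies a clean local remedy: flipping the sign at a single vertex inverts precisely its color, so one can eradicate any chosen ``self-opposite partner'' color from $c_H$ one vertex at a time without disturbing any other edge constraint. Once this is in hand, the all-negative extension together with the monochromatic block outside $H$ works for both chromatic parameters without further case analysis.
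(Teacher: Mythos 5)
Your proposal is correct and follows essentially the same route the paper sketches: extend the witness signature on the induced subgraph $H$ by an all-negative signature on the remaining edges, color all of $V(G)\setminus V(H)$ with the single color $1$ (which is where $k\geq 3$ enters), and get the matching lower bound by restriction. Your resigning step, which eradicates the color $k-1$ (resp.\ $-1$) from the boundary of $H$ before gluing, is exactly the right way to fill in the boundary-edge detail that the survey's one-line sketch glosses over.
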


It turns out that the chromatic spectrum of $G$ with respect to these two coloring parameters is an interval of integers. 

\begin{theorem} [{\cite{Spectrum}}]
If $G$ is a simple graph, then $\Sigma_{\chi}(G)=\{k : k \in \mathbb{N} \text{ and } m_{\chi}(G)\leq k\leq M_{\chi}(G)\}$ and 
$\Sigma_{\chi_\pm}(G)=\{k : k \in \mathbb{N} \text{ and } m_{\chi_\pm}(G)\leq k\leq M_{\chi_\pm}(G)\}$.
\end{theorem}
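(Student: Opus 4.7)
My plan is to reduce the theorem to a discrete intermediate value property. If any two ``neighbouring'' signatures of $G$, differing in only one edge sign, induce chromatic numbers that differ by at most one, then a step-by-step deformation from a minimising signature to a maximising one in signature space realises every intermediate integer, forcing both $\Sigma_\chi(G)$ and $\Sigma_{\chi_\pm}(G)$ to be intervals.

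The central step is the following single-edge-flip lemma: if $\sigma$ and $\sigma'$ are signatures of $G$ that agree except on one edge $e^* = u^*v^*$, then $|\chi((G,\sigma)) - \chi((G,\sigma'))| \leq 1$ and $|\chi_\pm((G,\sigma)) - \chi_\pm((G,\sigma'))| \leq 1$. To prove this, I would take an optimal colouring $c$ of $(G,\sigma)$; if it remains proper for $(G,\sigma')$ we are done, so assume the only violated constraint is at $e^*$, i.e.~$c(u^*) = \sigma'(e^*)c(v^*)$. The plan is to recolour $u^*$ using one additional colour from an enlarged palette. For $\chi_\pm$ with $n = 2k$ this is immediate, since $M_{n+1} = M_n \cup \{0\}$ and no neighbour of $u^*$ is coloured $0$ under an $M_n$-colouring, so setting $c'(u^*) = 0$ creates no new conflict. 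For $n = 2k+1$ the palette changes more substantially, and one has to relabel every $0$-vertex $w$ of $c$ by $\varepsilon_w (k+1) \in \{\pm(k+1)\}$ in $M_{n+1}$; choosing $\varepsilon_w = \sigma(u^*w)$ at the $0$-neighbours of $u^*$ ensures that all forbidden values of the form $\pm(k+1)$ at $u^*$ equal $+(k+1)$, leaving $-(k+1)$ available for the recolouring. Independence of the $0$-vertices in $G$ (forced by the original propriety of $c$) guarantees that these sign choices can be made freely. For $\chi$, the analogous step must be carried out inside $\Z_{k+1}$, which is more delicate because the naive inclusion $\Z_k \subset \Z_{k+1}$ does not respect negation and the colouring must be transported through a negation-compatible injection; here a case split on the parity of $k$ seems unavoidable.

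Granting this lemma, let $\sigma_0$ and $\sigma_N$ be signatures of $G$ with $\chi((G,\sigma_0)) = m_\chi(G)$ and $\chi((G,\sigma_N)) = M_\chi(G)$, and let $e_1, \dots, e_N$ enumerate the edges where they disagree. Define $\sigma_i$ inductively by flipping the sign of $e_i$ in $\sigma_{i-1}$. The integer sequence
\[
\chi((G,\sigma_0)), \chi((G,\sigma_1)), \dots, \chi((G,\sigma_N))
\]
starts at $m_\chi(G)$, ends at $M_\chi(G)$, and by the lemma consecutive terms differ by at most $1$; hence every integer in $[m_\chi(G), M_\chi(G)]$ is attained as some $\chi((G,\sigma_i))$ and belongs to $\Sigma_\chi(G)$. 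The identical argument, using the $\chi_\pm$-part of the lemma, yields the corresponding statement for $\Sigma_{\chi_\pm}(G)$. The principal obstacle is the single-edge-flip lemma itself, and within it the fact that $\Z_k$ does not embed into $\Z_{k+1}$ in a negation-compatible way for every $k$; this is the point where the $\chi$-case demands substantially more care than the $\chi_\pm$-case, and may require a detour through a larger modulus where negation behaves more cooperatively.
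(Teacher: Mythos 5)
The survey itself states this theorem without proof (it is quoted from \cite{Spectrum}), so you are judged on the merits of your argument. Your skeleton --- a single-edge-switch lemma combined with a discrete intermediate-value walk between a minimising and a maximising signature --- is exactly the right interpolation mechanism, and your $\chi_\pm$-half of the switch lemma is correct and essentially complete: in the odd case one only needs to add the observation that $c(v^*)\neq 0$ (if $c(v^*)=0$, the conflict $c(u^*)=\sigma'(e^*)c(v^*)$ would force $c(u^*)=0$, and then $e^*$ would have been improper already under $\sigma$), after which your choice $\varepsilon_w=\sigma(u^*w)$ on the independent set $c^{-1}(0)$ does leave $-(k+1)$ free at $u^*$.

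The genuine gap is the $\chi$-half of the switch lemma, which you explicitly leave open, and neither of your fallback ideas closes it. First, the obstacle is not quite where you place it: a constraint-compatible transport $f:\mathbb{Z}_k\to\mathbb{Z}_{k+1}$ exists for \emph{every} $k$. One does not need $f(-a)=-f(a)$ everywhere, only the one-directional property that $f(a)+f(b)\equiv 0 \pmod{k+1}$ implies $a+b\equiv 0 \pmod{k}$; this is arranged by matching antipodal pairs of $\mathbb{Z}_k$ with antipodal pairs of $\mathbb{Z}_{k+1}\setminus\{z\}$ for one omitted element $z$, sending the self-inverse elements of $\mathbb{Z}_k$ (namely $0$, and $k/2$ when $k$ is even) into $\{0,-z\}$. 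For odd $k$ one may take $z=(k+1)/2$, so $-z=z$ lies outside both the image and its negative, the colour $z$ is never forbidden at $u^*$, and your plan goes through. For even $k$, however, the single-vertex repair provably fails: the colour $f(a)$ is forbidden at $u^*$ exactly when $a\in F_0=\{\sigma'(u^*w)c(w):w\sim u^*\}$, which after the switch can be all of $\mathbb{Z}_k$, and the one remaining colour $z$ is forbidden as soon as $u^*$ has a negative neighbour coloured $f^{-1}(-z)$; since one can check that every admissible $f$ must satisfy $\{f(0),f(k/2)\}=\{0,-z\}$, a vertex $u^*$ with $F_0=\mathbb{Z}_k$ that has neighbours of both signs coloured $0$ and of both signs coloured $k/2$ blocks all $k+1$ colours for every choice of $f$. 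The per-vertex relabelling trick that saved the $\chi_\pm$ case does not transfer cleanly either: here there are two self-inverse colour classes, $c^{-1}(0)$ and $c^{-1}(k/2)$, which, although each is independent, can be joined to each other by edges, so their relabellings interact instead of being free. Second, the ``detour through a larger modulus'' cannot rescue the lemma as stated: $a\mapsto 2a$ embeds $\mathbb{Z}_k$ into $\mathbb{Z}_{2k}$ negation-compatibly, but the lemma requires a $\mathbb{Z}_{k+1}$-colouring, and routing through circular colourings only yields $\chi((G,\sigma'))\leq \chi_c((G,\sigma'))+1\leq k+2$, off by one precisely because $\chi-\chi_c$ can equal $1$ for signed graphs (Theorem \ref{relation_cc}). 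As written, the $\Sigma_{\chi}$-statement of the theorem therefore remains unproven; you need either a global recolouring argument or a normalisation by resigning before transporting the colouring.
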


We close this section with relating $\chi_{\pm}$ and $\chi_c$ to each other.

\begin{proposition} \label{circ_MRS_even}
Let $(G,\sigma)$ be a signed simple graph and $k \in \mathbb{N}$. If $(G,\sigma)$ has a $2k$-coloring, then $(G,\sigma)$ has a $(4k,2)$-coloring and a circular $2k$-coloring.
In particular, if $\chi_{\pm}((G,\sigma)) = 2k$, then $\chi_c((G,\sigma)) \leq \chi_{\pm}((G,\sigma))$. 
\end{proposition}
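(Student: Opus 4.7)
The plan is to construct a $(4k,2)$-coloring of $(G,\sigma)$ from the given $2k$-coloring $c$, then invoke Theorem~\ref{circ_Kang} to obtain the circular $2k$-coloring, and finally read off the inequality $\chi_c \leq \chi_\pm$ in the special case.

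First, recall that the $2k$-coloring uses colors $M_{2k} = \{\pm 1, \pm 2, \dots, \pm k\}$, and in particular $c(v) \neq 0$ for all $v$. I would define a map $c' : V(G) \to \mathbb{Z}_{4k}$ by
\begin{displaymath}
c'(v) \;=\; 2c(v) - \sign(c(v)) \pmod{4k},
\end{displaymath}
so that $c(v) = j > 0$ is sent to $2j-1$ and $c(v) = j < 0$ is sent to $2j+1 \equiv 4k + 2j + 1 \pmod{4k}$. The image therefore lies entirely in the set of odd residues $\{1, 3, 5, \dots, 4k-1\} \subset \mathbb{Z}_{4k}$.

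The central observation I would highlight is that this map is equivariant under negation: for $j > 0$ we have $-(2j-1) \equiv 4k - 2j + 1 = 2(-j) + 1 \pmod{4k}$, which is precisely the image of $-j \in M_{2k}$. In other words, $c'(v) \equiv -c'(w) \pmod{4k}$ if and only if $c(v) = -c(w)$ in $M_{2k}$, and (trivially) $c'(v) \equiv c'(w) \pmod{4k}$ if and only if $c(v) = c(w)$. Hence, for every edge $e = vw$, the assumption $c(v) \neq \sigma(e)\, c(w)$ translates directly into $c'(v) \not\equiv \sigma(e)\, c'(w) \pmod{4k}$. Since both $c'(v)$ and $\sigma(e)\, c'(w)$ are odd elements of $\mathbb{Z}_{4k}$, their difference is even, so a non-zero difference forces $|c'(v) - \sigma(e)\, c'(w)|_{4k} \geq 2$. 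This establishes that $c'$ is a valid $(4k,2)$-coloring.

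Now Theorem~\ref{circ_Kang}, applied with parameters $k' = 2k$ and $d = 1$ (which satisfies $2d \leq k'$ for $k \geq 1$), converts the $(4k,2) = (2k', 2d)$-coloring into a circular $\frac{k'}{d} = 2k$-coloring of $(G,\sigma)$. For the ``In particular'' statement, if $\chi_\pm((G,\sigma)) = 2k$ then $(G,\sigma)$ admits a $2k$-coloring, and the construction above yields a circular $2k$-coloring, whence $\chi_c((G,\sigma)) \leq 2k = \chi_\pm((G,\sigma))$ by the characterization of $\chi_c$ as the minimum $r$ admitting a circular $r$-coloring.

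The main obstacle is purely in the translation step: one must pick a transformation $c \mapsto c'$ that (a) is injective enough to respect equality, (b) intertwines the $\pm 1$ action on $M_{2k}$ with the $x \mapsto -x$ action on $\mathbb{Z}_{4k}$, and (c) lands in a ``parity class'' which automatically enforces the distance $\geq 2$ gap. Mapping into the odd residues of $\mathbb{Z}_{4k}$ accomplishes all three simultaneously; once this is set up, the edge condition and the appeal to Theorem~\ref{circ_Kang} are mechanical. I do not expect delicate cases: the construction is symmetric in the signs of $c(v)$ and $c(w)$ and handles positive and negative edges uniformly via the equivariance.
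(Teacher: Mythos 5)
Your proof is correct and follows essentially the same route as the paper: both map the $2k$-coloring onto the odd residues of $\mathbb{Z}_{4k}$ to obtain a $(4k,2)$-coloring and then invoke Theorem~\ref{circ_Kang}. The only difference is cosmetic --- the paper first resigns so that all colors are positive and then uses $\phi'(v)=2\phi(v)-1$, whereas you fold that normalization into the formula $c'(v)=2c(v)-\sign(c(v)) \pmod{4k}$ and verify equivariance under negation directly.
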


\begin{proof}
Let $\phi : V(G) \rightarrow \{\pm 1, \dots, \pm k\}$ be a $2k$-coloring. By possibly switching we can assume that 
$\phi(v)$ is positive for every $v \in V(G)$. Let $\phi' : V(G) \rightarrow \mathbb {Z}_{4k}$ with 
$\phi'(v) = 2 \phi(v)-1$. It is easy to see that $\phi'$ is a $(4k,2)$-coloring of $(G,\sigma)$. By Theorem \ref{circ_Kang}, $(G,\sigma)$ has a circular $2k$-coloring. The second 
part of the statement follows directly from these facts. 
\end{proof}

\subsection{Choosability on signed graphs}
 
Another important adoption from the theory of graph coloring is the notion of list-coloring and choosability for signed graphs. Both definitions of the previous two subsections ($k$-coloring and $\mathbb{Z}_k$-coloring) are used to study 
list coloring for signed graphs. 
For graphs, list coloring and choosability had been introduced by Erdős, Rubin and Taylor in \cite{unsignChoose} and many results on these parameters can be transformed into similar ones for signed graphs. The coloring number of a graph $G$, denoted by 
$col(G)$, is the maximum ranging over the minimum degree of all subgraphs of $G$ plus 1. Therefore, a graph with coloring number at most $k + 1$ is also $k$-degenerate. Note that the coloring number is unchanged under any signature assignment to $G$. 

Let $(G,\sigma)$ be a signed graph, $k\geq 0$ be an integer and $f:V(G) \longrightarrow \mathbb{N}_0$ a function. A list-assignment $L$ of $(G,\sigma)$ is a function that maps every vertex $v$ of $G$ to a nonempty set (list) of colors $L(v)\subseteq \mathbb{Z}$. More specific, if $|L(v)|= f(v)$ for every $v \in V(G)$ we call $L$ an $f$-assignment and if $|L(v)|= k$ for every $v \in V(G)$ we call it a $k$-assignment. An $L$-coloring of $(G,\sigma)$ is a proper coloring $\phi$ of $(G,\sigma)$ such that $\phi(v) \in L(v)$ for all $v \in V(G)$. If $(G,\sigma)$ admits an $L$-coloring, then $(G,\sigma)$ is said to be $L$-colorable or, more generally, list-colorable. Resulting from this, if $(G,\sigma)$ is $L$-colorable for every $f$-assignment $L$ of $(G,\sigma)$, it is called $f$-list-colorable and similarly $(G,\sigma)$ is called $k$-list-colorable or $k$-choosable if it is $L$-colorable for every $k$-assignment $L$. The signed list-chromatic number or signed choice number of $(G,\sigma)$ is the smallest integer $k \geq 0$ such that $(G,\sigma)$ is $k$-choosable. We denote it as $\chi^l_{\pm}((G,\sigma))$.

The proposition that every $(d-1)$-degenerate graph is $d$-choosable can easily be generalized via induction on the vertex set of a signed graph.
\begin{theorem}[{\cite{PlanarChoose}}] \label{choosable_Thm}
Let $(G,\sigma)$ be a signed simple graph. If $G$ is $(d - 1)$-degenerate, then $(G,\sigma)$ is $d$-choosable.
\end{theorem}

A coloring of $(G,\sigma)$ with color set $M_k$ can be regarded as an $L$-coloring for the $k$-assignment $L$ with $L(v)= M_k$ for every $v \in V(G)$. Hence, $\chi_\pm((G,\sigma)) \leq \chi^l_\pm((G,\sigma))$.

Since $\chi_\pm((G,\sigma))$ is invariant under switching, it makes sense that this also holds true for the signed choice number and some sort of switching defined for a list-assignment. In \cite{PlanarChoose} there is such a definition.

Let $(G,\sigma)$ be a signed graph, $L$ be a list-assignment of $(G,\sigma)$, and $\phi$ be an $L$-coloring of $(G,\sigma)$. Let
$X \subseteq V(G)$. We say $L'$ is obtained from $L$ by a switch at $X$ if
\begin{displaymath}
L'(v)=\left\{-\alpha : \alpha \in L(v)\right\} \text{, if }v \in X, \text {  and  }  L'(v)= L(v) \text{, if }v \in V(G)- X.
\end{displaymath}

With this definition we easily get the following observation. 

\begin{proposition} 
Let $(G,\sigma)$ be a signed graph, $L$ be a list-assignment of $G$ and $\phi$ be an $L$-coloring of $(G,\sigma)$.
If $\sigma'$, $L'$ and $\phi'$ are obtained from $\sigma$, $L$ and $\phi$ by switching at a subset of $V(G)$, then $\phi'$ is an $L'$-coloring of
$(G,\sigma')$. Furthermore, two switching equivalent signed graphs have the same signed choice number.
\end{proposition}

Schweser and Stiebitz \cite{choosable} generalized several classical coloring results 
to signed (multi-) graphs. They generalized Theorem \ref{Brooks_MRS}
and proved a list version as well as a 
degree version of this result. First they observed that
$\chi^l_\pm$ can be incorporated in a chain of inequalities that is an extension of the Brooks type formula stated earlier. 

\begin{proposition} [{\cite{choosable}}]
	Every signed graph $(G,\sigma)$ satisfies
	\begin{displaymath}
	\chi_\pm ((G,\sigma)) \leq \chi^l_\pm((G,\sigma)) \leq col((G,\sigma)) \leq \Delta(G)+1.
	\end{displaymath}
\end{proposition}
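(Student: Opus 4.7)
The plan is to prove the chain of inequalities one at a time, each being essentially a direct consequence of the definitions or of a result already stated in the excerpt.

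For the first inequality $\chi_{\pm}((G,\sigma)) \leq \chi^l_{\pm}((G,\sigma))$, I would just reiterate the observation made in the paragraph immediately preceding the proposition. Let $k = \chi^l_{\pm}((G,\sigma))$ and consider the constant $k$-assignment $L$ defined by $L(v) = M_k$ for every $v \in V(G)$. Since $(G,\sigma)$ is $k$-choosable, it is $L$-colorable, and any such $L$-coloring is by definition a proper signed coloring using colors in $M_k$, i.e.~a $k$-coloring in the sense of M\'a\v{c}ajov\'a, Raspaud and \v{S}koviera. Hence $\chi_{\pm}((G,\sigma)) \leq k$.

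For the second inequality $\chi^l_{\pm}((G,\sigma)) \leq \mathrm{col}((G,\sigma))$, I would invoke Theorem \ref{choosable}. Set $c = \mathrm{col}((G,\sigma))$. By definition of the coloring number, every subgraph of $G$ has a vertex of degree at most $c-1$, so $G$ is $(c-1)$-degenerate. Theorem \ref{choosable} then yields that $(G,\sigma)$ is $c$-choosable, which gives $\chi^l_{\pm}((G,\sigma)) \leq c$.

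For the third inequality $\mathrm{col}((G,\sigma)) \leq \Delta(G) + 1$, I would use the standard unsigned argument, valid here because, as the paper notes, the coloring number does not depend on the signature. Every subgraph $H$ of $G$ has minimum degree $\delta(H) \leq \Delta(H) \leq \Delta(G)$, so the maximum over all subgraphs of the minimum degree is at most $\Delta(G)$, and adding $1$ yields the claim.

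I do not anticipate a genuine obstacle here: each step is a one-line appeal to the relevant definition or to Theorem \ref{choosable}. The only point that deserves a brief remark is the clarification that $\mathrm{col}((G,\sigma))$ is a property of the underlying unsigned $G$ (so that both $\mathrm{col}((G,\sigma)) \leq \Delta(G)+1$ and the degeneracy hypothesis of Theorem \ref{choosable} make sense from the value of $\mathrm{col}$), which the excerpt already emphasizes when it writes ``the coloring number is unchanged under any signature assignment to $G$.''
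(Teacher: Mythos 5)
Your proof is correct and follows essentially the same route the paper sketches around the proposition: the first inequality is the paper's own remark about the constant assignment $L(v)=M_k$, the second is Theorem \ref{choosable} applied with $d = col((G,\sigma))$ via the observation that $col((G,\sigma))=k+1$ implies $k$-degeneracy, and the third is the standard bound $\delta(H)\leq\Delta(G)$ over all subgraphs $H$, using that the coloring number is independent of the signature. Nothing is missing.
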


A signed graph $(G,\sigma)$ is called degree-choosable if $(G,\sigma)$ is $f$-list-colorable for the degree function $ f(v) = d_G(v)$ for all $v \in V(G)$. Not every signed graph is degree-choosable but every signed graph $(G,\sigma)$ is $f$-list-colorable if $f(v) = d_G(v) +1$ for every $v \in V(G)$. 
For the following we will recall the notion of a block 
of $G$, a maximal connected subgraph of $G$ that has no separating vertex. A
signed graph $(G,\sigma)$ is called a brick if $(G,\sigma)$ is a balanced complete graph, a balanced odd circuit, an unbalanced even circuit, a $\pm K_n$ for an integer $n \geq 2$, or a $\pm C_n$ for an odd integer $n \geq 3$. One can see that this class of signed graphs is an extension of the extremal cases for Brooks' type theorem for signed graphs. 

\begin{theorem} [{\cite{choosable}}] \label{char_Stiebitz}
Let $(G,\sigma)$ be a connected signed graph. Then $(G,\sigma)$ is not degree-choosable if and only if each block of $(G,\sigma)$ is a brick.
\end{theorem}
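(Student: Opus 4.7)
The plan is to prove the two directions of the biconditional separately, following the template of the classical Erdős--Rubin--Taylor theorem for unsigned graphs (whose obstructions to degree-choosability are Gallai trees, i.e.\ graphs whose every block is a complete graph or an odd cycle). The signed version simply enlarges the set of block-obstructions to the five brick types, so the classical strategy can be adapted, with extra care for unbalanced even circuits and signed extensions.

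For the direction ``every block is a brick $\Rightarrow$ not degree-choosable'', I would first exhibit, for each brick type, an explicit list-assignment $L$ with $|L(v)| = d_G(v)$ admitting no proper signed list-coloring. For a balanced complete graph, resigning to $\texttt{\bf 1}$ reduces to the unsigned case and the constant list $\{1,\dots,n-1\}$ works. For a balanced odd circuit, resigning to $\texttt{\bf 1}$ again reduces to the classical odd-cycle $2$-list construction. For an unbalanced even circuit, I would use an analogous $2$-list argument along the cycle, where the sign of the unique negative edge (after suitable resigning) plays the parity role of the ``closing edge'' of an odd cycle and forces a contradiction. Signed extensions of $K_n$ and of odd $C_n$ are handled by scaled versions of the same constructions, exploiting that in an extension each underlying edge is present with both signs so that the list at each endpoint must also forbid the inverse. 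To pass from individual bricks to a general graph whose blocks are all bricks, I would combine bad lists block by block along the block-tree: at a cut vertex $v$ the degree splits as $d_G(v) = \sum_{B\ni v} d_B(v)$, and by choosing the block-lists to force a prescribed color at each cut vertex from each side, one glues them into a single bad list-assignment for $(G,\sigma)$.

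For the direction ``some block is not a brick $\Rightarrow$ degree-choosable'', I would proceed by induction on $|V(G)|$. End-blocks can be peeled off first: if $(G,\sigma)$ has an end-block $B$ attached at a cut vertex $v$, then every vertex of $B\setminus\{v\}$ has one extra unit of list-freedom after the rest of $G$ is colored, so that $B$ can be extended greedily whenever $B$ itself is degree-choosable (which follows by induction on the non-brick end-block or directly if $B$ is a brick and the rest of $G$ contains the non-brick block). This reduces the problem to proving that any $2$-connected signed graph $(B,\sigma)$ which is not a brick is degree-choosable. For this I would establish a signed Rubin-type structural lemma: any such $(B,\sigma)$ contains either an induced ``good'' circuit --- one whose parity and balance permit a $2$-element list to be satisfied --- with at most one chord, or a non-adjacent pair of vertices with a shared neighbor configuration allowing a list reduction to a strictly smaller graph.

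The main obstacle is precisely the signed Rubin-type structural lemma and its use. In the unsigned case it is enough to find an even induced cycle with at most one chord; here, however, the cycle's length and balance must interact correctly with the signature on its chord and with the two-element lists, so that a valid list-coloring can always be completed regardless of the lists at the attachment vertices. A careful case analysis on balance, with resigning used wherever possible to reduce to the unsigned argument, together with a separate treatment of the new brick types (unbalanced even circuits and signed extensions) to verify that avoiding them indeed yields a usable substructure, is what this proof will ultimately hinge on.
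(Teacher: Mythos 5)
The survey you were given contains no proof of this theorem; it is quoted from Schweser and Stiebitz \cite{choosable}, whose argument follows exactly the Erd\H{o}s--Rubin--Taylor/Borodin template you describe, with bricks replacing Gallai-tree blocks. Your treatment of the easy direction is essentially sound: the bad lists you sketch do work once made precise (for an unbalanced even circuit, resign so that exactly one edge is negative and give every vertex the list $\{1,-1\}$; positive edges force alternation, so the two ends of the negative edge receive opposite colors, violating $c(u)\neq -c(v)$; for $\pm K_n$ take $\{\pm 1,\dots,\pm(n-1)\}$, since parallel positive/negative edges force $|c(u)|\neq|c(v)|$ and pigeonhole applies), and the block-tree gluing is the standard one, except that you should glue with palettes having pairwise disjoint \emph{absolute values} (not merely disjoint color sets), since negative edges relate $c(u)$ to $-c(v)$; ``forcing a prescribed color at each cut vertex'' is not how the classical construction goes and is generally not achievable with lists.

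There are, however, two genuine gaps. First, and most seriously, the signed Rubin-type structural lemma is the entire mathematical content of the hard direction, and you only name it. It cannot be obtained by applying Rubin's lemma to the underlying graph, because whether a found even cycle is usable depends on its balance: the same underlying even cycle is a brick when unbalanced and degree-choosable material when balanced. One must prove, signature-aware, that every $2$-connected signed graph that is not a brick contains a suitable induced configuration (e.g.\ a balanced even or unbalanced odd cycle with at most one chord, or an appropriate theta-type subgraph) \emph{and} show the completion argument that propagates $2$-element lists through it; none of this is in your sketch, and the new brick types (unbalanced even circuits, signed expansions) make the case analysis materially harder than in the unsigned setting. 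Second, your end-block peeling step as stated fails: if $B$ is a \emph{brick} end-block with cut vertex $v$ and you color $G-(B-v)$ first, each neighbor $u$ of $v$ in $B$ is left with only $d_B(u)-1$ usable colors, and degree-choosability of $B$ says nothing about this precolored situation --- for a balanced complete end-block $K_n$ you are reduced to coloring $K_{n-1}$ from lists of size $n-2$, which can fail. The correct mechanism is the reverse: fix a non-brick block $B^{*}$, color all vertices outside $B^{*}$ greedily in order of decreasing distance to $B^{*}$ (each such vertex still has an uncolored neighbor when processed, so its list suffices), and only then invoke the degree-choosability of $B^{*}$, whose vertices $u$ retain at least $d_{B^{*}}(u)$ colors.
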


The following corollary is a Brooks' type theorem for the list-chromatic number of signed graphs.

\begin{corollary} [{\cite{choosable}}]
Let $(G,\sigma)$ be a connected signed graph. If $(G,\sigma)$ is not a brick, then 
$\chi^l_\pm((G,\sigma)) \leq \Delta(G)$.
\end{corollary}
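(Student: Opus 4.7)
The plan is to split based on the block structure of $(G,\sigma)$. If some block of $(G,\sigma)$ fails to be a brick, then by Theorem \ref{char_Stiebitz}, $(G,\sigma)$ is degree-choosable; any $\Delta(G)$-assignment $L$ contains a sub-assignment of sizes $d_G(v) \leq \Delta(G)$, so $(G,\sigma)$ is already $\Delta(G)$-list-colorable in this case.

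Suppose instead that every block of $(G,\sigma)$ is a brick. Since $(G,\sigma)$ is not itself a brick, it has at least two blocks, so one can pick a leaf block $B$ meeting the rest of $(G,\sigma)$ at a cut vertex $v$. A case-by-case inspection through the catalog of bricks --- the balanced $K_n$ is $(n-1)$-regular, balanced odd and unbalanced even circuits are $2$-regular, and the listed signed extensions inherit the regularity of their underlying graphs --- shows that every brick is a regular signed graph. Writing $k$ for the common degree in $B$, every $u \in V(B) \setminus \{v\}$ satisfies $d_G(u) = d_B(u) = k$, while $v$ has $d_G(v) > d_B(v) = k$ since it is also incident to edges outside $B$. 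Consequently $\Delta(G) \geq d_G(v) \geq k+1$, and any such $u$ satisfies $d_G(u) < \Delta(G)$.

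Next I would invoke the following auxiliary fact: if $(H,\tau)$ is a connected signed graph and $f : V(H) \to \mathbb{N}$ satisfies $f(w) \geq d_H(w)$ for every $w$ with strict inequality at some vertex $v_0$, then $(H,\tau)$ is $f$-choosable. The proof is greedy --- order $V(H)$ by non-increasing distance from $v_0$ in a BFS tree rooted at $v_0$ (so that $v_0$ is processed last) and list-color each vertex when it is reached. Every $u \neq v_0$ still has its BFS parent uncolored, so at most $d_H(u)-1$ of its list colors are forbidden, leaving at least $f(u) - (d_H(u)-1) \geq 1$ available; at $v_0$ all $d_H(v_0)$ neighbors are colored, but $f(v_0) > d_H(v_0)$ guarantees a usable color. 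The argument goes through in the signed setting because each already-colored neighbor contributes only one forbidden color. Applying this fact to $(G,\sigma)$ with $f \equiv \Delta(G)$ and $v_0 := u$ from the second paragraph yields $\chi_{\pm}^l((G,\sigma)) \leq \Delta(G)$.

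The hard part is the case where every block of $(G,\sigma)$ is a brick, because there Theorem \ref{char_Stiebitz} yields no direct conclusion and degree-choosability actually fails. The key insight that unlocks this case is the strict inequality $d_G(u) < \Delta(G)$ at some vertex of the leaf block $B$, produced precisely by the regularity of the brick $B$ combined with $v$ being a cut vertex of $(G,\sigma)$; this provides exactly the slack required by the greedy lemma.
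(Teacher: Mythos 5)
Your proof is correct and takes essentially the route the paper intends: the corollary is placed immediately after Theorem \ref{char_Stiebitz} as its consequence, and the standard derivation (given in the cited work of Schweser and Stiebitz) is exactly your dichotomy --- if some block is not a brick, degree-choosability with $d_G(v) \leq \Delta(G)$ gives the bound, while if every block is a brick, the regularity of each brick type in the catalog forces a non-cut vertex of a leaf block to have degree strictly below $\Delta(G)$, after which greedy coloring toward that vertex finishes. The survey omits the details, but both of your cases are sound, including the signed version of the greedy lemma, since each edge $e=uw$ to an already-colored neighbor forbids exactly the single color $\sigma(e)\phi(w)$.
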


With the theory of list-coloring of signed graphs, naturally the notion of list-critical signed graphs emerges.
Let $(G,\sigma)$ be a signed graph and let $L$ be a list assignment of $(G,\sigma)$. 
The signed graph $(G,\sigma)$ is called $L$-critical if $(G,\sigma)$ is not $L$-colorable, but every proper subgraph of $(G,\sigma)$ is. Particularly, if $L$ is a $(k-1)$-list-assignment, we call $(G,\sigma)$ $k$-list-critical. Also, a signed graph $(G,\sigma)$ is called $k$-critical if $\chi_\pm((G,\sigma))=k$ and for every proper subgraph $(H,\sigma')$ of $(G,\sigma)$, $\chi_\pm((H,\sigma'))\leq k-1$. With the same argument as before, we see that every $k$-critical signed graph is $k$-list-critical. A signed graph $(G,\sigma)$ is called $k$-choice-critical if $\chi^l_{\pm}((G,\sigma)) = k$ and for every proper subgraph $(H,\sigma')$ of $(G,\sigma)$ we have $\chi^l_{\pm}((G,\sigma)) \leq k-1$. Again, we get the result that every $k$-choice-critical signed graph is $k$-list-critical. 
Especially the class of 3-critical signed graphs can be characterized easily.

\begin{lemma} [{\cite{choosable}}]
A signed graph is 3-critical if and only if it is a balanced odd circuit or an unbalanced even circuit.
\end{lemma}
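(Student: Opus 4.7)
The plan is to combine the earlier characterization $\chi_\pm((G,\sigma))\leq 2$ if and only if $(G,\sigma)$ is antibalanced with the monotonicity of $\chi_\pm$ under subgraphs. The key observation is that the very obstruction to being antibalanced is itself a signed circuit of exactly the type we are trying to characterize, so criticality will force $G$ to coincide with this obstruction.

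For the ($\Leftarrow$) direction, let $(G,\sigma)$ be a balanced odd circuit or an unbalanced even circuit. In both cases the characterization ``$(G,\sigma)$ is antibalanced iff every even circuit has sign product $+1$ and every odd circuit has sign product $-1$'' is violated, so $(G,\sigma)$ is not antibalanced and thus $\chi_\pm((G,\sigma))\geq 3$. The upper bound $\chi_\pm((G,\sigma))\leq 3$ follows from the degeneracy lemma since a circuit is $2$-degenerate. Any proper subgraph of a circuit is a forest, and by Proposition \ref{number_non_equivalent} every signature on a forest is equivalent to $\texttt{\bf -1}$; hence proper subgraphs are antibalanced and have $\chi_\pm\leq 2$. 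Therefore $(G,\sigma)$ is $3$-critical.

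For the ($\Rightarrow$) direction, let $(G,\sigma)$ be $3$-critical. Since $\chi_\pm((G,\sigma))=3$, the graph is not antibalanced, so by the characterization above there must exist a circuit $C\subseteq G$ such that $(C,\sigma|_{E(C)})$ is either an odd circuit with sign product $+1$ (a balanced odd circuit) or an even circuit with sign product $-1$ (an unbalanced even circuit). By the $(\Leftarrow)$ direction already established, $\chi_\pm((C,\sigma|_{E(C)}))=3$. Because any coloring of $(G,\sigma)$ restricts to a coloring of any subgraph, $\chi_\pm$ is monotone under subgraph inclusion, and criticality forces every proper subgraph of $(G,\sigma)$ to satisfy $\chi_\pm\leq 2$. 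Thus $(C,\sigma|_{E(C)})$ cannot be a proper subgraph, so $(G,\sigma)=(C,\sigma|_{E(C)})$, and $(G,\sigma)$ is a balanced odd circuit or an unbalanced even circuit.

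I do not anticipate a serious obstacle: both directions follow once one recognises that the two exceptional families in the antibalanced dichotomy already account for all ``local'' obstructions, and criticality promotes a local obstruction to a global identification. The only mildly delicate point is verifying that proper subgraphs of a circuit really are antibalanced (handled by the uniqueness of signatures on a forest) and that the constructed $C$ inherits a signature of the required type, which is immediate from restricting $\sigma$.
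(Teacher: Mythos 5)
Your proof is correct, and it is worth noting that this survey states the lemma without proof (citing \cite{choosable}), so there is no in-paper argument to match; what you give is a clean, self-contained derivation from facts the survey does quote. Both directions check out: the antibalance dichotomy (antibalanced iff every even circuit is positive and every odd circuit negative, and antibalanced iff $\chi_\pm \leq 2$) gives the lower bound $\chi_\pm \geq 3$ for the two circuit types and, in the converse direction, produces the obstruction circuit $C$; the $2$-degeneracy lemma gives $\chi_\pm \leq 3$; and the fact that proper subgraphs of a circuit are forests, which carry a unique equivalence class of signatures (Proposition on $2^{m-n+c}$ classes) and are hence antibalanced, yields criticality. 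Your route differs from the original one of Schweser and Stiebitz, who obtain this lemma inside their list-coloring machinery, where balanced odd circuits and unbalanced even circuits arise as the $2$-regular members of their class of bricks (cf.\ the degree-choosability characterization, Theorem \ref{char_Stiebitz}); your argument is more elementary and buys independence from that framework, while theirs situates the lemma as the base case of a more general structure theory. Two small remarks: the appeal to monotonicity of $\chi_\pm$ in the $(\Rightarrow)$ direction is superfluous, since the definition of $3$-critical already forces every proper subgraph to have $\chi_\pm \leq 2$, so $C$ cannot be proper; and your $(\Leftarrow)$ direction implicitly assumes circuits of length at least $2$ --- a ``balanced odd circuit'' of length $1$ is a positive loop, which admits no coloring at all, and the degeneracy lemma does not apply to it --- but this degenerate case is excluded by the conventions of \cite{choosable} and does not affect the substance of the argument.
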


The following lemma states some of the basic properties of list-critical signed graphs.

\begin{lemma} [{\cite{choosable}}] 
Let $(G,\sigma)$ be an $L$-critical signed graph for a list-assignment $L$. Let $H=\{v : d_G(v) > |L(v)| \}$ and 
$F=V(G) -H$ and let $\emptyset \neq X \subseteq F$. The following statements hold true.
\renewcommand{\labelenumi}{(\roman{enumi})}
\begin{enumerate}
\item $d_G(v) = |L(v)|$ for all $v \in X$. 
\item Every block of $(G[X],\sigma|_{E(G[X])})$ is a brick.
\item If $L$ is a $(k-1)$-list-assignment with $k \geq 1$, then $ H \neq \emptyset $ or $(G,\sigma)$ is a brick. Furthermore, if $(G[X],\sigma|_{E(G[X])})$ contains a $K_k$, then $(G,\sigma)$ is a balanced complete graph of order $k$.
\end{enumerate}
\end{lemma}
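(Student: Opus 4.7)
The overall strategy for both parts is the classical list-critical reduce-and-extend argument, with Theorem~\ref{char_Stiebitz} supplying the main coloring tool: a connected signed graph is degree-choosable if and only if some block of it is not a brick.

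For (i) I argue by contradiction. Suppose $B$ is a block of $(G,\sigma){:}X$ that is not a brick. Viewed as a stand-alone signed graph, $(B,\sigma|_B)$ is connected and has only itself as a block, so Theorem~\ref{char_Stiebitz} gives that $(B,\sigma|_B)$ is degree-choosable. By $L$-criticality the signed subgraph obtained by deleting $V(B)$ from $(G,\sigma)$ admits an $L$-coloring $\phi$ (taken to be empty if $V(B)=V(G)$). For each $v\in V(B)$ define the residual list
\[ L'(v)=L(v)\setminus\{\sigma(uv)\phi(u):u\in N_G(v)\setminus V(B)\}. \]
Since $v\in X\subseteq F$ we have $|L(v)|\geq d_G(v)$, and at most $d_G(v)-d_B(v)$ colors are removed, so $|L'(v)|\geq d_B(v)$. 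Degree-choosability of $(B,\sigma|_B)$ produces an $L'$-coloring $\psi$; since every edge of $G$ with both ends in $V(B)$ must lie in $B$ (two blocks share at most one vertex), the map $\phi\cup\psi$ is a proper $L$-coloring of $(G,\sigma)$, contradicting $L$-criticality.

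For (ii) I interpret ``list-assignment with $k\geq 1$'' as a $(k-1)$-list-assignment, so that $L$-criticality matches the paper's $k$-list-critical convention. Suppose $H=\emptyset$, i.e.\ $\Delta(G)\leq k-1$. Any $L$-critical signed graph is connected (disjoint components would individually be $L$-colorable proper subgraphs whose colorings combine), so if some vertex had degree $<k-1$ we could extend greedily from an $L$-coloring of the remainder, contradicting $L$-criticality; hence $G$ is $(k-1)$-regular. Applying (i) with $X=V(G)=F$, every block of $(G,\sigma)$ is a brick; since every listed brick type has all vertices of equal underlying-graph degree, no cut vertex can exist (it would have strictly smaller degree in some incident brick-block than the ambient $k-1$), so $(G,\sigma)$ is its own single block and is itself a brick. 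For the ``Furthermore'' clause, a $K_k$-subgraph of the $(k-1)$-regular $G$ already accounts for the full neighborhood of each of its vertices, so $K_k$ is a union of components of $G$; by connectivity $G=K_k$, and among brick types only the balanced complete graph has a simple underlying $K_k$, forcing $(G,\sigma)$ to be the balanced complete graph of order $k$. The main technical obstacle is the uniformity-of-degree claim for bricks---clear for balanced $K_n$ and for circuits, but requiring a careful unpacking of ``signed extension'' (as a $\pm$-style multigraph construction) for the remaining types; once this is granted, the cut-vertex elimination and the ``Furthermore'' step are routine bookkeeping on residual lists and proper-subgraph restrictions of $L$.
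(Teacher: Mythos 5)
The survey states this lemma without proof---it is quoted verbatim from \cite{choosable}---so the only benchmark is the argument of Schweser and Stiebitz, and yours is essentially that argument: for (i), criticality plus Theorem~\ref{char_Stiebitz} applied to a non-brick block with residual lists $L'(v)=L(v)\setminus\{\sigma(uv)\phi(u)\}$ of size at least $d_B(v)$; for (ii), connectivity of critical graphs, the min-degree bound $d_G(v)\geq|L(v)|$, part (i) with $X=V(G)$, and regularity of bricks to kill cut vertices. Both parts are sound, and the one obstacle you flag is harmless: a \emph{signed extension} of $K_n$ (resp.\ of odd $C_n$) is, up to resigning, the multigraph $\pm K_n$ (resp.\ $\pm C_n$) in which every edge is doubled into a positive--negative pair, so it is $2(n-1)$-regular (resp.\ $4$-regular); hence every brick type is regular in the multigraph-degree sense that your lists require, and the cut-vertex elimination goes through. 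Note also that your silent restriction of the ``Furthermore'' clause to the case $H=\emptyset$ is not a gap but a necessary repair of the survey's loose wording: without that hypothesis the claim is false already for unsigned graphs, e.g.\ the all-positive bowtie (two triangles sharing a vertex $v$) with lists $\{1,2\},\{1,2\}$ on one triangle, $\{3,4\},\{3,4\}$ on the other, and $L(v)=\{1,3\}$ is $L$-critical for a $2$-assignment and contains a balanced $K_3$ without being one; there $H=\{v\}\neq\emptyset$, so the statement survives only in your scoped form. Likewise your reading of the garbled hypothesis as a $(k-1)$-assignment is the only one consistent with the conclusion ``balanced complete graph of order $k$,'' since a $k$-assignment would force $k$-regularity and make a spanning $K_k$ impossible.
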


This lemma implies, that for a $k$-list-critical signed graph $(G,\sigma)$ the minimum degree $\delta(G)$ is at least $k-1$ and so we have a lower bound for the number of edges in a $k$-list-critical signed graph of the form $|E((G,\sigma))| \geq \frac{1}{2}(k-1)|V(G)|$. This bound can be improved 
to $\frac{1}{2} \left(k-1+\frac{k-3}{k^2-3}\right)|V(G)|$ for certain signed simple graphs \cite{choosable}.

Major results for list coloring planar graphs can be generalized to planar signed simple 
graphs. For example, the next theorem is an extension of Thomassen's work in \cite{UnsignedPlanar} to signed graphs and its proof uses the same method.

\begin{theorem} [{\cite{PlanarChoose}}]
	Every planar signed simple graph is 5-choosable.
\end{theorem}

A result of Voigt \cite{ListColor} on the existence of not 4-choosable planar graphs
is extendable to signed graphs, too. Interestingly, there are signed graphs with this property such that their underlying graphs are 4-choosable.

\begin{theorem} [{\cite{PlanarChoose}}]
	There exists a planar signed simple graph $(G,\sigma)$ that is not 4-choosable but $G$ is
	4-choosable.
\end{theorem}

\begin{theorem} [{\cite{PlanarChoose}}]
	Let $(G,\sigma)$ be a planar signed simple graph. For each $k \in \{3,4,5,6\}$, if $(G,\sigma)$ has no $k$-circuits, 
	then $(G,\sigma)$ is 4-choosable.
\end{theorem}

Furthermore, the proof of Thomassen in \cite{girthChoose} regarding the 3-choosability of every planar graph of girth at least 5 also works for signed graphs.

\begin{theorem} [{\cite{PlanarChoose}}]
	Every planar signed simple graph with neither 3-circuit nor 4-circuit is 3-choosable.
\end{theorem}

Recently, Kim and Yu \cite{Kim_Yu_2019} proved that every planar simple graph with no 4-circuits adjacent to 3-circuits is 4-choosable.

\section{Coloring generalized signed graphs} \label{Zhu}

Jin et al.~generalize the concepts of sections \ref{MRS} and \ref{KS} in \cite{JDY_Zhu_2019_Groetzsch, Jin_Zhu_S_labeled}. Before we establish this approach we need some further notation.   
In this context, a graph is considered as a symmetric digraph, where each edge $vw$ is replaced by two opposite arcs $e = (v,w)$ and
$e^{-1} = (w,v)$. Let $S$ be an inverse closed set of permutations of positive integers. An $S$-signature of $G$ is a  mapping
$\sigma : E(G) \rightarrow S$ such that $\sigma_{e^{-1}} = \sigma_e^{-1}$ for every arc $e$, and $(G,\sigma)$ is called an
$S$-signed graph. Let $[k] = \{1,2, \dots,k\}$. A $k$-coloring of $(G,\sigma)$ is a mapping $c: V(G) \rightarrow [k]$ 
such that $\sigma_e(c(v)) \not = c(w)$ for each arc $e=(v,w)$. The graph $G$ is $S$-$k$-colorable if $(G,\sigma)$ is $k$-colorable for every $S$-signature of $G$. 

The image of an integer $a \not \in [k]$ is irrelevant in an $S$-$k$-coloring of a graph $G$. Analogously, if $a \in [k]$ and
$\pi(a) \not \in [k]$, then $\pi(a)$ is irrelevant. In this sense, 
only permutations which are bijections between
subsets of $[k]$ are considered in \cite{JDY_Zhu_2019_Groetzsch, Jin_Zhu_S_labeled}.

$S$-$k$-coloring of graphs generalizes some further well known notions of coloring. We summarize: 

\begin{proposition} [\cite{JDY_Zhu_2019_Groetzsch}] Let $S$ be a subset of $S_k$.
\begin{itemize}
\item If $S = \{id\}$, then $S$-$k$-coloring is identical to conventional $k$-coloring.
\item If $S = \{ id, (1,2) (3,4) \dots ((2q-1),2q)$ and $q = \lfloor \frac{k}{2} \rfloor$, then $S$-$k$-coloring is 
	identical to $k$-coloring (Section \ref{MRS}).
\item If $S = \{ id, (1,2) (3,4) \dots ((2q'-1),2q')$ and $q' = \lceil \frac{k}{2} \rceil-1$, then $S$-$k$-coloring is 
	identical to modular $k$-coloring (Section \ref{KS}).
\item If $S = S_k$, then $S$-$k$-coloring is identical to DP-$k$-coloring, as defined in \cite{DP_Coloring_Def}.
\item If $S = <(1,2, \dots,k)>$ is the cyclic group generated by the permutation $(1,2, \dots,k)$, then
$S$-$k$-coloring is identical to $\mathbb {Z}_k$-coloring, as defined by Jaeger, Linial, Payan and Tarsi \cite{JLPT}.
\end{itemize}
\end{proposition}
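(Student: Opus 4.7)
The statement is a catalogue of five instantiations of the $S$-$k$-coloring framework, each recovering a classical coloring concept. My plan is to treat each case in turn as a direct unpacking of definitions; there is no deep obstruction, only the bookkeeping needed to align the $S$-signed setup with the corresponding classical setup.

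For the first bullet, with $S=\{\mathrm{id}\}$, every $S$-signature assigns $\mathrm{id}$ to every arc, so the defining condition $\sigma_e(c(v))\neq c(w)$ for each arc $e=(v,w)$ collapses to $c(v)\neq c(w)$. This is the conventional $k$-coloring, and the observation that $S$ has a single element means requiring $S$-$k$-colorability is the same as being $k$-colorable.

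For the second and third bullets, fix the involution $\pi$ (resp.\ $\pi'$). An $S$-signature then assigns each arc one of two values and is involutive by construction (since $\pi=\pi^{-1}$), so $\sigma_{e^{-1}}=\sigma_e^{-1}$ holds automatically; thus each $S$-signature is determined by the set of edges labelled $\pi$ (resp.\ $\pi'$), i.e.\ by a sign function where these edges are declared negative. Under the identification of $[k]$ with $\{\pm1,\dots,\pm q\}$ for $k=2q$ (or with $\{0,\pm1,\dots,\pm q'\}$ for $k=2q'+1$) that sends the orbits of $\pi$ (resp.\ $\pi'$) to the pairs $\{i,-i\}$ (with the fixed point $0$ absorbed when $k$ is odd), the condition $\pi(c(v))\neq c(w)$ on a negative edge becomes $-c(v)\neq c(w)$, that is, $c(v)\neq-c(w)=\sigma(vw)c(w)$. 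This is precisely the $k$-coloring of \cite{ModIntro} for the even case and the modular $k$-coloring of \cite{Circular} for the odd case, so the corresponding $S$-chromatic numbers match $M_\chi(G)$ and $M_{\chi_\pm}(G)$ as already noted in the text preceding the proposition.

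For the cyclic case $S=\langle(1,2,\dots,k)\rangle$, every element of $S$ acts on $[k]\cong\mathbb{Z}_k$ as a translation $x\mapsto x+t_e\pmod k$. An $S$-signature is therefore equivalent to an orientation-consistent labelling $t:E(G)\to\mathbb{Z}_k$ of the arcs with $t_{e^{-1}}=-t_e$, and the coloring condition $\sigma_e(c(v))\neq c(w)$ becomes $c(w)-c(v)\neq t_e\pmod k$. This is exactly the definition of $\mathbb{Z}_k$-coloring of \cite{JLPT}. Finally, for $S=S_k$, an $S$-signature assigns to every arc an arbitrary permutation of $[k]$, with the only constraint being $\sigma_{e^{-1}}=\sigma_e^{-1}$; interpreting $\sigma_e$ at the arc $e=(v,w)$ as the matching on $\{v\}\times[k]$ and $\{w\}\times[k]$ that pairs $(v,i)$ with $(w,\sigma_e(i))$, the forbidden condition $\sigma_e(c(v))=c(w)$ says exactly that the chosen colors form a matched pair. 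This matches the correspondence (DP) coloring formalism of \cite{DP_Coloring_Def}.

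The only step that is at all subtle is the fourth case, since the $S_k$-to-DP correspondence is less mechanical than a relabelling: one must exhibit the bijection between $S_k$-signatures and covers in the DP sense and check that colorings transfer in both directions. The remaining four cases are routine definitional verifications, so I would present them compactly and devote most of the written proof to spelling out this DP identification carefully.
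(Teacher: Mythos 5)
The paper itself offers no proof of this proposition: it is a survey item imported from \cite{JDY_Zhu_2019_Groetzsch}, and indeed the survey arranges matters so that bullets two and three are nearly definitional in situ --- just before the proposition it \emph{defines} ``$k$-coloring'' and ``modular $k$-coloring'' of a signed graph directly through the permutations $\pi$ and $\pi'$. So there is no internal argument to compare against, and your plan of a case-by-case definitional unpacking is exactly the intended style of proof. Your extra work in bullets two and three --- identifying the orbits of $\pi$ (resp.\ $\pi'$) with the pairs $\{i,-i\}$ --- is what connects the $S$-signed formulation back to the colorings of Sections \ref{MRS} and \ref{KS}, consistent with the survey's remark that the corresponding $S$-chromatic numbers are $M_{\chi_\pm}(G)$ and $M_{\chi}(G)$. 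Bullets one and five are correct as you state them: for $S=\{id\}$ the condition collapses to $c(v)\neq c(w)$, and for the cyclic group each $\sigma_e$ acts on $[k]\cong\mathbb{Z}_k$ as a translation $x\mapsto x+t_e$ with $t_{e^{-1}}=-t_e$, recovering the forbidden-difference condition of \cite{JLPT}.

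Two points in your write-up need repair or completion. First, your identifications in bullets two and three each cover only one parity. In bullet two with $k=2q+1$ odd, $\pi$ fixes the element $k$, which must be sent to the self-inverse color $0$ of $M_k$ from \cite{ModIntro}; you only treat $k=2q$. In bullet three with $k$ even, $\pi'$ has \emph{two} fixed points ($k-1$ and $k$), and the target you name, $\{0,\pm 1,\dots,\pm q'\}$, has only $2q'+1=k-1$ elements, so the identification as stated fails; one must identify $[k]$ with $\mathbb{Z}_k$ itself, sending the two fixed points to the two self-inverse elements $0$ and $k/2$, after which the negative-edge condition reads $c(v)\not\equiv -c(w) \pmod{k}$, matching the $(k,1)$-colorings of \cite{Circular}. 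Second, in the DP case there is no bijection between $S_k$-signatures and DP-covers, contrary to what your plan suggests: a cover in the sense of \cite{DP_Coloring_Def} carries matchings between the fibres over adjacent vertices that need not be perfect, so $S_k$-signatures correspond only to covers whose matchings are all perfect. The missing (easy but necessary) observation is monotonicity: enlarging a matching only adds constraints, so every partial matching may be extended to a perfect one without making coloring easier, whence $G$ is DP-$k$-colorable if and only if it is $S_k$-$k$-colorable. Without this remark your argument establishes only one direction of the fourth equivalence; with these two repairs your outline is a complete proof.
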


Jin et al.~\cite{Jin_Zhu_S_labeled} also give an equivalent formulation of a $k$-coloring of a gain graph 
$(G,\phi)$ 
with gain group $\Gamma$, see \cite{Zaslavsky_gain_1995}, in terms of an $S$-$k'$-coloring of $(G,\sigma)$.
Coloring gain graphs is a separate topic which we will not discuss. Note that some facets of coloring gain graphs are already studied in \cite{Zaslavsky_Totally_Frustrated}.

Note that for $S = \{ id, (1,2) (3,4) \dots ((2q-1),2q)$ and $q = \lfloor \frac{k}{2} \rfloor$, 
a graph $G$ is $S$-$k$-colorable if and only if $k \geq M_{\chi_{\pm}}(G)$.
An analogous statement is true for modular $k$-coloring. 

The papers \cite{JDY_Zhu_2019_Groetzsch, Jin_Zhu_S_labeled, Kardos_etal_2019, Zhu_Note_2017, Zhu_non_Z_4_colorable}
focus on planar graphs. Two subsets $S$ and $S'$ of $S_k$ are conjugate if there is a permutation $\pi$ in $S_k$
such that $S'=\{\pi \sigma \pi^{-1}:  \sigma \in S\}$. 
A subset $S$ of $S_4$ is good if every planar graph is $S$-4-colorable, and it is bad if it is not good. 
Hence, by the 4-Color-Theorem, $\{id\}$ is good. 
M\'{a}\v{c}ajov\'{a}, Raspaud, and \v{S}koviera \cite{ModIntro} conjectured that 
$\{id, (1,2) (3,4)\}$ and Kang and Steffen conjectured that $\{id, (1,2)\}$ is good. Therefore, 
a natural question is whether there are subsets of $S_4$ which are good.
  
Jin et al.~\cite{Jin_Zhu_S_labeled} completely answer the aforementioned question for subsets $S$ of $S_4$ which contain $id$. 
They summarize: 
Kr\'{a}l et al.~\cite{Kral_etal_2005} showed that the set $\{id, (1234),(13)(24),(1432)\}$ as well as the set 
$\{id,(12)(34),(13)(24),(14)(23)\}$ are not good. 
In a first version of \cite{Jin_Zhu_S_labeled_1} Jin et al.~excluded $\{id,(123)\}$, $\{id, (1234)\}$, 
$\{id, (12),(13)\}$, $\{id, (12)(34),(13)\}$, $\{id, (12)(34), (13)(24)\}$. Zhu \cite{Zhu_refinement_choose} shows 
that $\{id,(12)\}$ is bad, and therefore, he disproved the conjecture of Kang and Steffen.
Kardo\v{s} and Narboni \cite{Kardos_etal_2019} disproved the conjecture of M\'{a}\v{c}ajov\'{a}, Raspaud, and \v{S}koviera 
by showing that $\{id,(12),(34)\}$ is bad. The remaining two cases for $S \in \{\{id, (123)\}, \{id, (1234)\}\}$ are
shown to be bad in the revised version \cite{Jin_Zhu_S_labeled} of \cite{Jin_Zhu_S_labeled_1} by Jin et al..
This completes the proof of the following theorem.

\begin{theorem}
A subset $S$ of $S_4$ is good if and only if $S = \{id\}$. 
\end{theorem}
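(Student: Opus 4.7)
The plan is to split the biconditional. The forward direction is immediate from the 4-Color Theorem: if $S = \{id\}$, then every $S$-signature assigns $id$ to each arc, so the constraint $\sigma_e(c(v)) \neq c(w)$ collapses to $c(v) \neq c(w)$, which is the ordinary proper coloring condition, and every planar graph is $4$-colorable.

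For the converse, I would reduce to a small finite case analysis via two elementary observations. First, if $S \subseteq S'$ then every $S$-signature is also an $S'$-signature, so if $S$ is bad (i.e.\ witnessed by a non-$4$-colorable signed planar graph) then $S'$ is bad. In particular, every $S \ni id$ with $S \neq \{id\}$ contains a two-element subset $\{id,\pi\}$ with $\pi \neq id$, so it suffices to prove $\{id,\pi\}$ is bad for every $\pi \in S_4 \setminus \{id\}$. Second, for any $\tau \in S_4$, post-composing a coloring with $\tau$ sends $S$-signatures/colorings of a graph bijectively to $(\tau S \tau^{-1})$-signatures/colorings of the same graph; hence conjugate subsets of $S_4$ are simultaneously good or bad.

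Combining these, I would enumerate the conjugacy classes of $S_4 \setminus \{id\}$, namely transpositions, $3$-cycles, double transpositions, and $4$-cycles, and pick the representatives $(12)$, $(123)$, $(12)(34)$, $(1234)$. The theorem then reduces to verifying badness for exactly the four pairs $\{id,(12)\}$, $\{id,(123)\}$, $\{id,(12)(34)\}$, $\{id,(1234)\}$. Each of these is supplied by the cited literature as summarized in the excerpt: Zhu \cite{Zhu_refinement_choose} handles $\{id,(12)\}$ (disproving the Kang--Steffen conjecture); Kardo\v{s} and Narboni \cite{Kardos_etal_2019} handle $\{id,(12)(34)\}$ (disproving the M\'{a}\v{c}ajov\'{a}--Raspaud--\v{S}koviera conjecture); and the revised version of Jin et al \cite{Jin_Zhu_S_labeled} handles the remaining two, $\{id,(123)\}$ and $\{id,(1234)\}$. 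Together with the older Kr\'al et al.\ exclusions noted in the excerpt, every non-$\{id\}$ superset of $\{id\}$ in $S_4$ contains a pair conjugate to one of these four and is therefore bad.

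The bookkeeping above is routine; the real obstacle is the four bad-example constructions themselves, each of which is a delicate planar graph together with a specific signature forcing uncolorability, and each of which occupies the bulk of its source paper. Assuming those four results, the theorem follows immediately from the monotonicity and conjugation reductions and the observation that the four listed conjugacy classes exhaust $S_4 \setminus \{id\}$.
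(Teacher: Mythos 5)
Your proposal is correct and takes essentially the same route as the paper, whose own proof is exactly this assembly: $\{id\}$ is good by the 4-Color Theorem, and every larger set containing $id$ is shown bad by reducing --- via the monotonicity and conjugation observations you make explicit --- to the four cited cases $\{id,(12)\}$ (Zhu), $\{id,(12)(34)\}$ (Kardo\v{s}--Narboni), and $\{id,(123)\}$, $\{id,(1234)\}$ (Jin et al.). The only caveats are ones you share with the paper: the statement tacitly assumes $id \in S$ (otherwise, e.g., $S=\{(12)\}$ is good, since the constant coloring $c\equiv 1$ satisfies $(12)(1)=2\neq 1$ on every edge), and for $\pi$ a $3$- or $4$-cycle the pair $\{id,\pi\}$ must be read as its inverse closure so that $S$-signatures are well defined.
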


Jiang et al.~\cite{JDY_Zhu_2019_Groetzsch} study the question whether 
Gr\"otzsch's Theorem can be generalized to generalized signed graphs. 
A non-empty subset $S$ of $S_3$ is $TFP$-good, if every triangle-free planar graph is $S$-3-colorable. 
Gr\"otzsch Theorem says that $\{id\}$ is $TFP$-good. 
They proved that an inverse closed subset of $S_3$ 
not isomorphic to $\{id,(12)\}$ is $TFP$-good if and only if $S=\{id\}$. Hence, the only remaining open case 
for this question is whether $\{id, (12)\}$ is $TFP$-good. 

Using the relation between signed graph coloring and DP-coloring, Kim and Ozeki \cite{Ozeki_2019}
gave a structural characterization of graphs that do not admit a DP-coloring.
This result generalizes Theorem \ref{char_Stiebitz}.

In a recent paper, Zhu \cite{Zhu_refinement_choose} refines the concept
of choosability such that the two extremal cases are $k$-choosability and $k$-colorability and 
in between there are gradually changing concepts of coloring which depend on the possible partitions
of the integer $k$.  In \cite{Zhu_refinement_choose} it is shown that several kinds of generalized 
signed graph coloring can be expressed in terms of a refined choosability concept. 

Further graph parameters, which are closely related to coloring parameters, are generalized to signed graphs. For instance,
Wang et al.~study the Alon-Tarsi number of signed graphs in \cite{Wang_etal_2019} and Lajou \cite{Lajou_2019} their achromatic number.

\section{Signed graph coloring via signed homomorphisms} \label{NRS_Hom}

Graph homomorphisms provide a unified language and useful tool for the study of graph coloring. 
Clearly, there are homomorphisms into appropriate target multigraphs for aforementioned coloring concepts for signed graphs and they are easy to define. 
For example, if $(G,\sigma)$ admits a $2k$-coloring, then there is a multigraph 
homomorphism from $(G,\sigma)$ to  $\pm K_{2k}^*$, where $\pm K_{2k}^*$ is 
obtained from $\pm K_{2k}$ by removing a perfect matching of negative
edges and adding a negative loop to each vertex. 

However, all approaches we have presented so far do not take
graph homomorphisms as starting point. This section changes the viewpoint and displays 
approaches to coloring signed graphs which have graph homomorphisms as starting point.
In the following, we consider graphs to be simple and loopless except when explicitly stated otherwise. 

In \cite{CircuitCover} B. Guenin introduced the notion of homomorphisms on signed graphs. This concept can be used to define a chromatic number for signed graphs which is different from those we have discussed so far. We will call this parameter the $h$-chromatic number of a signed graph. 

Homomorphism theory on signed graphs, in conformity with Zaslavsky's earlier statement, can be discussed in terms of switching classes of signed graphs. We will follow the definitions stated in \cite{Homomorphisms} and define homomorphisms on signed graphs as homomorphisms on switching classes of signed graphs. For the sake of simplicity, we will call both $(G,\sigma)$ and its corresponding switching class $[G,\sigma]$ signed graphs. The difference can always be spotted by looking at the brackets.

Given two signed graphs $[G,\sigma_1]$ and $[H,\sigma_2]$, we say that there is a homomorphism of $[G,\sigma_1]$ to $[H,\sigma_2]$ if there is a representative $(G,\sigma'_1)$ of $[G,\sigma_1]$ and a representative $(H,\sigma'_2)$ of $[H,\sigma_2]$ together with a vertex-mapping $\phi: V(G) \longrightarrow V(H)$ such that if $xy$ $\in E(G)$, then $\phi(x)\phi(y) \in E(H)$ and $\phi(x)\phi(y)$ has the same sign as $xy$. In other words, $\phi$ preserves signed adjacency. We will state the existence of a homomorphism of $[G,\sigma_1]$ to $[H,\sigma_2]$ as $ [G,\sigma_1] \longrightarrow [H,\sigma_2]$.

Let $\phi$ be a homomorphism of $[G,\sigma_1]$ to $[H,\sigma_2]$ using the representatives $(G,\sigma'_1)$ of $[G,\sigma_1]$ and $(H,\sigma'_2)$ of $[H,\sigma_2]$ and let $X \subseteq V(H)$ be the switching set that forms $(H,\sigma'_2)$ from $(H,\sigma_2)$. Let $(G,\sigma''_1)$ be the signed graph we obtain if we switch $(G,\sigma'_1)$ at $\phi^{-1}(X) \subseteq V(G)$. Then, $\phi$ is also a homomorphism of $[G,\sigma_1]$ to $[H,\sigma_2]$ using the representatives $(G,\sigma''_1)$ and $(H,\sigma_2)$. Therefore the exact representative of the image graph $[H,\sigma_2]$ is not important for the existence of a homomorphism. Note, however, that this does not hold for the domain graph since, for example, a signed forest admits a homomorphism to $[K_2,\texttt{\bf 1}]$ but the representative has to be either all-negative or all-positive.

An automorphism of $[G,\sigma]$ is a homomorphism of the signed graph to itself that is bijective on the vertex set $V(G)$ such that the induced edge-mapping is surjective. If, for each pair $x,y$ of vertices of $[G,\sigma]$, there exists an automorphism $\rho$ of $[G,\sigma]$ such that $\rho(x)=y$, $[G,\sigma]$ is called vertex-transitive. Similarly, if for each pair  $e_1=xy$ and $e_2= uv$ of edges of $[G,\sigma]$, there exists an automorphism $\rho$ of $[G,\sigma]$ such that $\{\rho(x),\rho(y)\}=\{u,v\}$, we call $[G,\sigma]$ edge-transitive. An unbalanced circuit is an example of a signed graph that is both vertex-transitive and edge-transitive since it has representatives with only one negative edge which can be moved around by switching one of its end vertices. Thus, there are automorphisms that induce a ``rotation'' on the circuit. We say that a signed graph $[G,\sigma]$ is isomorphic to $[H,\sigma']$ if there is a homomorphism of $[G,\sigma]$ to $[H,\sigma']$ that is bijective on the vertex set such that the induced edge-mapping is bijective as well.

A core of a signed graph $[G,\sigma]$ is a minimal subgraph of $[G,\sigma]$ to which $[G,\sigma]$ admits a homomorphism. A signed core is a signed graph that admits no homomorphism to a proper subgraph of itself, equivalently if every homomorphism of $[G,\sigma]$ to $[G,\sigma]$ is an automorphism, then $[G,\sigma]$ is a core. The following lemma shows that the core of a signed graph is well-defined.

\begin{lemma} [{\cite{Homomorphisms}}]
Let $[G,\sigma]$ be a signed graph. The core of $[G,\sigma]$ is unique up to isomorphism of signed graphs.
\end{lemma}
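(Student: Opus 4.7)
The plan is to mimic the standard argument for uniqueness of cores in unsigned graph theory, adapted to the resigning-class formalism introduced in the section. Suppose $[C_1,\sigma_1]$ and $[C_2,\sigma_2]$ are two cores of $[G,\sigma]$, i.e.\ minimal subgraphs of $[G,\sigma]$ each admitting a homomorphism from $[G,\sigma]$. Since each $[C_j,\sigma_j]$ is a subgraph of $[G,\sigma]$, the inclusion is a homomorphism $\iota_j: [C_j,\sigma_j] \rightarrow [G,\sigma]$, and by the core assumption there is a homomorphism $r_j: [G,\sigma] \rightarrow [C_j,\sigma_j]$. As noted in the excerpt, the exact representative of the target graph is irrelevant for the existence of a homomorphism; using this, I would first verify that signed homomorphisms compose, so that $f := r_2 \circ \iota_1 : [C_1,\sigma_1] \rightarrow [C_2,\sigma_2]$ and $g := r_1 \circ \iota_2 : [C_2,\sigma_2] \rightarrow [C_1,\sigma_1]$ are well defined.

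Next I would consider the endomorphism $h := g \circ f : [C_1,\sigma_1] \rightarrow [C_1,\sigma_1]$ and the image subgraph $h([C_1,\sigma_1]) \subseteq [C_1,\sigma_1]$. Composing with $r_1$, we obtain a homomorphism $h \circ r_1 : [G,\sigma] \rightarrow h([C_1,\sigma_1])$, exhibiting $h([C_1,\sigma_1])$ as a subgraph of $[C_1,\sigma_1]$ to which $[G,\sigma]$ admits a homomorphism. Minimality of $[C_1,\sigma_1]$ as a core forces $h([C_1,\sigma_1]) = [C_1,\sigma_1]$, so $h$ is surjective on vertices and, since signed homomorphisms preserve the signed edge relation and $C_1$ is finite, also surjective (hence bijective) on edges. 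A surjective endomap of a finite set is a bijection, so $h$ is a vertex bijection preserving signed edges in both directions; by the equivalent formulation of core given in the excerpt, $h$ is an automorphism of $[C_1,\sigma_1]$. Running the same argument with the roles of $C_1$ and $C_2$ swapped shows that $f \circ g$ is an automorphism of $[C_2,\sigma_2]$.

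From these two facts the isomorphism $[C_1,\sigma_1] \cong [C_2,\sigma_2]$ follows: $h = g \circ f$ bijective forces $f$ to be injective on vertices and on edges, while $f \circ g$ bijective forces $f$ to be surjective on vertices and on edges. Hence $f$ is a bijection that preserves the signed adjacency in both directions, which is exactly the definition of an isomorphism of signed graphs given earlier. The main obstacle I anticipate is bookkeeping rather than mathematical: one must carefully choose representatives $(C_j,\sigma_j')$ so that the compositions of vertex maps actually preserve signs on the nose, rather than only up to resigning. This is handled by the remark in the excerpt that, given any homomorphism into a target graph and any resigning of the target, one can resign the preimage set in the source accordingly; applying this remark once at each composition keeps every homomorphism sign-preserving in the chosen representatives.
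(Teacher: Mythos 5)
Your proof is correct: the double-composition argument ($h = g\circ f$ and $f\circ g$, with minimality forcing the image subgraph $h([C_1,\sigma_1])$ to be all of $[C_1,\sigma_1]$, then finiteness giving bijectivity of $f$ on vertices and edges) is the standard core-uniqueness argument, and your use of the resigning remark to keep each composition sign-preserving in fixed representatives is exactly the right bookkeeping. The survey itself states this lemma without proof, citing \cite{Homomorphisms}, and your argument follows the same route as that source, so there is nothing to add beyond noting that the section's standing assumption that graphs are simple is what makes vertex-injectivity imply edge-injectivity in your final step.
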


Since, in the context of homomorphisms, the signature of the image graph is of no concern, the binary relation of the existence of a homomorphism on signed graphs is transitiv.

\begin{lemma}
Let $[G,\sigma_1]$ and $[H,\sigma_2]$ be signed graphs. Then the relation $ [G,\sigma_1] \longrightarrow [H,\sigma_2]$ is transitiv.
\end{lemma}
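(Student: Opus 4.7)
The plan is to show that composition of signed-graph homomorphisms is well-defined: given $[G_1,\sigma_1] \to [G_2,\sigma_2]$ and $[G_2,\sigma_2] \to [G_3,\sigma_3]$, the set-theoretic composition of the underlying vertex maps yields an arrow $[G_1,\sigma_1] \to [G_3,\sigma_3]$, and chaining three or more homomorphisms does not depend on how the composition is parenthesised. I would begin by unpacking the definition and introducing the data: a map $\phi_1 : V(G_1) \to V(G_2)$ that preserves signed adjacency between some representatives $(G_1,\sigma_1')$ and $(G_2,\sigma_2')$, and a map $\phi_2 : V(G_2) \to V(G_3)$ that preserves signed adjacency between some representatives $(G_2,\sigma_2'')$ and $(G_3,\sigma_3')$.

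The substantive issue, and the step I expect to be the main obstacle, is that the two representatives $(G_2,\sigma_2')$ and $(G_2,\sigma_2'')$ of the common middle class $[G_2,\sigma_2]$ need not coincide; they are merely equivalent under resigning. The naive composition $\phi_2 \circ \phi_1$ therefore mixes two different signatures on $G_2$, so one cannot immediately check that signs are preserved edge by edge. This is exactly the difficulty that Zaslavsky's resigning invariance is designed to resolve, and indeed the paragraph directly preceding the lemma already contains the key device. By Theorem \ref{equ_classes}, there exists a set $Y \subseteq V(G_2)$ such that resigning $(G_2,\sigma_2')$ at $Y$ yields $(G_2,\sigma_2'')$. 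Applying that same preceding remark with $X = Y$, I would resign $(G_1,\sigma_1')$ at $\phi_1^{-1}(Y)$ to obtain a new representative $(G_1,\tilde{\sigma}_1)$ of $[G_1,\sigma_1]$ with respect to which $\phi_1$ is still a signed-adjacency-preserving map, but now into the representative $(G_2,\sigma_2'')$.

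With this alignment in hand, $\phi_1$ and $\phi_2$ are homomorphisms into and out of one and the same representative of $[G_2,\sigma_2]$, so the usual pointwise argument applies: for any edge $xy \in E(G_1)$ the image $\phi_1(x)\phi_1(y)$ is an edge of $G_2$ with matching sign, and applying $\phi_2$ to this edge preserves sign once more. Hence $\phi_2 \circ \phi_1$ is a signed-graph homomorphism from $(G_1,\tilde{\sigma}_1)$ to $(G_3,\sigma_3')$, witnessing $[G_1,\sigma_1] \to [G_3,\sigma_3]$. Associativity for a chain of three or more arrows then follows from the associativity of ordinary function composition on the underlying vertex sets, after iterating the same resigning alignment at each intermediate class. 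The only nontrivial part of the whole argument is the representation-matching trick at the middle signed graph; everything else is formal.
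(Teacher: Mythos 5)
Your proposal is correct and follows essentially the same route as the paper: the survey justifies the lemma precisely by the remark preceding it, namely that the representative of the image graph is irrelevant because one can resign the domain at the preimage $\phi^{-1}(X)$ of the resigning set, and you use exactly this device to align the two representatives of the middle signed graph before composing the vertex maps. Your added observation that an edge $xy$ lies in $\partial(\phi_1^{-1}(Y))$ if and only if $\phi_1(x)\phi_1(y)$ lies in $\partial(Y)$ correctly fills in the edge-by-edge sign check that the paper leaves implicit.
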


Hence, the relation $ [G,\sigma_1] \longrightarrow [H,\sigma_2]$ is a quasi-order on the class of all signed graphs which is a poset on the class of all signed cores. 
Naserasr, Rollová and Sopena \cite{Homomorphisms} call this order the homomorphism order of signed graphs 
and say that $[H,\sigma_2]$ bounds $ [G,\sigma_1]$, or that $ [G,\sigma_1]$ is smaller than $[H,\sigma_2]$ 
instead of writing $ [G,\sigma_1] \longrightarrow [H,\sigma_2]$. Furthermore, they extend the notion to classes 
of graphs, so if $C$ is a class of signed graphs, they
 say that $[H,\sigma_2]$ bounds $C$ if $[H,\sigma_2]$ bounds every member of $C$. 

Now, as in the case of homomorphisms of graphs, the smallest order of a signed graph which bounds $[G,\sigma]$ defines a chromatic number of a signed graph. We call it the $h$-chromatic number
and denote it by $\chi_{Hom}([G,\sigma])$. 
Analogously, the notion of signed graph coloring can be defined in the following manner. A proper coloring of a signed graph 
$[G,\sigma]$ is an assignment of colors to the vertices of $G$ such that adjacent vertices do not receive the same color and there is a representative $(G,\sigma')$ of $[G,\sigma]$ such that any two edges whose end vertices receive the same two colors have
the same sign in $(G,\sigma')$. The $h$-chromatic number $\chi_{Hom}([G,\sigma])$ can then be regarded as the minimum number of colors needed for a proper coloring of $[G,\sigma]$. 

Note, that not all the different representatives $(G,\sigma')$ of $[G,\sigma]$ necessary admit a homomorphism to the bounding graph of $[G,\sigma]$ (since we know from above that the domain graph is not free in its representatives). Therefore, the $h$-chromatic number $\chi_{Hom}([G,\sigma])$ may also be expressed as a minimum over all representatives of $[G,\sigma]$: 
Let $Ord(G,\sigma)$ be the smallest order of a signed graph to which $(G,\sigma)$ 
admits a homomorphism. Then $$\chi_{Hom}([G,\sigma])=min\left\{Ord(G,\sigma):  (G,\sigma) \text{ is a representative of  }[G,\sigma] \right\}.$$
Note, that for an unbalanced circuit of length 4 we have $\chi_\pm((C_4,\sigma)) = 3$, $\chi((C_4,\sigma)) = 2$ and 
$\chi_{Hom}([C_4,\sigma]) = 4$. 

The question of the existence of a homomorphism between two signed graphs is essential for this definition of signed graph coloring. In this concern, the $h$-chromatic number itself provides a first test for the possibility of existence of a homomorphism of $[G,\sigma_1]$ to $[H,\sigma_2]$.

\begin{lemma} [{\cite{Homomorphisms}}]
Let $[G,\sigma_1]$ and $[H,\sigma_2]$ be signed graphs. If $[G,\sigma_1] \longrightarrow [H,\sigma_2]$, then $\chi_{Hom}([G,\sigma_1]) \leq \chi_{Hom}([H,\sigma_2])$.
\end{lemma}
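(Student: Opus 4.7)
The plan is to reduce this to a direct application of transitivity of the homomorphism relation, which was established in the preceding lemma (stated as ``associativity'' of $\to$ in the excerpt). First I would unpack the definition: $\chi_{Hom}([H,\sigma_2])$ is, by definition, the smallest order of a signed graph that bounds $[H,\sigma_2]$. So I choose a witness, namely a signed graph $[K,\sigma_3]$ with $[H,\sigma_2] \longrightarrow [K,\sigma_3]$ and $|V(K)| = \chi_{Hom}([H,\sigma_2])$ (such a $[K,\sigma_3]$ exists by definition of the minimum).

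Next, given the hypothesis $[G,\sigma_1] \longrightarrow [H,\sigma_2]$, I would invoke the associativity lemma to compose this homomorphism with the one to $[K,\sigma_3]$, obtaining $[G,\sigma_1] \longrightarrow [K,\sigma_3]$. Thus $[K,\sigma_3]$ also bounds $[G,\sigma_1]$, so by minimality in the definition of $\chi_{Hom}([G,\sigma_1])$,
\[
\chi_{Hom}([G,\sigma_1]) \,\leq\, |V(K)| \,=\, \chi_{Hom}([H,\sigma_2]),
\]
which is the desired inequality.

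There is essentially no obstacle here: once the transitivity of $\longrightarrow$ on resigning classes is granted, the lemma is the monotonicity statement for the induced quasi-order, and the argument amounts to composing two homomorphisms. The only subtle point worth a sentence in the written proof is to emphasize that the composition is well-defined on resigning classes, i.e., that the choice of representatives of $[H,\sigma_2]$ used as codomain on one side and as domain on the other can be reconciled; but this is precisely the content of the preceding lemma, so I would just cite it rather than reprove it.
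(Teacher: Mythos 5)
Your proof is correct and takes the same route as the paper, which disposes of the lemma in one line by remarking that it ``follows from the associativity of the relation $[G,\sigma_1] \longrightarrow [H,\sigma_2]$'' established just before. Your write-up merely makes explicit the witness $[K,\sigma_3]$ of minimum order bounding $[H,\sigma_2]$ and the composition step, which is exactly the intended argument.
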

This lemma follows from the transitivity of the relation of $ [G,\sigma_1] \longrightarrow [H,\sigma_2]$. There is a set of lemmas in \cite{Homomorphisms} that provide such  tests, they are called ``no homomorphism lemmas''.

In general, the problem ``Does a signed graph $[G,\sigma']$ admit a homomorphism to $[H,\sigma]$?'' is not easy to solve. For further results, we refer the interested reader to \cite{ComplexHom, ComplexHom_2, ComplexCycle}.

\subsection{Minor construction}

The construction of minors of signed graphs, as introduced in the context of signed graph homomorphism in \cite{Homomorphisms}, differs from the one  Zaslavsky proposed in \cite{SignedGraphs}. A minor of a signed graph $[G,\sigma]$ is a signed graph $[H,\sigma']$ obtained from $[G,\sigma]$ by a sequence of four operations: Deleting vertices, deleting edges, contracting positive edges and switching. While a negative edge can not directly be contracted, one can switch one of its end vertices and then contract it. Since in this paragraph we generally consider signed graphs to be simple, every contraction in this process that results in a multiple edge should be followed by the removal of all of these edges but one. Note that there is no rule concerning the choice of the remaining edge, so it can be chosen freely,
and the resulting graph depends on the choice.  
Using this rule also prevents the creation of loops in a minor.

The set of unbalanced circuits determines a signed graph, so it is worthwhile to consider that the contraction of positive edges does not change the sign of a circuit, hence we get the following lemma.

\begin{lemma} [\cite{Homomorphisms}]
Let $[H,\sigma']$ be a (simple) minor of a signed graph $[G,\sigma]$ that is obtained only by the operation of contracting positive edges, then the image of an unbalanced circuit of $[G,\sigma]$ is an unbalanced circuit in $[H,\sigma']$.
\end{lemma}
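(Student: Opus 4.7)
The plan is to induct on the number of positive-edge contractions used to produce $[H,\sigma']$ from $[G,\sigma]$, reducing the problem to the case of a single contraction of one positive edge $f = uv$. The guiding observation is that contracting a positive edge only identifies its two endpoints and removes the edge itself: every other edge retains its two half-edges and hence its sign, and the deleted edge contributes the trivial factor $\sigma(f) = +1$ to any sign product through it. Consequently, the sign of every closed walk is preserved in passing from $(G,\sigma)$ to its minor.

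For the inductive step, let $C$ be an unbalanced circuit in the current $(G,\sigma)$; I would split into three cases according to how $f$ sits relative to $C$. (i) If at most one endpoint of $f$ lies on $V(C)$, then $C$ survives the contraction as a circuit on the same edge-set, with sign product still $-1$. (ii) If $f \in E(C)$, then the image of $C$ is a circuit of length $|E(C)| - 1$ whose sign product equals $\sigma(C)/\sigma(f) = -1$, hence unbalanced. (iii) If both endpoints $u, v$ of $f$ lie on $V(C)$ but $f \notin E(C)$, then $C$ splits into two internally disjoint $uv$-paths $P_1, P_2$, and after contraction these become two circuits $C_1, C_2$ meeting at the identified vertex, satisfying $\sigma(C_1)\sigma(C_2) = \sigma(P_1)\sigma(P_2) = \sigma(C) = -1$; hence exactly one of $C_1, C_2$ is unbalanced, and that one is the unbalanced circuit extracted from the image of $C$.

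The main obstacle I foresee lies in the simple-graph convention used throughout this section: a contraction can create parallel edges, all but one of which must be deleted, and in extreme sub-cases (for instance, case (ii) applied to a triangle $C$ of sign product $-1$) the two parallel edges produced carry opposite signs, so that a careless deletion would collapse the image to a single edge rather than a circuit. To handle this, I would exploit the freedom to choose which parallel edge to keep and argue that in each sub-case one can preserve an unbalanced circuit on the image of $C$; alternatively, one can first run the whole argument in the signed multigraph and then appeal to the fact that an opposite-signed digon is itself an unbalanced $C_2$. Iterating the single-contraction analysis over the full sequence of contractions then closes the induction and yields the lemma.
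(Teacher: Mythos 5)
Your guiding observation---that a contracted edge is positive and hence contributes only a trivial factor to any sign product---is in fact the paper's entire justification: the survey derives the lemma from the single remark that ``the contraction of positive edges does not change the balance of a circuit'' and supplies no case analysis, so your proposal is a fleshed-out version of the same idea rather than a different route. The added rigor is useful, and your case (iii) exposes a genuine looseness in the statement: when a positive chord of $C$ is contracted, the image of $C$ is two circuits meeting at the identified vertex, so the conclusion can only be read as saying the image \emph{contains} an unbalanced circuit, which is what your extraction argument ($\sigma(C_1)\sigma(C_2)=\sigma(C)=-1$, so exactly one lobe is unbalanced) delivers, and is all that the subsequent corollary needs.

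One concrete flaw: your first proposed fix for the parallel-edge problem cannot work. In case (ii) applied to an unbalanced triangle, the contraction produces a digon with one positive and one negative edge, and the section's convention deletes \emph{all} parallel edges but one---so whichever edge you elect to keep, the image collapses to a single edge and no circuit survives; the freedom of choice buys you nothing. Your fallback is the correct move: run the whole induction at the level of signed multigraphs, where the opposite-signed digon is a legitimate unbalanced circuit $C_2$ under the paper's own definitions (Section 1 explicitly admits $C_1$ and $C_2$ as circuits), and perform any simplification only at the end. In that multigraph setting your case (iii) acquires one further subcase worth recording: a lobe $P_i$ of length $1$ (an edge parallel to the contracted edge $f$, possible after earlier contractions) becomes a loop at the identified vertex, which is an unbalanced $C_1$ exactly when it is negative, and the sign-product bookkeeping is unaffected since $\sigma(f)=+1$. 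With these two repairs the induction closes and the argument is sound.
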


Furthermore, the above defined minor construction does not create circuits and thus, 
every minor of a balanced graph is balanced.

\subsection{Signed Cliques}

Another important extension to signed graphs is the notion of cliques. In \cite{Homomorphisms} the following generalization for signed graphs is introduced. Let $[G,\sigma]$ be a signed graph, $[G,\sigma]$ is a signed clique, or short S-clique, if its $h$-chromatic number equals its order. One may equivalently  call a signed graph an S-clique if its homomorphic images are all isomorphic to itself. Recall that for graphs, a clique is any complete graph, so an all-positive signed graph is an S-clique if and only if its underlying graph is a clique. Thus, this definition is a natural extension. Note that every signed complete graph $[K_n,\sigma]$ is an S-clique, but the converse is not true. A useful tool for determining whether a signed graph is an S-clique or not is the following lemma.

\begin{lemma} [{\cite{Homomorphisms}}]  \label{unbal4cycle}
A signed graph $[G,\sigma]$ is an S-clique if and only if for each pair $u$ and $v$ of vertices of $G$ either $uv$ is an edge in $G$ or $u$ and $u$ are vertices of an unbalanced circuit of length 4.
\end{lemma}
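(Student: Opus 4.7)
The plan is to prove the equivalence by analyzing precisely when a homomorphism from $[G,\sigma]$ to another signed graph can identify two distinct vertices $u$ and $v$, and translating this condition into the existence (or non-existence) of an unbalanced 4-circuit through $u$ and $v$. The key technical observation will be that if $\phi(u)=\phi(v)$ in some representative $(G,\sigma')$ of $[G,\sigma]$, then for every common neighbor $w$ of $u$ and $v$, the two edges $uw$ and $vw$ are mapped to the same edge of the image and hence must satisfy $\sigma'(uw)=\sigma'(vw)$.

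For the reverse direction ($\Leftarrow$), I would argue by contrapositive. Suppose $[G,\sigma]$ is not an S-clique, so there is a homomorphism $\phi:[G,\sigma]\to[H,\sigma'']$ with $\phi(u)=\phi(v)$ for some $u\neq v$. Then $uv$ cannot be an edge (otherwise the image would be a loop, excluded in this setting), so the hypothesis would force $u,v$ to lie on an unbalanced $C_4$, say $u\text{-}w_1\text{-}v\text{-}w_2\text{-}u$. By the key observation above, in the representative used for $\phi$ we would have $\sigma'(uw_i)=\sigma'(vw_i)$ for $i=1,2$, making the sign product around this circuit equal to $\sigma'(uw_1)^2\sigma'(uw_2)^2=1$, i.e.\ balanced. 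Since balancedness of a circuit is a resigning invariant, this contradicts the assumption that the circuit is unbalanced.

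For the forward direction ($\Rightarrow$), I would suppose $[G,\sigma]$ is an S-clique, take any pair $u,v$ with $uv\notin E(G)$, and show they lie on an unbalanced $C_4$. The idea is to exhibit an explicit homomorphism that identifies $u$ and $v$ whenever no such $C_4$ exists, contradicting the S-clique assumption. Here I would use the switching calculus: for any resigning $\tau:V(G)\to\{\pm1\}$, the product $\sigma'(uw)\sigma'(vw)$ equals $\tau(u)\tau(v)\,\sigma(uw)\sigma(vw)$, so the ratio $\sigma(uw_1)\sigma(vw_1)\sigma(uw_2)\sigma(vw_2)$ is a switching invariant equal precisely to the sign of the 4-circuit through $u,v,w_1,w_2$. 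If no unbalanced $C_4$ contains $u$ and $v$, then this invariant is $+1$ for all pairs of common neighbors, which means $\sigma(uw)\sigma(vw)$ is constant over all common neighbors $w$. Choosing $\tau$ so that $\tau(u)\tau(v)$ equals this common value (and $\tau\equiv 1$ elsewhere) produces a representative in which $\sigma'(uw)=\sigma'(vw)$ for every common neighbor $w$; the map that identifies $u$ and $v$ and fixes all other vertices is then a well-defined homomorphism onto a simple signed graph with $n-1$ vertices.

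I expect the main obstacle to be the forward direction, specifically verifying that the identification map is a legitimate homomorphism to a \emph{simple loopless} signed graph: I need the non-adjacency of $u,v$ to rule out loops, the switching argument to collapse the potential multi-edges with matching signs, and a small case analysis when $u$ and $v$ have fewer than two common neighbors (where the unbalanced-$C_4$ condition is vacuous but identification is still available for free). The computation of the switching invariant is routine, but stating cleanly that it coincides with the circuit sign and that matching signs can always be achieved by a single appropriate choice of $\tau$ is the step that needs the most care.
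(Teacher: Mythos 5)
Your proof is correct: the key observation that merging $u$ and $v$ under a homomorphism forces $\sigma'(uw)=\sigma'(vw)$ for every common neighbor $w$ (hence every $C_4$ through $u$ and $v$ is balanced), together with the switching-invariant computation identifying $\sigma(uw_1)\sigma(vw_1)\sigma(uw_2)\sigma(vw_2)$ with the sign of the $4$-circuit, carries both directions, and you correctly handle the loopless-target point and the degenerate case of fewer than two common neighbors. The survey states this lemma without proof, citing Naserasr, Rollov\'a and Sopena, and your argument is essentially the one given in that source, so there is nothing further to flag.
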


The following corollary follows from this lemma immediately.

\begin{corollary} [{\cite{Homomorphisms}}]
An S-clique cannot have a cut-vertex.
\end{corollary}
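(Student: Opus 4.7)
The plan is to derive a direct contradiction from Lemma \ref{unbal4cycle}. Suppose, for contradiction, that $[G,\sigma]$ is an S-clique and that $w \in V(G)$ is a cut-vertex. Then $G - w$ has at least two connected components; pick a vertex $u$ in one component and a vertex $v$ in another.

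First I would observe that $uv \notin E(G)$: the single edge $uv$ would be a $u$--$v$ path in $G$ avoiding $w$, contradicting that $w$ separates $u$ from $v$. Next, any circuit $C$ in $G$ containing both $u$ and $v$ can be decomposed, by removing $u$ and $v$, into two internally disjoint $u$--$v$ paths in $G$. Since $w$ is a cut-vertex separating $u$ from $v$, each of these paths must contain $w$; but the two paths are internally disjoint, so $w$ can appear on at most one of them. This contradiction shows that no circuit of $G$ contains both $u$ and $v$, and in particular $u$ and $v$ do not lie on a common unbalanced circuit of length $4$.

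By Lemma \ref{unbal4cycle}, the pair $u,v$ witnesses that $[G,\sigma]$ is not an S-clique, contradicting our assumption. Hence an S-clique cannot have a cut-vertex. The only subtlety is the two-internally-disjoint-paths argument (essentially the easy half of Menger's theorem for the case $k=2$), and it is entirely routine, so I do not expect any real obstacle here.
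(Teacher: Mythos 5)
Your proof is correct and matches the paper's intent exactly: the paper gives no explicit argument, stating only that the corollary ``follows from this lemma immediately,'' and your argument---vertices $u,v$ separated by the cut-vertex $w$ are non-adjacent and lie on no common circuit (since the two arcs of such a circuit would be internally disjoint $u$--$v$ paths, each forced through $w$), so the pair violates Lemma~\ref{unbal4cycle}---is precisely the immediate derivation intended. No gaps; the two-arc decomposition step is sound, and $w\notin\{u,v\}$ guarantees $w$ would have to be internal to both arcs.
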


An interesting example of an S-clique that is not a signed complete graph is the signed complete bipartite graph $[K_{n,n},\sigma_M]$ and $N_{\sigma_M}$ is a perfect
matching. It is easy to see that $[K_{n,n},\sigma_M]$ is an S-clique for $n\geq 3$ if one checks for the existence of an unbalanced circuit of length 4 on every pair of vertices of the same partition.
Note that, since  the core of a signed graph is unique up to isomorphism, every S-clique is a core.

Following the definition of signed cliques, there are two natural definitions of the signed clique number of a signed graph. The absolute S-clique number of $[G,\sigma]$, denoted by $\omega_{sa}[G,\sigma]$, equals the order of the largest subgraph $[H,\sigma']$ of $[G,\sigma]$ that is an S-clique itself. The relative S-clique number of $[G,\sigma]$, denoted by $\omega_{sr}[G,\sigma]$, is the order of the largest subgraph $[H,\sigma']$ of $[G,\sigma]$ such that in every homomorphic image $\phi[G,\sigma]$ the order of the induced subgraph $\phi[H,\sigma']$ equals that of $[H,\sigma']$. Again it is easy to see that these definitions equal their unsigned counterpart in the case of an all-positive signed graph. It is also proved in \cite{Homomorphisms} that these definitions are independent of switching, so we can continue to consider switching classes instead of actual signed graphs. However, the difference between the absolute S-clique number and the relative S-clique number of a signed graph can be arbitrarily large. These numbers also follow the homomorphism order of signed graphs, which gives rise to another ``no homomorphism lemma''.

\begin{lemma} [{\cite{Homomorphisms}}]
Let $[G,\sigma_1]$ and  $[H,\sigma_2]$ be two signed graphs. If $[G,\sigma_1] \longrightarrow [H,\sigma_2]$, then $\omega_{sa}[G,\sigma_1] \leq \omega_{sa}[H,\sigma_2]$ and $\omega_{sr}[G,\sigma_1] \leq \omega_{sr}[H,\sigma_2]$.
\end{lemma}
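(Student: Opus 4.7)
The plan is to handle the two inequalities separately, using the characterization of S-cliques via unbalanced 4-cycles (Lemma~\ref{unbal4cycle}) for the absolute part, and associativity of homomorphisms for the relative part. Let $\phi$ denote a homomorphism $[G,\sigma_1] \to [H,\sigma_2]$.

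For the absolute S-clique number, I would start by picking a subgraph $[K,\sigma_K] \subseteq [G,\sigma_1]$ that realises $\omega_{sa}[G,\sigma_1]$, so $[K,\sigma_K]$ is an S-clique of order $k := \omega_{sa}[G,\sigma_1]$. Since $\chi_{Hom}([K,\sigma_K]) = k$, the restriction $\phi|_{V(K)}$ must be injective, otherwise we would obtain a homomorphism from $[K,\sigma_K]$ into a signed graph of order less than $k$. Let $[L,\sigma_L]$ be the subgraph of $[H,\sigma_2]$ induced by $\phi(V(K))$; it has order $k$. The key step is to verify that $[L,\sigma_L]$ is itself an S-clique via Lemma~\ref{unbal4cycle}. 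Given two non-adjacent vertices $u',v' \in V(L)$, their preimages $u,v \in V(K)$ are necessarily non-adjacent in $K$ (else their images would be adjacent, because homomorphisms send edges to edges of the same sign), so by Lemma~\ref{unbal4cycle} applied to $K$ they sit on an unbalanced 4-cycle $uxvy$ in $K$. Injectivity of $\phi|_{V(K)}$ ensures the images $\phi(u),\phi(x),\phi(v),\phi(y)$ are four distinct vertices of $L$, sign preservation ensures the image cycle is again unbalanced, and all four vertices lie in $V(L)$ so the 4-cycle is witnessed inside the induced subgraph. Thus $[L,\sigma_L]$ is an S-clique of order $k$ inside $[H,\sigma_2]$, giving $\omega_{sa}[H,\sigma_2] \geq k$.

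For the relative S-clique number, I would choose a subgraph $[K,\sigma_K] \subseteq [G,\sigma_1]$ realising $\omega_{sr}[G,\sigma_1]$ and set $k := |V(K)|$. Since $\phi$ is a homomorphism out of $[G,\sigma_1]$, the defining property of a relative S-clique forces $\phi|_{V(K)}$ to be injective, so the induced subgraph $[L,\sigma_L] \subseteq [H,\sigma_2]$ on $\phi(V(K))$ has order $k$. The main step is to argue $[L,\sigma_L]$ is a relative S-clique in $[H,\sigma_2]$, and here associativity of the homomorphism relation does the work: for any homomorphism $\psi : [H,\sigma_2] \to [M,\sigma_M]$, the composition $\psi \circ \phi$ is a homomorphism out of $[G,\sigma_1]$, so by the relative S-clique property of $K$ the restriction $(\psi \circ \phi)|_{V(K)}$ is injective, which means $\psi|_{V(L)}$ is injective as well. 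Hence $[L,\sigma_L]$ is a relative S-clique of order $k$ in $[H,\sigma_2]$, and $\omega_{sr}[H,\sigma_2] \geq k$.

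The only non-trivial step I foresee is checking, in the absolute case, that the induced subgraph on $\phi(V(K))$ does not acquire any additional edges that could spoil the unbalanced 4-cycle witnessing non-adjacency; but because the relevant property in Lemma~\ref{unbal4cycle} is a purely existential statement about 4-cycles inside the graph, the extra induced edges are harmless. The rest reduces to applying injectivity and sign preservation, which are immediate from the definition of a signed graph homomorphism given earlier in this section.
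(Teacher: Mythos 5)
Your proof is correct, and it matches the argument intended here: the paper (a survey) states this lemma without proof, citing \cite{Homomorphisms}, but its surrounding remarks point exactly to your two ingredients --- composing homomorphisms (the associativity lemma) for $\omega_{sr}$, and Lemma \ref{unbal4cycle} as ``a useful tool regarding S-cliques'' for $\omega_{sa}$, where injectivity on the S-clique plus sign preservation carries the unbalanced $4$-circuits into the image. The one point worth stating explicitly in a write-up is the innocuous fact you use implicitly, that restricting a homomorphism of $[G,\sigma_1]$ to a subgraph $[K,\sigma_K]$ yields a homomorphism of the resigning class $[K,\sigma_K]$ (resignings of $G$ restrict to resignings of $K$), which justifies deducing injectivity from $\chi_{Hom}([K,\sigma_K])=k$.
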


Also, from the definition arises the following relationship between these two S-clique numbers and the $h$-chromatic number.

\begin{theorem} [{\cite{Homomorphisms}}]
Let $[G,\sigma]$ be a signed graph. Then $\omega_{sa}[G,\sigma] \leq \omega_{sr}[G,\sigma] \leq \chi_{Hom}([G,\sigma])$.
\end{theorem}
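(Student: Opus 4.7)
The plan is to prove the two inequalities separately, since each follows directly from the defining properties once we pick the right witnessing subgraph and the right homomorphism. The key observation driving both parts is that $\chi_{Hom}([G,\sigma])$ is characterized as the smallest order of a signed graph to which $[G,\sigma]$ admits a homomorphism, and that the S-clique condition on a subgraph says precisely that it admits no homomorphism to anything smaller than itself.

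For the first inequality $\omega_{sa}[G,\sigma] \leq \omega_{sr}[G,\sigma]$, I would take an absolute S-clique subgraph $[H,\sigma']$ of $[G,\sigma]$ realizing $|V(H)| = \omega_{sa}[G,\sigma]$ and verify that it also satisfies the defining condition of a relative S-clique. Let $\phi$ be any homomorphism from $[G,\sigma]$ to some signed graph $[K,\tau]$. Then $\phi|_H$ is a homomorphism from $[H,\sigma']$ onto the induced subgraph on $\phi(V(H))$, which has order $|\phi(V(H))| \leq |V(H)|$. Because $[H,\sigma']$ is an S-clique, its signed chromatic number equals $|V(H)|$, so it admits no homomorphism to a signed graph of strictly smaller order; hence $|\phi(V(H))| = |V(H)|$. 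This is exactly the relative S-clique property, so $|V(H)| \leq \omega_{sr}[G,\sigma]$.

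For the second inequality $\omega_{sr}[G,\sigma] \leq \chi_{Hom}([G,\sigma])$, I would choose a subgraph $[H,\sigma']$ of $[G,\sigma]$ realizing $\omega_{sr}[G,\sigma]$ together with a single specific homomorphism that forces the bound. By the definition of $\chi_{Hom}$, there exists a homomorphism $\phi$ from $[G,\sigma]$ to some signed graph $[K,\tau]$ with $|V(K)| = \chi_{Hom}([G,\sigma])$. Applying the defining property of the relative S-clique subgraph to this particular $\phi$ yields $|\phi(V(H))| = |V(H)|$, and since $\phi(V(H)) \subseteq V(K)$ we obtain $\omega_{sr}[G,\sigma] = |V(H)| \leq |V(K)| = \chi_{Hom}([G,\sigma])$.

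I do not expect a genuine obstacle here; both arguments are immediate from the definitions. The only point deserving care is that everything takes place at the level of resigning classes, so one should note that the cardinality $|\phi(V(H))|$ depends only on $\phi$ as a map of vertex sets and that the conditions ``$[H,\sigma']$ is an S-clique'' and ``$\chi_{Hom}([G,\sigma])$ is the order of a smallest bounding signed graph'' are themselves invariants of resigning classes, so the two inequalities really are well-posed at that level.
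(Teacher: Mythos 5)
Your proof is correct and is exactly the argument intended here: the survey states this result as arising directly from the definitions (citing \cite{Homomorphisms}), and your two steps---restricting an arbitrary homomorphism to a maximum absolute S-clique to verify the relative S-clique property, then applying the relative S-clique property to a homomorphism onto a smallest bounding graph---are that definitional argument, including the correct observation that restriction of a class-level homomorphism to a subgraph is again a class-level homomorphism. Nothing further is needed.
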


The signed complete bipartite graph $[K_{n,n},\sigma_M]$ is an example for the large differences that can occur between the $h$-chromatic number of a signed graph and the chromatic number of its underlying graph. It is bipartite and therefore, it is 2-colorable, 
but $\chi_{Hom}([K_{n,n},\sigma_M])=2n$. Hence, we have the following statement.

\begin{proposition} [{\cite{Homomorphisms}}]
For every graph $G$ and every signature $\sigma$ of $G$, $\chi_{Hom}([G,\sigma])\geq \chi (G)$. Furthermore, for each positive integer $n$, there is a a signed graph $[H,\sigma_H]$ such that $\chi_{Hom}([H,\sigma_H]) - \chi(H) \geq n$.
\end{proposition}

In \cite{Homomorphisms} it is shown that the problem of determining the relative or 
absolute S-clique number of a signed graph is NP-hard.
However, in \cite{RelCNr} the relative S-clique number is determined for graphs of some families 
of planar and outerplanar signed graphs.

Now that the basic definitions necessary for signed graph coloring in the context of homomorphisms are introduced, we continue by stating some of their elemental properties. We know that there is (up to switching) only one signed graph on any given tree and 
its core is always the $[K_2,\texttt{\bf 1}]$. Therefore we get the following proposition regarding the S-clique numbers on a signed tree.

\begin{proposition} [{\cite{Homomorphisms}}]
Let $[G,\sigma]$ be a signed graph. If $G$ is a tree, 
then $\omega_{sa}[G,\sigma]=\omega_{sr}[G,\sigma]=\chi_{Hom}([G,\sigma])=2$.
\end{proposition}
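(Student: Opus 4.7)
The plan is to use three ingredients in succession: the uniqueness of signatures on a tree, the classical two-coloring of trees, and the chain of inequalities between the three parameters proved earlier.

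First, since $G$ is a tree on $n$ vertices, it has $m = n-1$ edges and $c = 1$ component, so $m - n + c = 0$. By Proposition \ref{number_non_equivalent} there is exactly one equivalence class of signatures on $G$, namely that of $\texttt{\bf 1}$. Hence $[G,\sigma] = [G,\texttt{\bf 1}]$, and we may pick $(G,\texttt{\bf 1})$ as representative throughout.

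Second, I would compute $\chi_{Hom}([G,\texttt{\bf 1}])$. Since $G$ is a tree with at least one edge (the case $|V(G)| = 1$ is degenerate and gives $1$ everywhere; the proposition implicitly assumes a nontrivial tree), $G$ is bipartite with $\chi(G) = 2$, so the bipartition gives a homomorphism $(G,\texttt{\bf 1}) \longrightarrow (K_2,\texttt{\bf 1})$. This yields $\chi_{Hom}([G,\sigma]) \leq 2$. Conversely, by the proposition stating $\chi_{Hom}([G,\sigma]) \geq \chi(G)$, we have $\chi_{Hom}([G,\sigma]) \geq 2$, so $\chi_{Hom}([G,\sigma]) = 2$.

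Third, for the two S-clique numbers I would invoke the chain
\begin{displaymath}
\omega_{sa}[G,\sigma] \leq \omega_{sr}[G,\sigma] \leq \chi_{Hom}([G,\sigma]) = 2,
\end{displaymath}
which was established in the theorem just above. For the matching lower bound, pick any edge $uv$ of $G$. The induced signed subgraph is $[K_2,\texttt{\bf 1}]$, which trivially satisfies the criterion of Lemma \ref{unbal4cycle} (its two vertices are adjacent) and hence is an S-clique of order $2$. Therefore $\omega_{sa}[G,\sigma] \geq 2$, and squeezing forces all three numbers to equal $2$.

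There is no real obstacle here: once one notes that every signature on a tree is equivalent to $\texttt{\bf 1}$, the statement reduces to elementary bipartite graph facts combined with the sandwich inequality between $\omega_{sa}$, $\omega_{sr}$ and $\chi_{Hom}$ already recorded in the excerpt. The only subtlety worth flagging is the trivial case of a one-vertex tree, which must be excluded (or separately treated) because then all three parameters equal $1$, not $2$.
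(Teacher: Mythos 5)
Your proof is correct and takes essentially the same route the paper sketches: the unique signature class on a tree (via Proposition~\ref{number_non_equivalent}), the observation that the signed tree maps homomorphically to $[K_2,\texttt{\bf 1}]$, the lower bound $\chi_{Hom}([G,\sigma]) \geq \chi(G) = 2$, and the sandwich $\omega_{sa}[G,\sigma] \leq \omega_{sr}[G,\sigma] \leq \chi_{Hom}([G,\sigma])$ together with a single edge as an S-clique of order $2$. Your explicit exclusion of the one-vertex tree is a legitimate refinement that the survey leaves implicit.
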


Furthermore, the set of (homomorphically) 2-colorable signed graphs can be completely classified by two properties.

\begin{theorem} [{\cite{Homomorphisms}}]
A signed graph $[G,\sigma]$ is (homomorphically) 2-colorable if and only if $G$ is bipartite and $[G,\sigma]$ is balanced.
\end{theorem}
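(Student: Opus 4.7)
The plan is to prove both directions directly from the definition of a proper coloring given in the paragraph preceding the theorem. The ``if'' direction uses that a balanced signed graph admits an all-positive representative, while the ``only if'' direction additionally exploits the fact that an all-negative signature on a bipartite graph is equivalent to the all-positive one.

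For the ``if'' direction, I assume $G$ is bipartite with parts $A, B$ and $[G,\sigma]$ is balanced. By the corollary characterizing balanced signed graphs, $(G,\sigma)\sim(G,\texttt{\bf 1})$, so I work with the all-positive representative. Color every vertex of $A$ with $1$ and every vertex of $B$ with $2$. Since each edge has one endpoint in $A$ and one in $B$, adjacent vertices receive distinct colors; moreover, every edge is positive in the chosen representative, so the edges between color classes $1$ and $2$ trivially share a common sign. Both conditions of the proper coloring definition are met.

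For the ``only if'' direction, suppose $[G,\sigma]$ admits a proper $2$-coloring witnessed by a representative $(G,\sigma')$ and a map $c : V(G) \to \{1,2\}$. The condition that adjacent vertices carry distinct colors forces every edge to cross between $c^{-1}(1)$ and $c^{-1}(2)$, so $G$ is bipartite. The sign-condition then says that every edge has a common sign in $(G,\sigma')$, leaving two cases. If that sign is positive, then $(G,\sigma')=(G,\texttt{\bf 1})$ and $(G,\sigma)$ is balanced. If it is negative, then $(G,\sigma')=(G,\texttt{\bf -1})$; now resigning at every vertex of $c^{-1}(1)$ flips the sign of every edge (since each edge crosses the bipartition), yielding $(G,\texttt{\bf 1})$, so once again $(G,\sigma)$ is balanced.

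The only genuinely nontrivial step is the negative subcase of the ``only if'' direction: one has to invoke bipartiteness of $G$ to turn the all-negative signature into the all-positive one by resigning a whole side of the bipartition. This collapse of the distinction between balanced and antibalanced on bipartite graphs is exactly what makes ``bipartite and balanced'' sufficient, without separately allowing ``bipartite and antibalanced''.
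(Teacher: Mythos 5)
Your proof is correct and complete: both directions follow exactly the coloring formulation stated before the theorem, and the key twist---that on a bipartite graph the all-negative representative can be resigned (at one side of the bipartition) into the all-positive one---is handled properly. Note that the paper, being a survey, states this result without proof (citing \cite{Homomorphisms}), so there is no in-paper argument to diverge from; yours is the standard direct argument. A minor shortcut: in the all-negative subcase you could skip the explicit resigning and instead invoke Theorem~\ref{character_balanced} with the bipartition $(A,B)$, which shows at once that an all-negative bipartite graph is balanced.
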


For other values than 2 however, it is difficult to compute the $h$-chromatic number. In \cite{Homomorphisms} it is shown that
the problem ``Is $\chi_{Hom}([G,\sigma]) \leq k$?'' is computable in polynomial-time for $k \in \{1,2\}$ and it is NP-complete for $k \geq 3$.

Motivated by Brooks' Theorem, the relation between the $h$-chromatic number and the maximum degree of a graph is studied.

\begin{theorem} [{\cite{MaxDegree}}]
For every signed graph $[G,\sigma]$ with $\Delta(G) \geq 3$ holds: 
 
\[2^{\frac{\Delta(G)}{2}-1 }\leq \chi_{Hom}([G,\sigma]) \leq (\Delta(G) - 1)^2 2^{(\Delta(G)-1)} + 2.\]
\end{theorem}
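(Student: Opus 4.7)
The plan is to adapt the classical Kostochka--Sopena--Zhu technique used for the oriented chromatic number of bounded-degree graphs to the signed setting, where positive/negative edges play a role analogous to forward/backward arcs. The upper bound is proved by constructing a single "universal" signed target $H_\Delta$ of order $(\Delta-1)^2 2^{\Delta-1}+2$ to which every signed graph of maximum degree at most $\Delta$ admits a homomorphism, and the lower bound is proved by a counting/pigeonhole argument showing that there are too many pairwise non-homomorphic signed graphs of maximum degree $\Delta$ to be simultaneously bounded by any signed graph of order less than $2^{\Delta/2-1}$.

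For the upper bound I would construct $[H_\Delta,\sigma_H]$ as follows: take two distinguished vertices $u_0,u_1$ together with a set $W$ of vertices indexed by triples $(i,j,\varepsilon)$ with $i,j\in\{1,\dots,\Delta-1\}$ and $\varepsilon\in\{-1,+1\}^{\Delta-1}$, so $|V(H_\Delta)|=(\Delta-1)^2 2^{\Delta-1}+2$. Edges and signs in $H_\Delta$ are chosen so that the following absorbing property holds: for any set $S\subseteq V(H_\Delta)$ with $|S|\le \Delta-1$ and any assignment of requested signs $\tau\colon S\to\{\pm 1\}$, there exists $w\in V(H_\Delta)$ such that for every $s\in S$ the vertex $w$ is adjacent to $s$ with sign $\tau(s)$. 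Given this property, I would argue by induction on $|V(G)|$: pick a vertex $v\in V(G)$ (using the max-degree hypothesis to delete $v$), extend a homomorphism $\phi\colon G-v\to H_\Delta$ given by induction to $v$ by selecting a vertex of $H_\Delta$ that realises the signed adjacency demands imposed by $\phi$ restricted to the (at most $\Delta$) neighbours of $v$. The indices $(i,j,\varepsilon)$ encode respectively the "signature profile" $\varepsilon$ of the neighbourhood and two auxiliary parameters $i,j$ used to break the pigeonhole obstruction arising when neighbours of $v$ have already been mapped to vertices of $H_\Delta$ with conflicting sign constraints.

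For the lower bound I would produce, for each $\Delta\ge 3$, an explicit family of signed graphs of maximum degree at most $\Delta$ that are pairwise incomparable under homomorphism. A natural candidate is to take a large bipartite graph of maximum degree $\Delta$ (e.g.~a suitable random bipartite regular graph) and consider its $2^{|E|}$ distinct signatures; a standard probabilistic argument then shows that the number of resigning-equivalence classes on $n$ vertices that are pairwise non-homomorphic grows like $2^{cn\Delta/2}$. If such a family were bounded by a fixed signed graph $[H,\sigma]$ of order $N$, there would be at most $N^n$ vertex-mappings from an $n$-vertex signed graph to $[H,\sigma]$, and comparing this with the count of incomparable signed graphs forces $N\ge 2^{\Delta/2-1}$ for $\Delta\ge 3$. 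The "no-homomorphism" lemmas from \cite{Homomorphisms} combined with Lemma~\ref{unbal4cycle} would be used to certify incomparability of the elements of the family.

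The main obstacle is the verification of the absorbing property of $H_\Delta$: one must assign signs to the edges of $H_\Delta$ so that the neighbourhood-realisation condition holds uniformly, and it is precisely at this step that the factor $(\Delta-1)^2 2^{\Delta-1}$ appears, with $2^{\Delta-1}$ accounting for the sign patterns in $\varepsilon$ and $(\Delta-1)^2$ for the two "correction" parameters needed when a neighbourhood is already partially coloured. A careful double counting is required to show that the demanded $w$ always exists, and the additive $+2$ reflects the role of the two distinguished vertices $u_0,u_1$ that handle the degenerate cases (vertices of very low degree and the base of the induction). On the lower bound side, the secondary difficulty is choosing the family of signed graphs so that both the "maximum degree $\le\Delta$" constraint and the pairwise non-homomorphism are provable at the same time; I expect to use a first-moment/union-bound argument rather than an explicit construction.
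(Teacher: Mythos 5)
The survey itself states this theorem without proof, quoting it from \cite{MaxDegree}; the known proof there follows exactly the Kostochka--Sopena--Zhu scheme for oriented chromatic number that you propose to adapt, so your overall strategy is the right one. But as written, your upper bound has a genuine gap. The absorbing property you postulate for $H_\Delta$ only covers requests on sets of at most $\Delta-1$ vertices, while your greedy induction must extend a partial homomorphism to a vertex all of whose up to $\Delta$ neighbours are already coloured: a graph of maximum degree $\Delta$ is only guaranteed to be $\Delta$-degenerate, not $(\Delta-1)$-degenerate, so for $\Delta$-regular graphs the induction as described stalls at the last vertex. The devices you offer to bridge this --- the auxiliary indices $i,j$ ``breaking the pigeonhole obstruction'' and the two special vertices $u_0,u_1$ --- come with no edge rule and no verification, so they are names rather than an argument. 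In the KSZ-type proof the fix is quantitative, not structural: one needs a target in which every signed request on at most $\Delta-1$ vertices is realised by \emph{many} vertices (at least $\Delta$, say), so that when one reaches a vertex with $\Delta$ coloured neighbours one can recolour a neighbour and still extend; the existence of such a target is proved by a union bound over the $\binom{N}{\Delta-1}2^{\Delta-1}$ possible requests applied to a random signing of a complete graph, and it is precisely this computation --- which you explicitly defer --- that produces the order $(\Delta-1)^2 2^{\Delta-1}$, with the $+2$ absorbing the remaining degenerate cases. Until that existence proof and the regular case are supplied, the upper bound is not established.

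Your lower bound is essentially correct but needs two repairs, and neither randomness nor pairwise incomparability is the right tool. First, the count: $2^{|E|}$ signatures on an $n$-vertex connected $\Delta$-regular graph collapse under resigning to exactly $2^{m-n+1}=2^{n(\Delta/2-1)+1}$ classes (Proposition \ref{number_non_equivalent}); this deterministic $-n$ correction is exactly what turns $2^{\Delta/2}$ into the stated $2^{\Delta/2-1}$, whereas your estimate $2^{cn\Delta/2}$ with an unspecified $c$ would only yield $2^{c\Delta/2}$. Second, the logic: pairwise non-homomorphism of the family (and hence Lemma \ref{unbal4cycle} and the no-homomorphism lemmas of \cite{Homomorphisms}) is neither needed nor sufficient, since each member could a priori map to its own small target. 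The correct argument is: a vertex map into a fixed simple signed target pulls back at most one resigning class on the fixed underlying graph, so at most $N^n$ classes map to any fixed target of order less than $N$; there are finitely many such targets (a bound independent of $n$), so letting $n\to\infty$ forces $N\geq 2^{\Delta/2-1}$. Note finally that the theorem's lower bound must be read as a statement about the maximum of $\chi_{Hom}$ over signed graphs of maximum degree $\Delta$ --- a balanced bipartite graph has $\chi_{Hom}=2$ for any $\Delta$ --- and your counting formulation is indeed the intended reading.
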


There are further upper bounds for the $h$-chromatic number in terms of homomorphisms to some target graphs which are studied by Ochem, Pinlou and Sen in \cite{Ochem, ochem2017homomorphisms} and by Naserasr, Rollová and Sopena in \cite{HomomorphismCubes}. The latter three authors studied minor closed 
families of signed graphs and achieved the following results on the $h$-chromatic number. 

\begin{theorem}[\cite{Homomorphisms}]
Let $G$ be a $K_4$-minor-free graph. If $[G,\sigma]$ is a signed graph, then $\chi_{Hom}([G,\sigma]) \leq 5$, and this bound is tight. 
\end{theorem}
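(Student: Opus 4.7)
The plan is to combine the fact that $K_4$-minor-free graphs are 2-degenerate with a carefully chosen 5-vertex target signed graph $[T,\tau]$ that bounds the whole class, together with a separate construction witnessing tightness.

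First, I would fix a signed graph $[T,\tau]$ on five vertices enjoying the following \emph{extension property}: for every pair of distinct vertices $t_1,t_2\in V(T)$ and every pair of signs $s_1,s_2\in\{+,-\}$ such that $t_1,t_2$ are joined in some representative of $[T,\tau]$ by an edge of sign $s_1s_2$, there exists a vertex $t\in V(T)\setminus\{t_1,t_2\}$ joined to $t_i$ by an edge of sign $s_i$ in that same representative; moreover each vertex of $T$ should have both a positive and a negative neighbor. A natural candidate is the signed complete graph $[K_5,\tau]$ in which $N_\tau$ is a Hamiltonian $5$-cycle (the signed Paley-type graph on five vertices); its vertex- and edge-transitivity reduces the verification of the extension property to a small finite check.

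Second, the upper bound is then proved by induction on $|V(G)|$. The base case is trivial. For the induction step, pick a vertex $v$ of degree at most two, which exists because $G$ is $K_4$-minor-free and hence $2$-degenerate. Form $(G',\sigma')$ from $(G,\sigma)$ by deleting $v$ and, if $v$ has exactly two non-adjacent neighbors $u_1,u_2$, adding the edge $u_1u_2$ with sign $\sigma(vu_1)\sigma(vu_2)$. Since $G'$ is a minor of $G$ it remains $K_4$-minor-free, so by the induction hypothesis $[G',\sigma']$ admits a homomorphism $\phi$ to $[T,\tau]$. In the degree-$2$ case the (possibly newly added) edge $u_1u_2$ forces $\phi(u_1)\neq\phi(u_2)$, so the extension property furnishes an image for $v$; the degree-$0$ and degree-$1$ cases are straightforward variants.

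Third, for tightness I would exhibit a $K_4$-minor-free signed graph $[G_0,\sigma_0]$ with $\chi_{Hom}([G_0,\sigma_0])=5$. By Lemma \ref{unbal4cycle}, it suffices to produce a $K_4$-minor-free signed S-clique on five vertices, that is, a signed graph on five vertices in which every non-adjacent pair lies on an unbalanced $4$-cycle. A natural candidate is a suitably signed $K_{2,3}$ (which is series-parallel and hence $K_4$-minor-free) or a similar small series-parallel signed graph, with the signature chosen so that every required $4$-cycle is unbalanced; verifying this is a finite check. The main obstacle is really the verification of the extension property of $[T,\tau]$: with only five vertices the combinatorial constraints are tight, and one has to check that each vertex pair, each compatible sign pattern, and each required resigning all fit together consistently. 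Once $[T,\tau]$ is pinned down the induction is routine, and the tightness example reduces to a small combinatorial exercise by Lemma \ref{unbal4cycle}.
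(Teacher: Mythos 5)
The upper-bound half of your plan is essentially sound, but your extension property is mis-stated in a way that breaks the induction in one case. You only require an extension for sign pairs $s_1,s_2$ whose product $s_1s_2$ equals the sign of the edge $t_1t_2$ in the chosen representative; such an extension always closes a \emph{balanced} triangle through $t_1t_2$. However, when the deleted degree-2 vertex $v$ has \emph{adjacent} neighbors $u_1,u_2$ and the triangle $vu_1u_2$ is unbalanced, the images must span an unbalanced triangle, i.e.\ you need $s_1s_2=-\tau'(\phi(u_1)\phi(u_2))$, which your hypothesis excludes; and no resigning can repair this, since triangle balance is invariant under resigning. The correct requirement on the target is that every edge lie on triangles of \emph{both} balances (the extra freedom of flipping $(s_1,s_2)$ to $(-s_1,-s_2)$ by resigning at $v$ means only the product matters). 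Your candidate $(K_5,\tau)$ with $N_\tau$ a pentagon does satisfy this stronger property — e.g.\ the negative edge $01$ lies in the balanced triangle $012$ and the unbalanced triangle $013$, and symmetry handles the rest — so the induction can be repaired with the same target.

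The tightness half has a genuine gap: your proposed witness cannot exist. A signed $K_{2,3}$ is never an S-clique: writing $x_i=\sigma(a_1b_i)\sigma(a_2b_i)$, the unique $4$-cycle through the non-adjacent pair $b_i,b_j$ is unbalanced iff $x_ix_j=-1$, and $x_1x_2=x_1x_3=x_2x_3=-1$ is impossible (multiplying any two of these equations forces the third product to be $+1$). Worse, \emph{no} $K_4$-minor-free graph on $5$ vertices can be an S-clique: by Lemma \ref{unbal4cycle} a non-adjacent pair must be antipodal on a $4$-cycle, hence must have at least two common neighbors, and among $K_4$-minor-free graphs on $5$ vertices (at most $2n-3=7$ edges) the only graphs in which every non-adjacent pair has two common neighbors are $K_{2,3}$ and the book obtained from it by adding the edge $a_1a_2$ (adding an edge inside the $3$-side already creates a $K_4$ minor) — and both are killed by the same parity obstruction. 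So the lower bound cannot be certified by an order-$5$ S-clique inside the class; in \cite{Homomorphisms} (the survey itself states the theorem without proof) it is instead established by exhibiting a larger $K_4$-minor-free signed graph that admits no homomorphism to \emph{any} signed graph on four vertices, which is a weaker and genuinely different kind of witness than an S-clique. As written, your proof establishes (after the fix above) only the upper bound $\chi_{Hom}([G,\sigma])\leq 5$, not tightness.
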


This theorem implies that in particular the $h$-chromatic number of an outerplanar signed graph $[G,\sigma]$ is at most 5. It is
shown in \cite{Homomorphisms} that the bound is tight for this class as well.

It is proved in \cite{Coexter}, that every $m$-edge-colored graph whose underlying graph has an acyclic chromatic number of at most $k$ admits a homomorphism to an $m$-edge-colored graph of order at most $km^{k-1}$. This result is generalized to colored mixed graphs in \cite{ColoredMixed} and there is a version of this in \cite{Homomorphisms} for the case of signed graphs as well. Thus, 
giving a bound for the class of signed graphs whose underlying graphs are acyclically $k$-colorable.

\begin{theorem} [{\cite{Ochem}}] \label{acyclically}
Let $[G,\sigma]$ be a signed graph. If $G$ is acyclically $k$-colorable, then $\chi_{Hom}([G,\sigma]) \leq k 2^{k-2}$.
\end{theorem}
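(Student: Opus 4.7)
My plan is to construct explicitly a signed target graph $(T,\sigma_T)$ on $k\cdot 2^{k-2}$ vertices that bounds every signed graph $[G,\sigma]$ whose underlying graph is acyclically $k$-colorable; the desired inequality will then follow immediately from the definition of $\chi_{Hom}$.

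First, I would fix an acyclic $k$-coloring $c\colon V(G)\to[k]$. For distinct $i,j\in[k]$ the subgraph $G_{ij}:=G[c^{-1}(i)\cup c^{-1}(j)]$ is a forest, so by Proposition~\ref{number_non_equivalent} it carries only one signature class. Hence there exists a labelling $\tau_{ij}\colon V(G_{ij})\to\{\pm1\}$ with $\tau_{ij}(u)\tau_{ij}(v)\sigma(uv)=+1$ for every $uv\in E(G_{ij})$. To each vertex $v\in c^{-1}(i)$ I would then attach the \emph{profile} $f_v\colon[k]\setminus\{i\}\to\{\pm1\}$ defined by $f_v(j):=\tau_{ij}(v)$; the key identity $\sigma(uv)=f_u(c(v))\,f_v(c(u))$ for every $uv\in E(G)$ is then immediate.

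Next, I take $V(T):=\{(i,[f]):i\in[k],\ [f]=\{f,-f\},\ f\colon[k]\setminus\{i\}\to\{\pm1\}\}$, a set of size $k\cdot 2^{k-2}$, pick a representative $f^\ast$ of each class, and join $(i,[f])$ to $(j,[g])$ by a single edge of sign $f^\ast(j)g^\ast(i)$ whenever $i\ne j$. Selecting $-f^\ast$ instead of $f^\ast$ at one vertex flips the signs on all its incident edges, which is precisely a resigning of $T$ at that vertex; consequently $[T,\sigma_T]$ is independent of the choices. To finish, set $\phi(v):=(c(v),[f_v])$ and write $f_v=\varepsilon_v f_v^\ast$ with $\varepsilon_v\in\{\pm1\}$. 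Substituting into the key identity yields $\sigma(uv)=\varepsilon_u\varepsilon_v\,\sigma_T(\phi(u)\phi(v))$ on every edge. Resigning $(G,\sigma)$ at every vertex $v$ with $\varepsilon_v=-1$ produces an equivalent representative $(G,\sigma')$ with $\sigma'(uv)=\sigma_T(\phi(u)\phi(v))$, turning $\phi$ into a genuine homomorphism $(G,\sigma')\to(T,\sigma_T)$; since $[G,\sigma]=[G,\sigma']$, the desired inequality follows.

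The main obstacle will be the factor-of-two saving embodied by the quotient $[f]=\{f,-f\}$: without it the construction would give the straightforward target on $k\cdot 2^{k-1}$ vertices, which is just the specialization to $m=2$ of the $m$-edge-colored bound mentioned just before the theorem. The $\pm1$ ambiguity introduced by the collapse has to be absorbed by a compensating vertex-by-vertex resigning of $G$, and the fact that the scalar $\varepsilon_u\varepsilon_v$ decorating every edge is exactly what such a resigning produces is what makes the compensation consistent. Everything else reduces to balance of forests (Proposition~\ref{number_non_equivalent}) and routine sign arithmetic.
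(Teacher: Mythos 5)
Your proof is correct and is essentially the argument behind the cited result: the theorem is attributed to Ochem and Pinlou, whose proof is exactly the Alon--Marshall target-graph technique the survey alludes to just before the statement (target vertices are pairs consisting of a color class $i$ and a sign profile recorded over the balanced signed forests $G_{ij}$), with the improvement from the $m=2$ edge-colored bound $k2^{k-1}$ to $k2^{k-2}$ obtained precisely as in your quotient $[f]=\{f,-f\}$, absorbing the global sign ambiguity by a resigning of the domain graph. All steps check out, including the reduction of the key identity $\sigma(uv)=f_u(c(v))f_v(c(u))$ to balance of signed forests and the observation that changing a representative $f^\ast$ to $-f^\ast$ is a resigning of the target, so nothing is missing.
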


Ochem and Pinlou note that the bound of Theorem \ref{acyclically} is tight, 
which is shown in \cite{Fabila}. 
Theorem \ref{acyclically} can be used to give a more general rule regarding the upper bounds of the $h$-chromatic number of some classes of signed graphs. Recall that a $k$-tree is a graph that can be constructed from the complete graph $K_k$ by repeatedly adding vertices in such a way that each added vertex is joined to $k$ vertices that already form a $k$-clique. A subgraph of a $k$-tree is called a partial $k$-tree. All $k$-trees are acyclically $(k+1)$-colorable and thus,
following the last theorem we get:

\begin{corollary}
Let $G$ be a partial $k$-tree and let $\sigma$ be any signature on $G$. 
Then $\chi_{Hom}([G,\sigma]) \leq (k+1)2^{k-1}$.
\end{corollary}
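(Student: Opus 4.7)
The plan is to reduce the statement directly to Theorem \ref{acyclically} by verifying that every partial $k$-tree is acyclically $(k+1)$-colorable, and then substituting $k+1$ into the bound $k' \cdot 2^{k'-2}$.

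First, I would establish the acyclic colorability of $k$-trees themselves by induction on the number of vertices using their recursive construction. The base case is $K_k$, which is trivially acyclically $k$-colorable (and hence acyclically $(k+1)$-colorable) since it has no cycle of length more than $3$ that could cause trouble beyond what a proper coloring already avoids; more to the point, with $k+1$ colors available the bound holds vacuously. For the inductive step, suppose $T'$ is a $k$-tree obtained from a $k$-tree $T$ by adding a vertex $v$ adjacent to an existing $k$-clique $Q=\{u_1,\dots,u_k\}$, and assume $T$ is acyclically $(k+1)$-colored by $c$. Since $Q$ is a $k$-clique, $c(u_1),\dots,c(u_k)$ are pairwise distinct, so there is a unique color $\alpha\in\{1,\dots,k+1\}\setminus\{c(u_1),\dots,c(u_k)\}$; extend $c$ by setting $c(v)=\alpha$. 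Properness is immediate; the key point to check is that no bichromatic cycle is created. Any cycle through $v$ must enter and leave $v$ via two distinct neighbors $u_i,u_j\in Q$, and since $c(u_i)\neq c(u_j)$ and both differ from $c(v)=\alpha$, such a cycle uses at least three colors. Hence the extended coloring remains acyclic.

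Next, I would invoke the trivial but crucial fact that acyclic colorability is monotone under taking subgraphs: if $G$ is a subgraph of an acyclically $(k+1)$-colored graph $T$, the restriction of the coloring is still proper and still free of bichromatic cycles (a bichromatic cycle in $G$ would be one in $T$ too). Since by definition a partial $k$-tree is a subgraph of some $k$-tree, we conclude that every partial $k$-tree is acyclically $(k+1)$-colorable.

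Finally, I would apply Theorem \ref{acyclically} with the parameter instantiated at $k+1$ instead of $k$: for any signature $\sigma$ on $G$,
\begin{equation*}
\chi_{Hom}([G,\sigma]) \leq (k+1)\cdot 2^{(k+1)-2} = (k+1)\,2^{k-1},
\end{equation*}
which is exactly the bound claimed.

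The only non-routine step is the acyclic colorability of $k$-trees, and even there the main obstacle is simply the verification that the greedy extension at a simplicial vertex of the $k$-tree construction cannot close a two-colored cycle; this reduces to the observation that all $k$ neighbors of the new vertex lie in a clique and hence receive $k$ distinct colors, which forces any cycle through the new vertex to carry at least three colors. Once that is noted, the remainder is bookkeeping.
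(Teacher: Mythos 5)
Your proposal is correct and follows the same route as the paper, which derives the corollary by combining Theorem \ref{acyclically} (with parameter $k+1$) with the fact that all $k$-trees are acyclically $(k+1)$-colorable. The only difference is that the paper simply asserts this colorability fact, whereas you supply a correct inductive proof of it (distinct colors on the simplicial vertex's clique neighborhood force any cycle through the new vertex to carry three colors) together with the routine restriction-to-subgraphs step.
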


In \cite{SteinerTrees}, it is proved that $K_4$-minor-free graphs are exactly partial 2-trees and for these the above formula only gives an upper bound of 6 instead of the earlier mentioned bound of 5, so its bounds are generally not tight. 
There is also an upper bound on the $h$-chromatic number of the class of planar signed graphs that can be obtained by using the bound on the acyclic chromatic number of planar graphs and techniques similar to the ones applied in \cite{GoodColorings} and \cite{Coexter}, equivalently from Theorem \ref{acyclically} and the fact that every planar graph is acyclically 5-colorable (see \cite{AcyclicColorings}). 
The following theorem seems to be proved parallel in \cite{Homomorphisms} and \cite{Ochem}. In \cite{Homomorphisms} it
is proved for 48 instead of 40, but as remarked in \cite{Homomorphisms} using Theorem \ref{acyclically} yields the following 
the statement.

\begin{theorem} [\cite{Homomorphisms, Ochem}] \label{planar_Naser_Ochem}
Let $[G,\sigma]$ be a planar signed graph. Then $\chi_{Hom}([G,\sigma]) \leq 40$. Also, there are a planar S-clique of order 8 and a planar signed graph with $h$-chromatic number 10.
\end{theorem}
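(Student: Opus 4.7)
The plan is to break the theorem into its three assertions and attack each separately, with the upper bound being essentially a direct combination of two earlier results and the two extremal constructions requiring explicit examples.

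For the upper bound $\chi_{Hom}([G,\sigma]) \leq 40$, the strategy is immediate from the excerpt: invoke Borodin's theorem (cited as \cite{AcyclicColorings}) that every planar graph is acyclically $5$-colorable, and then feed this into Theorem \ref{acyclically}, which tells us that whenever $G$ is acyclically $k$-colorable, $\chi_{Hom}([G,\sigma]) \leq k 2^{k-2}$. Plugging in $k = 5$ yields
\begin{displaymath}
\chi_{Hom}([G,\sigma]) \leq 5 \cdot 2^{3} = 40,
\end{displaymath}
as required. No further work is needed for this part.

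For the existence of a planar S-clique of order $8$, the plan is to exhibit an explicit $8$-vertex planar signed graph $[G,\sigma]$ and verify S-cliqueness via Lemma \ref{unbal4cycle}: namely, that every pair of non-adjacent vertices lies on an unbalanced $4$-cycle. I would begin with a dense planar graph on $8$ vertices whose non-edges are few and easy to cover (for instance, a triangulated planar graph from which only a small number of edges are missing), and then choose a signature $\sigma$ so that each missing pair $\{u,v\}$ is the diagonal of some unbalanced $4$-cycle $u x v y$. Planarity and the covering condition pull in opposite directions, so the chief difficulty is balancing them: one must arrange $\sigma$ so that, for each non-adjacent pair, the $4$-cycle through them contains an odd number of negative edges, while simultaneously keeping the underlying graph planar. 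Once the configuration is found, verification is mechanical.

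For the lower bound construction of a planar signed graph with $\chi_{Hom}([G,\sigma]) = 10$, the plan is to build $(G,\sigma)$ from several pairwise intertwined S-clique-like gadgets that force many mutually distinct color classes under any homomorphism, while preserving planarity. The natural approach is: start with an $8$-vertex planar S-clique from the previous part (forcing at least $8$ colors), and then attach additional planar gadgets whose vertices cannot be identified with any of the existing ones because of parity/sign incompatibilities detected by ``no-homomorphism'' arguments in the style of \cite{Homomorphisms}. One then has to rule out homomorphisms into every signed graph of order $9$; this is typically done by carefully enumerating the possibilities and showing in each case that some unbalanced $4$-cycle is violated.

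The main obstacle is the third part. The upper bound is purely a plug-in and the S-clique construction is a combinatorial design problem that yields to trial and error on small graphs; but the sharpness statement $\chi_{Hom} = 10$ requires both a clever planar gadget and a case-analysis-free argument excluding every target on $9$ vertices, and it is here that one draws most heavily on the structural machinery of signed homomorphisms developed in \cite{Homomorphisms, Ochem}.
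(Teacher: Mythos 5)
Your first part is exactly the paper's own route: the survey derives the bound $\chi_{Hom}([G,\sigma]) \leq 40$ precisely by combining Theorem \ref{acyclically} with the fact that every planar graph is acyclically $5$-colorable \cite{AcyclicColorings}, giving $5 \cdot 2^{5-2} = 40$ (and it notes that \cite{Homomorphisms} originally obtained $48$ by a weaker route). So that part is correct and needs nothing more.

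The genuine gap is in the two existence claims, where your proposal never leaves the planning stage. The statement asserts that a planar S-clique of order $8$ and a planar signed graph with $\chi_{Hom} = 10$ \emph{exist}; a proof must therefore exhibit concrete signed graphs and verify them, and this is exactly what you defer. For the order-$8$ S-clique you correctly identify Lemma \ref{unbal4cycle} as the verification tool (the paper itself notes that the proof in \cite{Homomorphisms} ``heavily relies'' on it), but ``trial and error on small graphs'' and ``once the configuration is found, verification is mechanical'' are placeholders for the actual construction, which is the whole content of the claim — finding a planar graph on $8$ vertices with a signature covering every non-adjacent pair by an unbalanced $4$-cycle is nontrivial, since $8$-vertex planar graphs have at least $\binom{8}{2}-18 = 10$ non-edges to cover. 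For the $\chi_{Hom}=10$ example the gap is worse: your sketch of ``attaching gadgets'' to the $8$-vertex S-clique and excluding all order-$9$ targets is not carried out, and your suggested exclusion mechanism (``showing in each case that some unbalanced $4$-cycle is violated'') does not obviously work, because the candidate targets are arbitrary signed graphs on $9$ vertices, not S-cliques, so one cannot reduce to unbalanced-$4$-cycle conditions alone. Finally, a small mischaracterization: you call $\chi_{Hom}=10$ a ``sharpness statement,'' but the theorem makes no sharpness claim — it exhibits a lower-bound example, leaving the gap between $10$ and $40$ open, so there is no need (and no warrant) to frame the construction as matching the upper bound.
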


Let $\mathcal{P}_g$ ($\mathcal{O}_g$) denote the class of planar (outerplanar) 
signed graphs with girth at least $g$. 

\begin{theorem} [{\cite{Ochem}}]
Let $[G,\sigma]$ be a planar signed graph.
\renewcommand{\labelenumi}{(\roman{enumi})}
\begin{enumerate} 
\item If $[G,\sigma] \in \mathcal{O}_4$, then $\chi_{Hom}([G,\sigma]) \leq 4$
\item If $[G,\sigma] \in \mathcal{P}_4$, then $\chi_{Hom}([G,\sigma]) \leq 25$
\item If $[G,\sigma] \in \mathcal{P}_5$, then $\chi_{Hom}([G,\sigma]) \leq 10$
\item If $[G,\sigma] \in \mathcal{P}_6$, then $\chi_{Hom}([G,\sigma]) \leq 6$
\end{enumerate}
\end{theorem}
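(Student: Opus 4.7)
The overall strategy in each part is to exhibit a ``universal'' target signed graph $T$ of the claimed order and prove that every signed graph $[G,\sigma]$ in the stated class admits a homomorphism to $T$. Once a good target is fixed, the homomorphism is built inductively by a reducibility/extension argument: one identifies a low-degree vertex or other sparse configuration (guaranteed by the girth assumption via Euler's formula), removes it, colors the rest of $[G,\sigma]$ by induction, and extends the coloring using the automorphism-richness of $T$.

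Part (iv) fits exactly the framework of Theorem~\ref{acyclically}. Planar graphs of girth at least $6$ satisfy $|E(G)| \leq \tfrac{3}{2}(|V(G)|-2)$, hence are $2$-degenerate, and a result of Borodin--Kostochka--Woodall gives that they are acyclically $3$-colorable. Plugging $k=3$ into Theorem~\ref{acyclically} yields $\chi_{Hom}([G,\sigma]) \leq 3 \cdot 2^{3-2} = 6$, which is the desired bound. No bespoke target is needed here.

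For parts (i)--(iii) the crude application of Theorem~\ref{acyclically} is too weak (for instance, the best known acyclic $4$-coloring of girth-$5$ planar graphs would only give $16$, not $10$), so a tailored target signed graph is required. For (i), take a signed graph $T_4$ of order $4$ (for example an unbalanced signed $K_4$) and exploit the $2$-degeneracy of outerplanar graphs of girth $\geq 4$: a partial coloring extends at every vertex of degree $\leq 2$ provided the target admits enough compatible images, which one verifies by a short case analysis on the signatures of the two incident edges. For (iii), the number $10$ points to the signed Petersen graph (equivalently, a signed projective-cube construction), whose vertex- and sign-transitivity provides enough extension freedom under the sparsity forced by girth $\geq 5$. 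For (ii), the number $25$ is characteristic of a Paley-type construction over $\mathbb{F}_{25}$: declare positive edges between pairs differing by a nonzero square and negative edges between pairs differing by a non-square, and use the strong algebraic symmetry of $\mathbb{F}_{25}$ to extend homomorphisms.

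The hard part in each of (i)--(iii) is the reducibility analysis: one must identify a finite list of forbidden configurations (typically low-degree vertices, short faces containing chains of $2$-vertices, or specific degree/girth combinations around short cycles) and then close a discharging argument using Euler's formula and the girth lower bound to conclude that no minimum counterexample can exist. Part (ii) is by far the most delicate: the list of reducible configurations and the signed-neighborhood extension analysis on the Paley-type target on $\mathbb{F}_{25}$ must be orchestrated simultaneously, and it is precisely the algebraic symmetry (the action of the multiplicative group of squares as automorphisms preserving signs, together with additive translations) that keeps the extension step combinatorially tractable.
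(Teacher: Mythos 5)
You should first note that the survey itself contains no proof of this theorem: it is quoted from Ochem and Pinlou \cite{Ochem}, so the only meaningful comparison is with the method of that cited work. In spirit your outline matches it --- fixed, highly symmetric signed targets plus a reducible-configuration/discharging argument --- and your Paley guess for (ii) is even the right kind of object (the order-25 target is essentially the signed Paley graph on $\mathbb{F}_{25}$). But what you have written is a plan, not a proof: for (i)--(iii) every step that carries mathematical weight is deferred. You never verify the extension property of any target (that every small set of vertices, under every prescribed pattern of edge signs towards it, has suitable common neighbours), you exhibit no list of reducible configurations, and you do not close any discharging. Worse, two of the three target guesses are refutably wrong as stated. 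For (iii), the Petersen graph cannot work with your greedy extension: extending a vertex whose two neighbours are already coloured requires a common neighbour of the two images realizing one of four sign patterns, but in the Petersen graph an adjacent pair has no common neighbour and a nonadjacent pair has exactly one, so the extension fails generically; the actual order-10 target in \cite{Ochem} is an antitwinned (doubled) Paley-type graph whose underlying graph is $K_{10}$ minus a perfect matching. Similarly for (i): in any target of order 4 a pair of vertices has at most two common neighbours, hence realizes at most two of the four sign patterns needed at a degree-2 vertex, so ``2-degeneracy plus a short case analysis on the two incident edges'' provably cannot drive the induction; this is precisely why the genuine proof needs larger reducible configurations rather than a minimum-degree greedy step.

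The one part you claim to complete, (iv), is also broken. Borodin, Kostochka and Woodall prove acyclic $3$-colourability for planar graphs of girth at least $7$, not $6$; for girth $6$ only acyclic $4$-colourability (via their girth-$5$ result) is available, and then Theorem \ref{acyclically} yields $4\cdot 2^{2}=16$, not $6$. Your $2$-degeneracy remark does not repair this: degeneracy controls the ordinary (list) chromatic number, not the acyclic chromatic number, and the computation needs an acyclic $3$-colouring specifically. This failure is exactly why $\mathcal{P}_6$ is handled in \cite{Ochem} by the same bespoke target-plus-discharging machinery (with an explicit order-$6$ target) instead of through acyclic colourings. In summary: the high-level strategy is the correct one, but no part of the theorem is actually proved --- (iv) rests on a misquoted girth threshold, and in (i)--(iii) the target verification and discharging that you defer constitute the entire content of the result.
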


In \cite{ochem2017homomorphisms}, the authors consider 2-edge-colored graphs instead of signed
graphs and deduce the same results in this context. In \cite{naserasr2019} the relation between these two
approaches are studied in the context of (signed) graph homomorphisms. Further results on
the $h$-chromatic number of grids are obtained in \cite{DYBIZBANSKI2020105918}.

An interesting fact is that if the maximum $h$-chromatic number of any planar signed graph were $k$, then this would imply the existence of a signed graph of order $k$ to which every planar signed graph admits a homomorphism. There are important results for other properties of planar signed graphs as well. For example the maximum order of a planar S-clique and therefore, the maximum of the absolute S-clique number of planar graphs.
 \begin{theorem} [\cite{Homomorphisms}]
The maximum order of a planar S-clique is 8.
\end{theorem}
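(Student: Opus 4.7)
Let $[G,\sigma]$ be a planar S-clique on $n$ vertices; the goal is to show $n\leq 8$ and to supply an explicit 8-vertex example for tightness. The approach is to combine Lemma~\ref{unbal4cycle} with Euler's formula. By Lemma~\ref{unbal4cycle}, every non-adjacent pair $\{u,v\}\subseteq V(G)$ appears as opposite vertices of an unbalanced 4-cycle; equivalently, $u$ and $v$ share at least two common neighbors $w_1,w_2$ for which the paths $uw_1v$ and $uw_2v$ carry opposite sign products. Planarity forces $|E(G)|\leq 3n-6$, so the number of non-adjacent pairs is at least $\binom{n}{2}-(3n-6)=\frac{(n-3)(n-4)}{2}$.

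The first step is a double counting. I would record, for each non-adjacent pair, a witnessing unbalanced 4-cycle. Since a 4-cycle contains exactly two pairs of opposite vertices, each unbalanced 4-cycle can witness at most two non-edges. It follows that $(G,\sigma)$ must contain at least $\frac{(n-3)(n-4)}{4}$ distinct unbalanced 4-cycles, a quadratic function of $n$.

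The second step is an upper bound on the number of unbalanced 4-cycles compatible with planarity. Here I would exploit that any S-clique with at least three vertices is 2-connected, by the corollary following Lemma~\ref{unbal4cycle}, and charge each 4-cycle to its plane embedding. The sign constraint provides additional information: at each common neighbor of an unbalanced 4-cycle through $u,v$, the two incident edges must split appropriately between $+1$ and $-1$, restricting the local arrangement around vertices of large degree. A careful counting, most likely via a discharging scheme that distinguishes balanced from unbalanced 4-cycles, should cut the count of unbalanced 4-cycles down to a linear function of $n$. Comparing with the quadratic lower bound from the first step then yields $n\leq 8$. For tightness, I would exhibit a concrete planar S-clique on 8 vertices, obtained by placing a suitable signature on a well-chosen 2-connected planar graph so that each of its few non-edges is supplied with an unbalanced 4-cycle through it.

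The main obstacle is the second step. The total number of 4-cycles in a planar graph can already be quadratic in $n$, as shown by $K_{2,n-2}$, so no purely topological count suffices; the sign condition must be invoked in an essential way. The hard part is thus to show that the subfamily of \emph{unbalanced} 4-cycles, together with the requirement that these cover every non-edge diagonally, is too restrictive to coexist with a planar embedding once $n\geq 9$.
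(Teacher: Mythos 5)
Your starting point is the right one: the paper itself gives no proof (it is a survey), but it does record that the proof in the cited source ``heavily relies on Lemma~\ref{unbal4cycle}'', and your first step is sound --- every non-edge must be a diagonal of an unbalanced $4$-cycle, each unbalanced $4$-cycle covers at most two non-edges, and planarity forces at least $\binom{n}{2}-(3n-6)$ non-edges. The gap is in your second step, and it is fatal in two distinct ways. First, the hoped-for linear bound on the number of unbalanced $4$-cycles in a planar signed graph is simply false: take $K_{2,n-2}$ with parts $\{a,b\}$ and $\{v_1,\dots,v_{n-2}\}$, make every $av_i$ positive, and let $\sigma(bv_i)$ be positive for half the indices and negative for the rest; the cycle $av_ibv_j$ is unbalanced exactly when $\sigma(bv_i)\neq\sigma(bv_j)$, so this planar signed graph has $\Theta(n^2)$ unbalanced $4$-cycles. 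The sign condition alone does not thin the quadratic family of $4$-cycles, so no discharging scheme can deliver the bound you want. Second, even if a linear bound $cn$ did hold, the comparison could not yield $n\leq 8$: at $n=9$ your lower bound requires only $\lceil 15/2\rceil=8$ unbalanced $4$-cycles, a number any moderately dense planar signed graph easily contains, and $\frac{(n-3)(n-4)}{4}>cn$ excludes only much larger $n$ unless $c<1$. A global count of witnessing cycles is therefore too coarse an invariant at the relevant scale.

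What the argument actually needs --- and what replaces your step two --- is a \emph{local} sign obstruction attached to common neighborhoods rather than a global cycle count. If several pairwise non-adjacent vertices $v_1,\dots,v_m$ all use the same two common neighbors $a,b$ as the hubs of their unbalanced $4$-cycles, set $x_i=\sigma(av_i)\sigma(bv_i)\in\{\pm 1\}$; the cycle $av_ibv_j$ is unbalanced iff $x_ix_j=-1$, which can hold for all pairs only when $m\leq 2$, since $x_ix_j=x_jx_k=-1$ forces $x_ix_k=+1$. So the covering demanded by Lemma~\ref{unbal4cycle} must be distributed across many distinct hub pairs, and it is this interaction between common-neighborhood structure and planarity (small diameter, bounded edge count, forbidden $K_5$/$K_{3,3}$ configurations) that drives the case analysis in the cited proof --- not a quadratic-versus-linear comparison. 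Finally, note that the tightness half of the statement is not free: you must exhibit a concrete planar S-clique of order $8$ (its existence is asserted alongside Theorem~\ref{planar_Naser_Ochem}), and your proposal leaves both the graph and its signature unspecified.
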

The proof of this theorem heavily relies on Lemma \ref{unbal4cycle}, indicating it as a useful tool regarding S-cliques.
Furthermore, the relations between graphs and signed bipartite graphs can be used to restate Hadwiger's conjecture and in the following, leads to possibilities of a strengthening of Hadwiger's conjecture for the class of even signed graphs. This topic is extensively
studied in \cite{Homomorphisms}.

\section{Final remarks and some conjectures}

A definition of a ``chromatic number'' for signed graphs strongly depends on properties of the colors, 
as those of the ``signed colors'' in the definitions 
of Zaslavsky and M\'a\v{c}ajov\'a et al.~or on the permutations which are associated to the edges as in the case of 
generalized signed graphs. 
Since every element of an additive abelian group has an inverse element, the condition $c(v) \not= \sigma(vw) c(w)$ is
equal to the condition $c(v) \not = c(w)$ if the color $c(v)$ is self-inverse; i.e.~$\sigma$ is the identity on $c(v)$. 
The self-inverse elements play a crucial role in such colorings, since the color classes which are induced by these elements are independent sets. Hence, the following statement is true.

\begin{proposition} Let $G$ be a  graph and $\chi(G) = k$. If ${\cal C}$ is a set of $k$ pairwise different self-inverse 
colors (e.g.~of $\mathbb{Z}_2^n$ $(k \leq 2^n)$), then every $k$-coloring of $G$ with colors from ${\cal C}$  is a $k$-coloring of $(G,\sigma)$, for every signature $\sigma$ of $G$. In particular, the chromatic number of $(G,\sigma)$ with respect to colorings with colors of ${\cal C}$ is equal to the chromatic number of $G$ for every signature $\sigma$. 
\end{proposition}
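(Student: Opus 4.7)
The plan is to simply unwind the definition of a $k$-coloring of a signed graph in the abelian group setting the paragraph refers to, and observe that the self-inverse hypothesis collapses the sign condition to the ordinary coloring condition.

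First I would recall the constraint satisfied by a coloring $c$ of $(G,\sigma)$ with values in an additive abelian group, namely $c(v) \neq \sigma(vw)\,c(w)$ for every edge $vw\in E(G)$. For a positive edge this is just $c(v)\neq c(w)$, i.e.~the ordinary graph coloring constraint. The key observation is that if $c(w)\in\mathcal{C}$ and $\mathcal{C}$ consists of self-inverse elements, then $-c(w)=c(w)$; therefore for a negative edge the constraint becomes $c(v)\neq -c(w)=c(w)$, again the ordinary constraint. Hence when colors are drawn from $\mathcal{C}$, the signed constraint $c(v)\neq \sigma(vw)\,c(w)$ is equivalent to $c(v)\neq c(w)$ for every edge, regardless of $\sigma$.

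From this the first claim is immediate: any mapping $c:V(G)\to\mathcal{C}$ that is a proper $k$-coloring of $G$ satisfies the signed constraint under every signature, so it is a $k$-coloring of $(G,\sigma)$. For the ``in particular'' part, I would argue both inequalities. Since $|\mathcal{C}|=k=\chi(G)$, a proper $k$-coloring of $G$ using the palette $\mathcal{C}$ exists, and by the above it colors $(G,\sigma)$ for every $\sigma$; this gives the upper bound. For the lower bound, any $\mathcal{C}$-coloring of $(G,\sigma)$ must in particular satisfy $c(v)\neq c(w)$ on every edge (again because the colors are self-inverse), hence is a proper coloring of $G$ and uses at least $\chi(G)=k$ colors. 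The example $\mathbb{Z}_2^n$ with $k\leq 2^n$ simply provides a concrete group in which enough self-inverse elements are available.

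There is no real obstacle here; the whole proposition is a one-line observation dressed up in notation. The only mild subtlety is making explicit that ``self-inverse'' means $-\gamma=\gamma$ in the relevant group and that it is precisely this property that renders the action of $\sigma(vw)\in\{\pm1\}$ trivial on each color, so the plan is essentially to state this equivalence clearly and then read off the two inequalities.
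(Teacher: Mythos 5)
Your proposal is correct and matches the paper's own (implicit) argument: the paper justifies the proposition by exactly the same observation, stated in the preceding paragraph, that for a self-inverse color the condition $c(v) \neq \sigma(vw)\,c(w)$ collapses to $c(v) \neq c(w)$, so the sign of the edge is irrelevant. Your explicit spelling out of the two inequalities for the ``in particular'' part is a harmless elaboration of the same one-line idea.
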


A graph $G$ together with a function $f : V(G) \longrightarrow \{\pm 1 \}$ is a marked graph. This naturally induces a 
signature on $G$, where an edge is positive if its two vertices have the same mark, and it is negative
otherwise. Harary and Kabell \cite{Harary_Kabell} noticed that the signed graph where the signature is obtained 
from a marking of the vertices is balanced.  
This fact implies that the edge-chromatic number of a signed graph $(G,\sigma)$ is equal to edge-chromatic number of
$G$, which is noticed by Schweser and Stiebitz in \cite{choosable}. 

Despite of the early approached of Cartwright and Harary \cite{C_H_1968}, almost all concepts 
of signed graph coloring and their chromatic numbers  
are natural generalizations of the corresponding concepts for graphs. 
The next problem is only of interest for signed graphs.

\begin{problem} \label{P_Complexity} What is the complexity of the following decision problem: 
Let $k$ and $t$ be integers and $(G,\sigma)$ be a signed graph and $\chi(G) = k$.
Is $\chi((G,\sigma)) \leq t$ $(\chi_{\pm}((G,\sigma)) \leq t)$?
\end{problem}

It is easy to figure out the trivial cases, e.g.~if $k \in \{1,2\}$ or $t \geq \min\{\Delta(G), 2k-2\}$.
Brewster et al.~\cite{ComplexHom} proved that it can be decided in polynomial time whether two signed graphs are equivalent.

In \cite{Spectrum} (see Section \ref{spectrum}) it is shown that the chromatic spectrum is an interval of integers for $n$- and for modular $n$-colorings.  
Let $(G,\sigma)$ be an $S$-signed graph and $c_S((G,\sigma))$ be the minimum $k$ such that $(G,\sigma)$ has a $k$-coloring (for this fixed labelling of the arcs). 
A graph $G$ is $S$-$k$-colorable if  
$k \geq \max \{c_S((G,\sigma)) : \sigma \text{ is an } S\text{-signature of } G\}$.

\begin{problem} \label{Probl_S_chromatic_spectrum} 
Let $S$ be an inverse closed set of permutations of $S_k$ and $G$ be $S$-$k$-colorable.
Is it true that the set 
$\{c_S((G,\sigma)) : \sigma \text{ is an } S\text{-signature of } G\}$ is an interval of integers? 
\end{problem}

In the context of coloring planar graphs, the following questions might be of interest. 
Problem \ref{P_order} is formulated for 
$n$-coloring of signed graphs (see Section \ref{MRS}) by Kardo\v{s} and Narboni \cite{Kardos_etal_2019}.

\begin{problem} \label{P_order} Let $S \subseteq S_4$ and $id \in S$. What is the smallest order of a non-$S$-4-colorable planar graph?
\end{problem}

Note that for odd $k$, a signed graph is $k$-colorable if and only if it is modular $k$-colorable. Hence
the following conjecture also applies for modular coloring. 

\begin{conjecture} [\cite{JDY_Zhu_2019_Groetzsch}]
Every triangle-free planar signed graph is 3-colorable. 
\end{conjecture}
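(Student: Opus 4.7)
As stated the conjecture must be read as applying to planar graphs (for arbitrary triangle-free graphs the all-positive signature inherits the unbounded chromatic number of the underlying graph), so I take the target statement to be: every triangle-free planar signed graph $(G,\sigma)$ admits a signed $3$-coloring with color set $\{-1,0,1\}$ satisfying $c(v)\neq \sigma(vw)c(w)$ on each edge. My plan is the classical Gr\"otzsch recipe of minimum counterexample plus discharging, adapted to the signed setting. Suppose $(G,\sigma)$ is a counterexample minimizing $|V(G)|+|E(G)|$. I would first derive the usual structural consequences: $(G,\sigma)$ is $2$-connected and has $\delta(G)\geq 3$, since a degree-$\leq 2$ vertex has at most two forbidden colors among three and can be extended after coloring the rest by minimality. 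By Theorem \ref{equ_classes} we may resign freely, and Proposition \ref{number_non_equivalent} lets us fix a canonical representative of the signature on a spanning tree of whatever local configuration is under analysis.

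The next step is to identify a list of reducible configurations, in analogy with the unsigned Gr\"otzsch proofs: adjacent $4$-faces, long chains of degree-$3$ vertices along a face boundary, and assorted $4$-face/$5$-face neighborhoods. For each candidate one removes or contracts a small piece, applies minimality to color the smaller signed graph, and extends the coloring back. The extension analysis rests on a simple but crucial structural dichotomy in signed $3$-coloring: the color $0$ is genuinely self-inverse, so two adjacent vertices can never both receive $0$ regardless of edge sign, whereas the pair $\{+1,-1\}$ is allowed on positive edges but forbidden on negative ones, and the pairs $\{+1,+1\}$ and $\{-1,-1\}$ are allowed on negative edges but forbidden on positive ones. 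Thus $0$ plays the role of an independent-set color and $\pm 1$ together behave like a two-coloring whose defining edge set depends on the signs; resigning locally normalizes this dependence. With reducibility in hand, I would assign initial charge $d(v)-4$ to vertices and $\ell(f)-4$ to faces; Euler's formula with girth $\geq 4$ gives total charge $-8$. Discharging rules pushing charge from long faces and high-degree vertices to $4$-faces and degree-$3$ vertices, tuned so that the reducible configurations are exactly what the scheme needs to avoid a negative residue, would close the argument by contradiction.

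The main obstacle is the reducibility step itself. In the unsigned proof one uses that a proper $3$-coloring of a short path extends to almost any precoloring of its endpoints; in the signed case this extension depends intricately on the signs along the path and on whether $0$ is among the precolored values. A cleaner route, mirroring Thomassen's inductive proof for girth-$5$ planar graphs, would be to strengthen the target to a list-coloring statement with a precolored facial path, where the lists depend on local sign data, so that the induction absorbs the extension bookkeeping into a single step. The delicate point is to choose the list-size hypothesis both strong enough to induct and invariant under resigning, since without resigning invariance the ability to normalize signatures locally is lost, and that normalization seems essential to the argument.
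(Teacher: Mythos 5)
You have misread the status of this statement: the paper presents it as a \emph{conjecture} of Jiang et al.\ \cite{JDY_Zhu_2019_Groetzsch}, not as a theorem, and it contains no proof for you to be compared against. Indeed, Section \ref{Zhu} records the state of the art precisely: among inverse-closed subsets $S$ of $S_3$, the question of $TFP$-goodness (every triangle-free planar graph is $S$-3-colorable) was settled for every $S$ \emph{except} $\{id,(1,2)\}$, and $S=\{id,(1,2)\}$ is exactly signed $3$-coloring in the sense of this conjecture (for $k=3$ the $n$-coloring of Section \ref{MRS} and the modular coloring of Section \ref{KS} coincide). So your proposal, whatever its merits as a research plan, does not prove the statement, and as written it could not: every load-bearing component is left unspecified. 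You name candidate reducible configurations but verify none of them; you posit discharging rules ``tuned so that'' the final charge count works, without exhibiting or checking them; and you yourself flag that the extension analysis---the signed analogue of precoloring extension along short paths---is unresolved. That step is not bookkeeping to be absorbed later; it is the entire content of the open problem.

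Two concrete failure points in the route you sketch. First, the identification/contraction moves of the classical Gr\"otzsch argument (e.g.\ identifying opposite vertices of a $4$-face) do not transfer cleanly to signed graphs: the identification must be compatible with the signature up to resigning, and it can create unbalanced digons or short unbalanced circuits whose colorings do not pull back to the original graph, so minimality cannot be invoked without a case analysis you have not carried out. Second, your ``cleaner route'' via a Thomassen-style list strengthening is provably hopeless in its natural form: for balanced signed graphs, signed $3$-choosability contains ordinary $3$-choosability, and Voigt constructed triangle-free planar graphs that are not $3$-choosable; this is precisely why Thomassen's inductive method yields Gr\"otzsch-type conclusions only at girth $5$ (compare Theorem \ref{MRS_Steinberg}(ii), which the paper states for signed graphs, and the surrounding discussion of Steinberg-type results). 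Any workable induction hypothesis would have to restrict lists to sublists of one fixed $3$-element color set with a carefully controlled precolored boundary, and finding such a hypothesis that is simultaneously inductive and resigning-invariant is exactly what is missing. Your reading of the statement (restoring the intended planarity hypothesis) is correct, and your dichotomy between the self-inverse color $0$ and the pair $\pm 1$ is sound and consistent with the paper's remarks on self-inverse colors; but the proposal as a whole restates the open problem together with a list of standard tools that are known to be insufficient in the form you deploy them.
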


Theorem \ref{relation_cc} says that the difference between the circular chromatic number and the chromatic number can be 1.  

\begin{problem}
Is it true that every planar signed graph has circular chromatic number at most 4?
\end{problem}

\subsubsection*{Acknowledgement}
We thank Thomas Zaslavsky for very careful reading the manuscript and many very helpful remarks and corrections which improve the paper a lot. 
We also thank two anonymous referees for their valuable comments. One of them pointed out that there is 
a short survey on some aspects of coloring signed graphs by Lynn Takeshita \cite{Lynn}.

\bibliography{Survey_References}{}
\addcontentsline{toc}{section}{References}
\bibliographystyle{abbrv}
\end{document}